\newcounter{bean}
\newcounter{beany}
\renewenvironment{enumerate}{\begin{list}{(\roman{bean})}{\usecounter{bean}\setlength{\rightmargin}{\leftmargin}}}{\end{list}\setcounter{bean}{1}}
\newenvironment{a-enumerate}{\begin{list}{(A-\arabic{beany})}{\usecounter{beany}\setlength{\rightmargin}{\leftmargin}}}{\end{list}}
\newenvironment{b-enumerate}{\begin{list}{(B-\arabic{bean})}{\usecounter{bean}\setlength{\rightmargin}{\leftmargin}}}{\end{list}\setcounter{bean}{1}}
\newenvironment{d-enumerate}{\begin{list}{(D-\arabic{bean})}{\usecounter{bean}\setlength{\rightmargin}{\leftmargin}}}{\end{list}\setcounter{bean}{1}}
\theoremstyle{plain}%default
\newtheorem{thm}{Theorem}[section]
\newtheorem{lem}[thm]{Lemma}
\newtheorem{prop}[thm]{Proposition}
\newtheorem{cor}[thm]{Corollary}
\theoremstyle{definition}
\newtheorem{defn}[thm]{Definition}
\newtheorem*{claim}{Claim}
\newtheorem{rmk}[thm]{Remark}
\numberwithin{equation}{section}
\newcommand{\cc}{\ensuremath{\Subset}}
\newcommand{\del}{\ensuremath{\partial}}
\DeclareMathOperator{\jac}{\ensuremath{\mathrm{Jac}}}
\DeclareMathOperator{\tr}{\ensuremath{\mathrm{tr}}}
\newcommand{\R}{\ensuremath{\mathcal{R}}} %generic renormalisation operator
\newcommand{\RH}{\ensuremath{\mathcal{R}}} %henon-like renormalisation operator
\newcommand{\RR}{\ensuremath{\mathbb{R}}} %real numbers
\newcommand{\CC}{\ensuremath{\mathbb{C}}} %complex numbers
\newcommand{\NN}{\ensuremath{\mathbb{N}}} %natural numbers
\newcommand{\ZZ}{\ensuremath{\mathbb{Z}}} %integer numbers
\newcommand{\ii}{\ensuremath{\mathit{\iota}}}
\newcommand{\bigo}{\ensuremath{\textup{O}}}
\renewcommand{\emph}[1]{\textit{#1}}
\newcommand{\word}[2]{\ensuremath{\mathbf{#1}#2}}
\newcommand{\Dom}{\ensuremath{\mathrm{Dom}}}
\newcommand{\id}{\ensuremath{\mathrm{id}}}
\newcommand{\Diffeo}{\ensuremath{\mathrm{Diff}}}
\newcommand{\End}{\ensuremath{\mathrm{End}}}
\newcommand{\e}{\ensuremath{\varepsilon}}
\newcommand{\oo}[2]{\ensuremath{#2^{#1}}}
\newcommand{\Cantor}{\ensuremath{\mathcal O}} %cantor set
\newcommand{\U}{\ensuremath{\mathcal{U}}}
\renewcommand{\H}{\ensuremath{\mathcal{H}}}
\newcommand{\I}{\ensuremath{\mathcal I}} %space of infinitely renormalisable henon-like maps
\newcommand{\Omegax}{\ensuremath{\Omega_x}}
\newcommand{\MT}{\ensuremath{\Psi}}
\newcommand{\AAA}{\ensuremath{\textup{\textbf{\textsf{A}}}}}
\newcommand{\BBB}{\ensuremath{\textup{\textbf{\textsf{B}}}}}
\newcommand{\DDD}{\ensuremath{\textup{\textbf{\textsf{D}}}}}
\newcommand{\UUU}{\ensuremath{\textup{\textbf{\textsf{U}}}}}
\newcommand{\VVV}{\ensuremath{\textup{\textbf{\textsf{V}}}}}
\newcommand{\WWW}{\ensuremath{\textup{\textbf{\textsf{W}}}}}
\newcommand{\boldkappa}{\ensuremath{\boldsymbol{\kappa}}}
\title{Infinitely Many Moduli of Stability at the Dissipative Boundary of Chaos}
\author{P. Hazard, M. Martens, and C. Tresser}
\date{\today}
\begin{document}

%\pagestyle{empty}
%\newpage

%\noindent
%Dear Charles and Marco,
%\vskip 10pt 
%Attached is the latest draft. 
%Please take a look and let me know your thoughts.

%I did not make any changes to the introduction, except for a few typos.
%However, did the following:
%\begin{enumerate}
%\item added the Lemma on stable eigenvalues varying regularly with the average Jacobian, 
%\item combined the section on the Palis invariant with the section on Saddle connections and restricted these to the Henon-like case.
%\item removed the notion of markings 
%\item generally tidied up
%\end{enumerate}
%I tried but was unable to reduce the number of pages.
%However, if you think any of the new content is ``trivial'' please remove it.

%\vskip 70pt

%{\flushright --PeH, \today.--}
%\flushleft
%\newpage

%\newpage
%\pagestyle{plain}
%\setcounter{page}{1}

\maketitle
%\footnote{\notag\small
%{\it 2010 Mathematics Subject Classification.} Primary 37Exx, 37C15, 37B40 %, 37F25

%{\it Keywords and phrases.} H\'enon map, Boundary of chaos, Renormalisation.
%}

\begin{abstract}
In the family of area-contracting H\'enon-like maps 
with zero topological entropy we show that there are 
maps with infinitely many moduli of stability. 
Thus one cannot find all the possible topological 
types for non-chaotic area-contracting H\'enon-like 
maps in a family with finitely many parameters. 
A similar result, but for the chaotic maps in the family, 
became part of the folklore a short time after H\'enon 
used such maps to produce what was soon conjectured 
to be the first non-hyperbolic strange attractor in $\RR^2$.  
Our proof uses recent results about infinitely renormalisable 
area-contracting H\'enon-like maps; 
it suggests that the number of parameters needed to 
represent all possible topological types for area-contracting 
H\'enon-like maps whose sets of periods of their periodic orbits are finite (and in particular are equal to $\{1,\, 2,\dots,\,2^{n-1}\}$ or an initial segment of this $n$-tuple) increases with the number of periods. In comparison, among $C^k$-embeddings of the 2-disk with $k\geq 1$, the maximal moduli number for non-chaotic but non area-contracting maps in the interior of the set of zero-entropy is infinite.
\end{abstract}

\section{Introduction}
In this paper we prove the following theorem:
%\verb=name of theorem too long=
\begin{thm}[Zero entropy H\'enon-like maps have infinite modulus]\label{thm:InfPar4Henon}
The area-contracting H{\'enon}-like maps with zero topological entropy form a family of diffeomorphisms with infinitely many moduli. In particular, infinitely many parameters are needed to exhaust all the possible topological types, even if one only considers the non-chaotic part of that family.
\end{thm}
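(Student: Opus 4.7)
The plan is to exhibit infinitely many functionally independent topological-conjugacy invariants inside the infinitely renormalisable locus of zero-entropy area-contracting H\'enon-like maps. Recall that a modulus is a continuous function on the family that is constant on topological-conjugacy classes; producing a sequence $m_1, m_2, \dots$ of such invariants whose values can be prescribed independently then rules out any continuous finite-dimensional parametrisation of all topological types.

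First, I would restrict attention to maps $F$ at the accumulation of the period-doubling cascade --- i.e.\ infinitely renormalisable of period-doubling type --- which lie in the (closure of the) non-chaotic portion of the zero-entropy locus. The cited renormalisation theory for area-contracting H\'enon-like maps supplies, for each such $F$, a Cantor attractor $\mathcal{O}_F$ organised into a hierarchy of $2^n$ pieces at level $n$, together with affinely renormalised copies $R^nF$. Unlike in the one-dimensional Feigenbaum case, the embedded geometry of $\mathcal{O}_F$ and the shape of $R^nF$ carry non-trivial two-dimensional information --- measured, for instance, by ``tip'' displacements or by ratios of gaps between consecutive pieces --- that is not washed out by the universality in the horizontal direction.

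Next, I would single out a specific sequence of geometric ratios $m_n(F)$ (say, width ratios of sister pieces or normalised tip positions at level $n$) and argue that each is a topological-conjugacy invariant of $F$. Any conjugating homeomorphism must send attractor to attractor, respect the combinatorial hierarchy, and, because each $\mathcal{O}_F$ is totally disconnected in $\RR^2$ with its scaling rigidly controlled by the dynamics, must preserve the chosen ratios. Each $m_n$ depends continuously on $F$, and hence is a genuine modulus.

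Finally, I would verify that the $m_n$ are functionally independent by producing, around any base map $F_0$, families of perturbations supported inside the $n$-th renormalisation box which alter $m_n(F_0)$ while leaving coarser scales (and the zero-entropy property) intact. This yields an injection from an infinite-dimensional set of parameter values into the set of topological-conjugacy classes, beyond the reach of any continuous finite-parameter family. I expect the main obstacle to be showing that the selected scaling ratios are genuinely \emph{topological} invariants rather than merely smooth ones; this is exactly where the rigidity results of the recent H\'enon renormalisation theory must do the heavy lifting.
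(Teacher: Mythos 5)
There is a genuine gap, and it sits exactly where you flag it: the claim that metric scaling data of the renormalisation Cantor set (width ratios of sister pieces, normalised tip positions, gap ratios) are invariants of \emph{topological} conjugacy. A conjugating homeomorphism must indeed carry the Cantor set to the Cantor set and respect the combinatorial hierarchy of pieces, but it is free to distort all distances and ratios; nothing in the renormalisation theory you cite upgrades these metric quantities to topological invariants. In fact the relevant literature points the other way: the de Carvalho--Lyubich--Martens result is ``universality but \emph{not} rigidity'', and the one quantity that is known to be a topological invariant --- the average Jacobian $b$ --- is obtained by Lyubich--Martens not from ratio preservation but from a purely combinatorial count $\kappa_{\RH^n F}$ (how deep into the boxing a certain piece of stable manifold reaches relative to the tip), whose normalised limit equals $\tfrac{1}{2}\log b/\log\sigma$. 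So the ``heavy lifting'' you defer is not a technical verification but the missing core of the argument, and even granting it you would face a second obstruction: by the asymptotic formula $F_n(x,y)=(f_n(x)-b^{2^n}a(x)y(1+\bigo(\rho^n)),x)$, the deep-scale geometry is asymptotically governed by the single number $b$, so your proposed level-$n$ ratios are not independently prescribable within the infinitely renormalisable class in any obvious way; localized perturbations inside a level-$n$ box change the renormalisations and hence feed back into exactly the quantities you want to hold fixed.

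The paper avoids both problems by using invariants that are topological for classical reasons: Palis's eigenvalue ratio $P=\log|\lambda^s|/\log|\lambda^u|$ attached to a heteroclinic \emph{tangency} between saddles of periods $2^{m_i}$ and $2^{n_i}$. The renormalisation theory enters only to produce such tangencies at arbitrarily deep, pairwise non-interacting scales (via the topological invariance of $b$ and the intermediate-value argument on $\kappa$), and independence is arranged by interpolating perturbations supported in the disjoint annular regions $T_{N_{i-1},N_i-1}$ between levels of the boxing, so each pair of parameters moves one Palis invariant and one tangency-displacement function while leaving the other markings untouched. Finally, ``infinitely many moduli'' is made rigorous not by an informal injection argument but by a dimension-theoretic theorem (via Hurewicz--Wallman): a $(d+1)$-parameter family on which the product of Palis invariants is a local diffeomorphism, with all tangencies persisting, cannot be absorbed into any $d$-parameter full family. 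If you want to salvage your outline, you would need either to replace your metric ratios by genuinely topological data of this kind, or to prove a rigidity statement far stronger than what the cited theory provides.
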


Here chaos means topological chaos, or in more technical terms, positive topological entropy. Before commencing we will need a few more words to explain this result, 
and why it is important in the theory of dynamical systems. We will not get into the issue of which parts of the boundary of zero entropy, in the set of continuous self-maps on a space, is made of maps with zero entropy, a deep issue that depends on the smoothness class and on dimension (for a review, see, {\it e.g.}, \cite{MilTre}).

In 1978 Jacob Palis~\cite{Palis} discovered a pair of new invariants of topological conjugacy, one for flows and one for diffeomorphisms and embeddings on two-dimensional manifolds.  Assuming $p_u$ and $p_s$ are saddle points (critical points if we consider a flow $\{\phi _t\}$ , periodic points if we consider a diffeomorphism or an embedding $F$) we further assume that there is a tangency (of any contact order -- the manifolds may even coincide) between the unstable manifold $W^u(p_u)$ of $p_u$  and the stable manifold $W^s(p_s)$ of $p_s$. We note it is possible that $p_u=p_s$. 

%Don't see the relevance of this to the rest of the paragraph
%Since we are only interested in a piece of the whole phase portrait, we can assume that we are working in a patch of $\RR ^2$ (or all of $\RR^2$), thus inheriting the Euclidean metric.  

We will only be concerned with diffeomorphisms in this paper. Incidentally, we notice that for flows one gets back to the diffeomorphism case by considering, {\it e.g.}, the time one map $F= \phi _1$ that, in particular, turns the critical points of $\{\phi _t\}$ (as well perhaps as some periodic orbits of the flow) into fixed-points of $F$.

The Palis invariant is the real number
\begin{equation}
P=P_{F:p_u,p_s}=\frac{\log |\lambda|}{\log|\mu|}
\end {equation}
where $\lambda\in (-1,1)$ is the stable eigenvalue of (the linearised maps at) $p_u$ and $\mu\in (-\infty,-1)\cup (1,\infty)$ is the unstable eigenvalue of (the linearised maps at)  $p_s$. In order to simplify the discussion, we assume from now on that all eigenvalues of the linearised maps near $p_u$  and $p_s$ are positive, without loss of generality since otherwise we can consider $G=F^2$ and remember the original signs.

Assume now that no special condition  is imposed that may generate constraints that would limit the possible values of  $P$ (such as, {\it e.g.}, reversibility if $p_u=p_s=O$ in the case of a flow such that $O$ is invariant under the reversibility symmetry).  Then the Palis invariant can be varied continuously, effectively giving rise to an \emph{arc} ({\it i.e.}, a continuous one-parameter family) of distinct topological types parametrised by $P$.  Such a situation, when it occurs for some numerical invariant $I$, is often referred to by saying that there is a \emph{modulus (of topological conjugacy)} associated with the invariant  $I$; one can then also say that the modulus is attached to the map.

More generally, one says that the number of moduli is finite when a finite number of numerical invariants can be prescribed independently of each other and this number is maximal, in that any additional numerical invariant will not independently vary with respect to the others. Otherwise we say the number of moduli is infinite.

In some contexts one might be interested in studying topological invariants in continuous families of homeomorphisms or diffeomorphisms, but since the Palis invariant is made out of smooth data, we will only consider $C^1$-families of $C^k$-diffeomorphisms with $k\geq 1$, so that at least the maps and their derivatives each vary continuously.  Accordingly, the number of moduli of a map, if finite, is the minimal number of parameters needed to get representatives of all the \emph{topological types} ({\it i.e.}, the classes of topological conjugacy) in a $C^1$-neighborhood of the map.  If the number of moduli is infinite, one needs infinitely many parameters, or one can also say that no $C^1$-family depending on finitely many parameters will contain all the topological types existing in any $C^1$-neighborhood of the map.

Since the Palis invariant is expressed as a function of two eigenvalues, it looks \emph{a priori} like a smooth invariant.  If it was only that, there would be no interest in this ratio since all eigenvalues are themselves invariant under smooth conjugacy, {\it i.e.}, under smooth changes of variables (by a trivial application of the chain rule).  Thus, the topological character is both what makes the Palis invariant important and what makes it surprising.  

Anyone who has learned about this unexpected invariant and knows about the \emph{Newhouse phenomenon} ({\it i.e.}, the abundance of non-degenerate tangencies under mild conditions between some stable and unstable manifolds as described in Newhouse's thesis and reported in \cite{Newhouse}) would immediately guess that it is possible to construct a diffeomorphism with infinitely many moduli by assembling these two ingredients.  As the reader may know or have guessed, Palis indicated how to proceed to get such an example in the same paper~\cite{Palis} where he first reported on this invariant.  He indicated how to construct examples of maps whose complete topological unfolding cannot be contained in a family of maps that depends on only finitely many parameters.  Roughly speaking, in \cite{Palis} Palis uses the theory of Newhouse inductively to construct infinitely many simultaneous tangencies at successively smaller and smaller scales, in such a way that each tangency carries a modulus that is independent of the previous ones.  
From there one gets, without too much effort, that the family of H\'enon-like maps 
with positive topological entropy (see, {\it e.g.}, in \cite{dCLM1},~\cite{LM1},~\cite{LM2})
 contains maps having infinitely many moduli, a folklore fact that has been announced by many.

To give proper credit on the issue of having infinite modality we point out that, as reported in \cite{Palis}, an earlier example was constructed by Robinson and Williams~\cite{RandW} using different ideas. 
One indispensable feature in the dealings with both examples is that they must have positive topological entropy. Also notice that a theorem of de Melo and van Strien~\cite{dMvS3} gives necessary and sufficient conditions for the presence of finitely many moduli in the closure of Axiom A diffeomorphisms satisfying the no-cycles condition. 
This leaves open the question: 

\vspace{5pt}

\noindent
\emph{{\bf Q1:} ``Does there exist a zero topological entropy embedding of the closed 2-disk with infinitely many moduli?"} 

\vspace{5pt}

In other words: does there exist a non-chaotic diffeomorphism ({\it i.e.}, one with zero topological entropy) whose complete topological unfolding cannot be produced by any family of maps that would depend only on finitely many parameters?
Since maps with complicated dynamics have many topological features that might change independently, it seems likely that simpler dynamics (in particular, topological entropy zero) makes it more difficult to have examples requiring infinitely many parameters to unfold all possible topological types.  Yet it is known at least since Zehnder's Theorem on homoclinic orbits~\cite{Zehnder}, that \emph{regular behavior can be easily perturbed to chaotic behavior for conservative systems}, and we will see below that the above question has an easy positive answer based on ideas that are at least implicit in~\cite{Harrison}.

Thus, a better question seems to be: 

\vspace{5pt}

\noindent
\emph{{\bf Q2:} ``Does there exist an area-contracting zero topological entropy embedding of the closed 2-disk with infinitely many moduli?"} 

\vspace{5pt}

Another reason to prefer question {\bf Q2} is the importance of attractors not only in dynamics but also in its applications to various scientific disciplines.  In this paper we construct examples of families of area-contracting embeddings of the two-disk (and more precisely \emph{uniformly area-contracting embeddings} that contract volume with a definite rate bounded from above by some positive $\rho<1$) that have infinitely many moduli but zero entropy (in fact these maps are on the boundary of chaos, as we shall see).  The area contraction hypothesis is, as we explain below, what makes the problem somewhat non-trivial. 

Recall that an embedding $F$ is \emph{Kupka-Smale} (or \emph{KS} for short) if all periodic points are hyperbolic and each intersection between the invariant manifolds of those periodic points is transverse. We say that $F$ is 
\emph{$\Omega$-Kupka-Smale} (or \emph{$\Omega$-KS} for short) if all the periodic orbits of $F$ are hyperbolic, and hence would (individually) survive any $C^1$-small enough perturbation of $F$.   If there are infinitely many periodic orbits, it might still be the case that an arbitrarily small perturbation destroys some of them: KS describes the most obvious necessary conditions for \emph{structural stability} (the property that maps $C^1$-near some map $G$ are topologically conjugate to $G$), yet KS is not enough to guarantee structural stability.

Notice that, by definition, structurally stable diffeomorphisms have a zero number of moduli, so one may expect that the $\Omega$-KS property makes a positive answer to {\bf Q2} more unlikely, and this might be the case in the conservative setting.  However, the area-contracting examples that we construct are $\Omega$-KS. The KS version of the maps that we consider were used in~\cite{GST} to build the first example of a $C^\infty$-KS-diffeomorphism of the 2-sphere without sources or sinks. Obviously, only the $\Omega$-KS part of KS can be imposed when considering the Palis invariant.  

As previously mentioned, it had been known for some time that the Palis invariant causes H\'enon-like maps to depend on infinitely parameters when the topological entropy is positive. 
Thus, except to point out where the novelty of our results lie, the ``boundary of chaos'' (boundary of zero entropy) does not need to be mentioned in the statement of our theorem.  
Yet, being at the boundary of chaos is indispensable in the construction that we will present and which differs considerably from the methods used for chaotic maps. 

\begin{comment}
This leaves open the possibility of having simultaneously:

- At most finitely many hyperbolic periodic points (the only points whose invariant manifold may provide a  modulus associated to a ...), 

- Zero entropy, 

- The $\Omega$-KS property and infinitely many moduli in dimension two or above. 

\noindent
This question turns out to have a simple positive answer in dimension 3, using the product of our example with an irrational rotation. We present next a positive solution in dimension 2. 
\end{comment}

Let us now come back to {\bf Q1} and in fact abandon the $\Omega$-KS and the area-contracting conditions.  Consider then an annulus map $F_A$ that leaves the circles $C_r$ of constant radius $r$ invariant, and twists each such circle $C_r$ by an angle $\theta(r)$ so that the smooth function $\theta(r)$ has infinitely many maxima and minima: the values of $\theta(r)$ at these extrema are topological invariants and provide the moduli that we are looking for. 
%%%%%%%%%%%%%%%%%%%%% COMMENT FOLLOWS
\begin{comment}
Notice that these extremal values cannot be chosen arbitrarily if the function $\theta(r)$ is smooth: any limiting rotation angle must at least be unique.  Also notice that in the conjugacy class of such maps one finds a foliation by invariant topological circles and the rotation angles will become rotation numbers.  The example using the extremal values of $\theta(r)$ as moduli is not necessarily conservative; it can be modified to a non-conservative map with infinitely many sub-annuli where the map expand volume and others where volume is contracted. 
At last we feel it relevant to say that the failure of such examples to satisfy the $\Omega$-KS property is massive as soon as $\theta(r)$ changes at all (which is indispensable to get more than one modulus) \verb=meaning of last sentence?=. 
\end{comment}
%%%%%%%%%%%%%%%%%%%%% COMMENT ENDS HERE
The $\theta(r)$-controlled examples have quite mild dynamics but they are nevertheless at the boundary of chaos in the conservative case, at least in the real-analytic case where this follows from a classical result by Zehnder \cite{Zehnder}.  So we arrive at another question that has been even more well-circulated than {\bf Q2} although it has, as we shall see, a simple answer.

\vspace{5pt}

\noindent
\emph{{\bf Q3:} ``Does there exist a diffeomorphism of the plane, or a part of it, with infinitely many moduli and which lies in the interior of the set of maps with zero topological entropy?"}. 

\vspace{5pt}

The answer is ``yes" since examples are easily constructed in the non-conservative version of the annulus maps that we have just mentioned. \emph{Normal hyperbolicity} (contraction toward or expansion from the invariant circles) allows us to have stably invariant circles on which rotation numbers can be varied at will but in a controlled way. The only thing thats needs to be checked is that we can vary the rotation numbers on infinitely many invariant curves, and when they accumulate this variation is sufficiently smooth.

So the interesting problem from this line of questioning seems to be: 

\vspace{5pt}

\noindent
\emph{{\bf Q4:} ``Does there exist a diffeomorphism or embedding of the closed 2-disk that contracts volume with a ratio bounded from above by some $\rho <1$ and which has infinitely many moduli but lies in the interior of the set of maps with zero topological entropy?"}. 

\vspace{5pt}

This question we will leave open as we suspect that its solution would need some new ideas.  We conjecture that the answer is ``no" (and  will use the fact that the truth of a conjecture is quite often much less relevant than the formulation of the corresponding question).
 
In this paper, as in \cite{GST}, \cite{dCLM1},~\cite{LM1},~\cite{LM2}, by \emph{H\'enon-like maps} we mean a particular class of maps that resemble the 2-parameter maps introduced by Michel H{\'e}non in 1976 to give an early numerical example of what seemed to be a non-hyperbolic strange attractor in the plane. H\'enon-like maps are a prototype of horseshoe forming maps when one varies some parameter(s), hence their importance in (low dimensional) dynamics.  Our construction answering {\bf Q2} uses the fine knowledge of the structure of strongly dissipative H\'enon-like maps at the accumulation of a cascade of period doubling bifurcations that has recently been reported in ~\cite{dCLM1},~\cite{LM1}, and ~\cite{LM2}.  Our construction uses infinitely renormalisable H\'enon-like maps that are very dissipative and at the boundary of chaos (also at the boundary of the Morse-Smale diffeomorphisms), hence zero entropy maps that possess periodic orbits whose set of periods is exactly the set of powers of 2.  Successive but not contiguous pairs of these orbits permit us to build independently varying Palis invariants. As we can do this an arbitrary number of times we find the infinite number of moduli with zero entropy that we seek. 

The rest of the paper is organised as follows. 
In Section~\ref{sect:prelim} we recall the relevant definitions and other ingredients necessary for our construction. 
The construction is then presented in Section~\ref{sect:mainconstr}. 
More precisely, in Subsection~\ref{subsect:1stconstr} we construct families of infinitely renormalisable H\'enon-like maps. 
For a fixed parameter value the H\'enon-like map possesses a prescribed collection of heteroclinic tangencies. 
Then in Subsection~\ref{subsect:2ndconstr} we use these markings to inductively construct families with arbitrarily many moduli which we prove to be independent of one another.
Finally, in Subsection~\ref{subsect:tangfamily} we use this second family to construct a \emph{tangency family}, {\it i.e.}, a family where the marked tangencies persist for all parameters (see Section~\ref{sect:tang+full} for more details).

\section{Preliminaries}\label{sect:prelim}
\subsection{Notations and Conventions}\label{subsect:notations}
Let $\pi_x,\pi_y\colon \RR^2\to \RR$ denote the projections onto the $x$- and $y$-coordinates respectively. 
We will identify these with their extensions to $\CC^2$. 
Given points $a,b\in \RR$ we will denote the closed interval between $a$ and $b$ by $[a,b]=[b,a]$.

Given a diffeomorphism $F$ of the plane (or a surface) 
with a periodic saddle $p$ for any distinct pair of points 
$r_0, r_1\in W^u(p)$ we denote the minimal closed subarc 
of $W^u(p)$ containing $r_0$ and $r_1$ by $[r_0,r_1]^u$. 
For $r_0,r_1\in W^s(p)$ define $[r_0,r_1]^s$ likewise. 
Given a closed topological disk $D$ whose boundary consists 
of subarcs of stable and unstable manifolds let $\del^u D$ 
denote the union of closed boundary subarcs lying in unstable manifolds 
and $\del^s D$ denote the union of closed boundary subarcs lying in stable manifolds.

Finally given a function $F$ we will denote the derivative by $DF$ and the derivative with respect to the variable $b$, $x$, $y$, etc. by $\partial_bF$, $\partial_xF$, $\partial_yF$, etc.
Given a map $F$ of the $(x,y)$-plane, depending upon a parameter $b$, we will occasionally denote the derivative with respect to the spacial variables $x$ and $y$ by $\partial_{x,y}F$.

%
%
%
%%%%%%%%%%%%%%%%%%%%%%%%%%%%%%%%%%%%%%%%%%%%%%%%%
\subsection{Unimodal Maps}\label{subsect:unimodal}
In this and all following sections we will adopt the notation and terminology from~\cite{Haz1} 
with minor simplifications stated below, which we can make as we will only consider period-doubling combinatorics.
Let $r\in\{3,\ldots,\infty\}$. 
Denote by $\U^r$ the space of maps $f\in \End^r([0,1])$ satisfying the following properties:
\begin{enumerate}
\item 
$f$ has a unique critical point $c_0=c(f)$ which is nondegenerate and lies in $(0,1)$;
\item 
$f$ is orientation-preserving to the left of $c_0$ and orientation-reversing to the right of $c_0$;
\item 
$f(\del [0,1])\subset [0,1]$;
\item 
there is a unique expanding fixed-point in the interior of $[0,1]$;
\item 
the critical point is bounded from the critical value by a uniform constant.\footnote{If this bound is sufficiently large a neighbourhood of the renormalisation fixed-point will be contained in this space.} 
\end{enumerate}
Given $\Omega\subset \CC$, an open topological disk containing $[0,1]$, 
denote by 
$\U^\omega_\Omega$ the space of maps $f\in \End^\omega([0,1])$ 
satisfying properties (i)--(v) and the additional property:
\begin{enumerate}
\item[(vi)] $f$ admits a holomorphic extension to the domain $\Omega$, upon which it can be factored as $\psi\circ Q\circ \ii$ where $\ii\colon [0,1]\to [-a,1]$ is the unique orientation-preserving affine bijection between those domains, $Q\colon\CC\to\CC$ is given by $Q(z)=1-z^2$ and
$\psi\colon Q\circ\ii(\Omega)\to\CC$ is univalent and fixes the real axis;
\end{enumerate}
%
%We will identify all $\U$-maps with their holomorphic extensions and make two observations: first, this extension will be $\RR$-symmetric (\emph{i.e.}, $f(\bar z)=\overline{f(z)}$ for all $z\in\Omega_x$) and second, the expanding fixed-point will have negative multiplier.
%
\begin{defn}[renormalisable]
A map $f$ in $\U^r$ (respectively, $\U_{\Omega}^\omega$) is \emph{renormalisable} if 
\begin{enumerate}
\item 
there is a subinterval $J^0\subset [0,1]$ containing the critical point such that $f^2(J^0)\subset J^0$;
\item 
the interiors of the subintervals $J^0$ and $J^1=f(J^0)$ are pairwise disjoint;
\item 
the map 
\begin{equation}
\R f=h^{-1}\circ f^2 \circ h
\end{equation}
is an element of $\U^r$ (respectively $\U_{\Omega}^\omega$) for some affine bijection $h$ from $[0,1]$ to $J^0$. 
Note there are exactly two such affine bijections, but there will only be one such that $\R f$ is in $\U^r$ (respectively $\U_{\Omega}^\omega$);
\end{enumerate}
The unimodal map $\R f$ is called the \emph{renormalisation of $f$} and the operator $\R$ is called the \emph{renormalisation operator}. 
\end{defn}
Let $\U^r_0$ and $\U_{\Omega,0}^\omega$ denote the respective subspaces of $\U^r$ and $\U_{\Omega}^\omega$ consisting of maps which are renormalisable. 
Recall that the operator $\R$ possesses a unique fixed-point, which we denote by $f_*$. This unimodal map is analytic and lies in $\U_{\Omega,0}^\omega$ for an appropriately chosen $\Omega$.
Moreover, $f_*$ is a hyperbolic fixed-point of the operator $\R$ with codimension-one stable manifold, and one-dimensional stable manifold.
%If $\RU^n f\in \U_{\Omega_x,0}^\omega$ for all $n\geq 0$ then we will say $f$ is \emph{infinitely renormalisable}. 
%It will be these maps we are most interested in.
%
%%%%%%%%%%%%%%%
\begin{comment}
Let $\mathcal W=\{0,1\}$.
Given a positive integer $n$, let $\mathcal{W}^n$ denote the set of all words over $\mathcal{W}$ of length $n$. 
Let $\mathcal{W}^*$ denote the sets of words over $\mathcal{W}$ of arbitrary finite length and 
let $\overline{\mathcal{W}}$ denote the space of all words of infinite length. 
We endow $\mathcal{W}^*$ and $\overline{\mathcal{W}}$ with the structure of an adding machine and denote the transformation ``addition with infinite carry'' by $\word{w}{}\mapsto 1+\word{w}{}$. 
%That is, for $\word{w}{}=w_0\ldots w_n\in \mathcal{W}^*$,
%\begin{equation}
%\word{w}{}\mapsto 1+\word{w}{}=\left\{\begin{array}{ll}
%(1+w_0,w_1,\ldots,w_n) & w_0\neq 1 \\ 
%(0,0,\ldots,0,1+w_k,\ldots,w_n) & w_0,\ldots,w_{k-1}=1, w_k=0 \\
%(\underbrace{0,\ldots,0}_{n-times},1) & w_0,\ldots,w_n=1.
%\end{array}\right.
%\end{equation}
%The addition on $\overline{\mathcal{W}}$ is similar. 
If $f\in\U_{\Omega,0}$ is infinitely renormalisable there is a collection $\uline{J}=\{J^{\word{w}{}}\}_{\word{w}{}\in \mathcal{W}^*}$ of subintervals with the following properties:
\begin{enumerate}
\item $f(J^{\word{w}{}})=J^{1+\word{w}{}}$ for all $\word{w}{}\in \mathcal{W}^*$;
\item $J^{\word{w}{}}$ and $J^{\word{w}{}'}$ are disjoint for all $\word{w}{}\neq \word{w}{}'$ of the same length;
\item the disjoint union of the $J^{\word{w}{w}}$, $w\in \mathcal{W}$, is a subset of $J^{\word{w}{}}$, for all $\word{w}{}\in \mathcal{W}^*$.
\end{enumerate} 
\end{comment}
%%%%%%%%%%%%%

%
%
%
%%%%%%%%%%%%%%%%%%%%%%%%%%%%%%%%%%%%%%%%%%%%%%%%%%%%%%%
\subsection{H\'enon-like Maps}\label{subsect:henonlike}
%Let $\bar\e>0$. Let $\T_{\Omega}^{\omega}(\bar\e)$ denote the space of maps $\e\in C^\omega(B,\RR_{\geq 0})$, which satisfy 
%\begin{enumerate}
%\item $\e(x,0)=0$;
%\item $\del_y\e\neq 0$;
%\item $\e$ admits a holomorphic extension to $\Omega$;
%\item $|\e|_{\Omega}\leq \bar\e$, where $|\!-\!|_{\Omega}$ denotes the sup-norm on $\Omega$. 
%\end{enumerate}
%Such maps will be called \emph{thickenings} or \emph{$\bar\e$-thickenings} if we want to emphasise it's \emph{thickness} $\bar\e>0$.
%
Let $\bar\e\in [0,1)$.
Denote by $\H^r(\bar\e)$ the space of $C^r$-diffeomorphisms onto their images $F\colon [0,1]^2\to \RR^2$ expressible as
\begin{equation}\label{eq:henonlike}
F(x,y)=\left(f(x)-\e(x,y),x\right)
\end{equation}
where $f\in\U^r$ and $\e\in C^r\left([0,1]^2,\RR_{\geq 0}\right)$ satisfies
\begin{enumerate}
\item 
$\e(x,0)=0$
\item 
$|\e|_{C^r, [0,1]^2}\leq \bar\e$
\end{enumerate}
Given $\Omega=\Omega_x\times \Omega_y\subset \CC^2$, an open topological bidisk containing $[0,1]^2$, 
denote by $\H_{\Omega}^{\omega}(\bar\e)$ the space of analytic diffeomorphisms onto their images 
$F\colon [0,1]^2\to\RR^2$ 
admitting a holomorphic extension to $\Omega$, which are expressible in the 
form~\eqref{eq:henonlike} where $f\in \U_{\Omegax}^\omega$ and $\e\in C^\omega\left([0,1]^2,\RR_{\geq 0}\right)$ satisfies property (i) above together with
\begin{enumerate}
\item[(ii)$_{\omega}$] 
$\e$ admits a holomorphic extension to $\Omega$ on which $|\e|_{\Omega}\leq \bar\e$, where $|\!-\!|_{\Omega}$ denotes the sup-norm on $\Omega$. 
\end{enumerate}
We call the map $\e$ a \emph{thickening} or a \emph{$\bar\e$-thickening} if we want to emphasise its \emph{thickness} $\bar\e>0$.
Denote by $\H^r$ the union of all $\H^r(\bar\e)$.
Define $\H^\omega_{\Omega}$ similarly. 
Denote by $\H^r(0)$ the subspace of the boundary of $\H^r$ consisting of maps whose thickening is identically zero. 
Again, define $\H^\omega_{\Omega}(0)$ similarly.
We call such maps \emph{degenerate H\'enon-like maps}.

Observe that the unimodal renormalisation operator 
$\R$ on $\U^r$ induces an operator, which we also 
denote by $\R$, on a subspace $\H_{0}^r(0)$ of $\H^r(0)$.
Similarly the renormalisation $\R$ acting on 
$\U^\omega_{\Omega_x}$ 
induces an operator on a subspace 
$\H^\omega_{\Omega, 0}(0)$ of $\H^\omega_{\Omega}(0)$.
In the analytic setting, a dynamical extension of this 
operator was constructed in~\cite[Section 3.5]{dCLM1}.  
More precisely it was shown that, for some (small) 
$\bar\epsilon>0$, and some choice of bidisk 
$\Omega\subset\CC^2$, 
there exists a non-trivial open subspace 
$\H^\omega_{\Omega,0}(\bar\epsilon)\subset \H^\omega_\Omega(\bar\epsilon)$
of maps $F$ satisfying the following property: 
there exists 
a unique non-flip saddle fixed-point $p_0$ 
(necessarily lying on the boundary of $[0,1]^2$) and 
a unique flip saddle fixed-point $p_1$ 
(lying in the interior of $[0,1]^2$).
Additionally, there exists an open subset $B^0\subset [0,1]^2$, containing $p_1$ in its boundary,
with the property that 
$B^1=F(B^0)$ and $B^0$ are disjoint and 
$F^2(B^0)\cap B^0$ is nonempty and connected.
For maps $F$ in $\H^\omega_{\Omega,0}(\bar\epsilon)$ a coordinate change $\MT\colon [0,1]^2\to B^0$, depending upon $F$, was constructed so that
\begin{equation}
\RH F=\MT^{-1}\circ F^2\circ \MT
\end{equation}
is again in $\H^\omega_\Omega(\bar\epsilon)$. 
However, unlike in the unimodal case, the map $\MT=\MT(F)$ is a {\it non-affine} diffeomorphism onto its image.
The coordinate change $\MT$ is canonical and chosen so that the operator 
$\RH\colon \H^\omega_{\Omega,0}(\bar\epsilon)\to\H^\omega_{\Omega}(\bar\epsilon)$
is an extension of the unimodal renormalisation operator on the space of renormalisable degenerate H\'enon-like maps $\H^\omega_\Omega(0)$. 
This dynamical extension is called the \emph{H\'enon renormalisation operator}, or simply the \emph{renormalisation operator}, on $\H_{\Omega,0}^{\omega}(\bar\e)$.
%More precisely we have the following theorem.
%\begin{thm}[see~\cite{dCLM1}]\label{thm:R-construction}
%There are constants $C,\bar\e_0>0$ and a domain $\Omega=\Omega_x\times\Omega_y\subset\CC^2$ such that the following holds: 
%for any $0<\bar\e<\bar\e_0$ there is a subspace $\H_{\Omega,0}^{\omega}(\bar\e)$ of $\H_{\Omega}^{\omega}(\bar\e)$ containing $\H_{\Omega,0}^\omega(0)$ and a dynamically defined continuous operator 
%\begin{equation}
%\RH\colon \H_{\Omega,0}^{\omega}(\bar\e)\to \H_{\Omega}^{\omega}(C\bar\e^2)\subset \H_{\Omega}^{\omega}(\bar\e)
%\end{equation}
%which is a continuous extension of $\RU$.
%\end{thm}
%

Clearly the map $F_*(x,y)=(f_*(x),x)$ is a fixed-point of this operator.
It was shown in~\cite[Section 4]{dCLM1} that this fixed-point is a hyperbolic fixed-point for the renormalisation operator and, moreover, the stable manifold has codimension-one.
%
%\begin{thm}[see~\cite{dCLM1}]\label{thm:R-convergence}
%There exists a $\bar\e_0>0$ such that for all $0<\bar\e<\bar\e_0$ the renormalisation operator
%\begin{equation}
%\RH\colon \H_{\Omega,0}^{\omega}(\bar\e)\to\H_{\Omega}^{\omega}(\bar\e)
%\end{equation}
%has a unique fixed-point $F_*$. 
%In fact $F_*(x,y)=(f_*(x),x)$ where $f_*$ is the unique fixed-point of unimodal renormalisation operator $\RU$.
%Moreover $F_*$ is a hyperbolic fixed-point for $\RH$ and possesses a codimension-one stable manifold.
%\end{thm}
%

Given a renormalisable map $F$, consider the non-affine coordinate change $\MT\colon [0,1]^2\to [0,1]^2$ in more detail. 
It is called the \emph{scope map} of $F$.
In~\cite[Section 1]{LM2} it was shown that the scope map $\MT$ can be extended to a vertical strip $A=[0,1]\times I$ containing $[0,1]^2$, so that it remains a diffeomorphism onto its image, and so that the image is a vertical strip contained in $[0,1]^2$ going from the top boundary segment of $[0,1]^2$ to the bottom boundary segment.

If we set $\oo{0}{\MT}=\MT$ and $\oo{1}{\MT}=F\circ\MT$ then $\oo{w}{\MT}$, for $w\in\mathcal \{0,1\}$, will be called the \emph{$w$-th scope map}. 

Let $\I^r(\bar\e)\subset\H^r(\bar\e)$ denote the subspace of infinitely renormalisable H\'enon-like maps.
Denote by $\I^r$ the union of all $\I^r(\bar\epsilon)$.
Then $\I^r\subset \H^r$.
Define $\I_{\Omega}^{\omega}(\bar\e)\subset \H_{\Omega}^{\omega}(\bar\e)$ and $\I^\omega_\Omega\subset H^\omega_\Omega$ similarly.
Given an infinitely renormalisable $F$, either in $\I^r$ or $\I^\omega_\Omega$, we will denote the $n$-th renormalisation $\RH^n F$ by $F_n$. 
For $w\in \{0,1\}$, let 
$\oo{w}{\MT_n}=F_n^w\circ\MT(F_n)\colon \Dom(F_{n+1})\to \Dom(F_{n})$ 
be the $w$-th scope map of $F_n$ as defined above, 
where $\Dom(F_n)$ denotes the domain of $F_n$. 
Then for $\word{w}{}=w_0\ldots w_n\in \{0,1\}^{n+1}$ the map
\begin{equation}
\MT^{\word{w}{}}
=
\MT_0^{w_0}\circ\cdots\circ \MT_n^{w_n}\colon \Dom(F_{n+1})\to \Dom(F_0)
\end{equation}
is called the \emph{$\word{w}{}$-scope map}. 
We denote the collection of all such functions by $\uline\MT$. 
That is, $\uline\MT=\{\MT^{\word{w}{}}\}_{\word{w}{}\in \{0,1\}^*}$ where, given an alphabet $\mathcal{W}$, we denote by $\mathcal{W}^*$ 
the set of all words of finite length over the alphabet $\mathcal{W}$.
With this in mind we define the \emph{renormalisation Cantor set} associated to $F$ by
\begin{equation}
\Cantor=\bigcap_{n\geq 0}\bigcup_{\word{w}{}\in \{0,1\}^n}\MT^{\word{w}{}}([0,1]^2).
\end{equation}
That this is a Cantor set was shown in~\cite{dCLM1}. 
%Moreover a homeomorphism between $\Cantor$ and $\overline{\mathcal{W}}$ was constructed which conjugates the action of $F$ with the action of the adding machine defined above. 
%Let us denote the cylinder sets of $\Cantor$ under the action of $F$ by $\Cantor^{\word{w}{}}$. That is $\Cantor^{\word{w}{}}=\Cantor\cap \MT^{\word{w}{}}(B)$. 
%Then the collection $\uline\Cantor=\{\Cantor^{\word{w}{}}\}_{\word{w}{}\in \mathcal{W}^*}$ has the following structure
%\begin{enumerate}
%\item $F(\Cantor^{\word{w}{}})=\Cantor^{1+\word{w}{}}$ for all $\word{w}{}\in \mathcal{W}^*$;
%\item $\Cantor^{\word{w}{}}$ and $\Cantor^{\word{w}{}'}$ are disjoint for all $\word{w}{}\neq \word{w}{}'$ of the same length;
%\item the disjoint union of the $\Cantor^{\word{w}{w}}$, $w\in \mathcal{W}$, is equal to $\Cantor^{\word{w}{}}$, for all $\word{w}{}\in \mathcal{W}^*$;
%\item $\Cantor =\bigcup_{\word{w}{}\in \mathcal{W}^n}\Cantor^{\word{w}{}}$ for each $n\geq 1$.
%\end{enumerate} 
For a point $z\in\Cantor$ the corresponding word $\word{w}{}$ is called the \emph{address} of $z$. 
In particular we define the \emph{tip} $\tau=\tau(F)$ to be the point in $\Cantor$ with address $\word{w}{}=0^\infty$. 
In other words
\begin{equation}
\tau=\bigcap_{n\geq 1}\MT^{0^n}([0,1]^2)
\end{equation}
This is the point which in~\cite{dCLM1} replaced the role of the critical value in the renormalisation theory for unimodal maps. 
We remark that in~\cite{LM1} it was shown that 
$W(\tau)=\bigcap_{n\geq 1} \MT^{\word{0^n}{}}(A_n)$ 
coincides with the stable manifold of $\tau$, where $A_n=[0,1]\times I_n$ is the vertical strip which is the domain of the extended scope map $\MT^0_n$.

The action of $F$ on $\Cantor$ is metrically isomorphic to the adding machine.
Hence $\Cantor$ has a unique $F$-invariant measure, $\mu$.
The \emph{Average Jacobian} $b=b(F)$ is then defined by
\begin{equation}\label{eq:averagejacobian} 
b(F)=\exp\int \log|\jac F|d\mu.
\end{equation}
Now we can state the main result of~\cite{dCLM1}, which we shall refer to as the \emph{asymptotic formula}.
\begin{thm}\label{thm:universality}
Given $F\in\I_{\Omega}(\bar\e_0)$ there exists a universal $a\in C^\omega([0,1],\RR)$ and universal $0<\rho<1$, depending upon $\Omega$ only, such that
\begin{equation}\label{eq:asymptotic-formula}
F_n(x,y)=(f_n(x)-b^{2^n}a(x)y(1+\bigo(\rho^n)),x)
\end{equation}
where $f_n$ are unimodal maps converging exponentially to $f_*$, the unimodal fixed-point of renormalisation.
\end{thm}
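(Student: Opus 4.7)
The plan is to combine three ingredients: the hyperbolicity of the renormalisation fixed-point $F_*$ for $\RH$ on $\H^\omega_{\Omega,0}(\bar\e_0)$, the multiplicative law for the average Jacobian, and a perturbative analysis of how the thickening transforms under one renormalisation step.

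First, I would establish the multiplicative law $b(\RH F)=b(F)^2$. Since $\RH F=\MT^{-1}\circ F^2\circ \MT$ and the $F$-invariant measure $\mu$ on $\Cantor$ splits evenly into two pieces interchanged by $F$ (corresponding to the two renormalisation ``branches''), the unique invariant measure of $\RH F$ is the pullback under $\MT$ of $\mu$ restricted to the $\MT$-piece, rescaled to total mass one. The contribution $\log|\jac\MT^{-1}\cdot\jac\MT|$ to $\int\log|\jac(\RH F)|$ vanishes, while the $F^2$ factor contributes a factor of $2$ in front of $\int\log|\jac F|\, d\mu$. Iterating gives $b(F_n)=b^{2^n}$.

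Second, writing $F_n(x,y)=(f_n(x)-\e_n(x,y),x)$ and setting $\alpha_n(x,y):=\e_n(x,y)/b^{2^n}$, I would derive a recursion
\begin{equation*}
\alpha_{n+1}=\mathcal L_{f_n}(\alpha_n)+\bigo(\rho^n),
\end{equation*}
where $\mathcal L_{f_n}$ is a bounded linear operator depending only on $f_n$ and the universal scope scalings. This recursion is obtained by expanding $F_n^2$ and the scope map $\MT_n$ to first order in the (small) thickening $\e_n$; the error $\bigo(\rho^n)$ absorbs the second-order interactions of $\e_n$ with itself and with the coordinate change $\MT_n$, each of which picks up an additional factor of $b^{2^n}$. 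Since $f_n\to f_*$ exponentially by hyperbolicity of the unimodal fixed-point, $\mathcal L_{f_n}\to \mathcal L_{f_*}$ exponentially in operator norm.

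Third, I would analyse the limiting operator $\mathcal L_{f_*}$ on a suitable Banach space of analytic functions on $\Omega$. The hyperbolicity of $F_*$ for $\RH$ translates into the statement that $\mathcal L_{f_*}$ has a simple dominant eigenvector of the form $a(x)y$, with all other spectrum of modulus at most $\rho<1$; the $y$-dependence collapses to linear because higher powers of $y$ transform with additional factors of $b^{2^n}$ and drop into the error term. A standard perturbed-contraction argument then gives $\alpha_n(x,y)=a(x)y(1+\bigo(\rho^n))$, and substituting back yields the asymptotic formula. The main obstacle is proving that the dominant eigendirection of $\mathcal L_{f_*}$ is independent of $F$, so that $a(x)$ is truly universal. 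Because $\MT$ depends non-affinely on $F$, its action on quadratic and higher $y$-terms produces cross-terms whose leading contribution must be shown to be absorbed into the $\bigo(\rho^n)$ error. Establishing sharp analytic bounds on $\MT$ and on its Fr\'echet derivative with respect to $F$, uniform in $F\in \I^\omega_\Omega(\bar\e_0)$, is where the bulk of the technical work lies.
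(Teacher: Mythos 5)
You should first note that the paper does not prove this statement at all: Theorem~\ref{thm:universality} is quoted verbatim as the main result of~\cite{dCLM1}, so your proposal has to be measured against the proof given there. Your first step is fine: $b(\RH F)=b(F)^2$, hence $b(F_n)=b^{2^n}$, and your measure-theoretic argument for it is essentially correct. The genuine gap is in your second and third steps. The thickening direction is not governed by a linear operator with a spectral gap: if $F(x,y)=(f(x)-\e(x,y),x)$ and one computes the pre-renormalisation $H\circ F^2\circ H^{-1}$ with the non-affine change of variables $H(x,y)=(f(x)-\e(x,y),y)$, the $y$-dependence of the renormalised map enters only through $\e(u,\phi_y(u))$, where $\partial_y\phi_y=\partial_y\e/(f'-\partial_x\e)=\bigo(\bar\e)$; consequently the new thickening is $\bigo(\bar\e^{\,2})$. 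In other words the degenerate subspace $\H^\omega_\Omega(0)$ is \emph{super}-attracting: the first-order term in $\e_n$ vanishes identically, so your operator $\mathcal L_{f_n}$ is degenerate and the honest recursion, even in the rescaled variable $\alpha_n=\e_n/b^{2^n}$, is quadratic, $\alpha_{n+1}\approx B_{f_n}(\alpha_n,\alpha_n)$. A ``standard perturbed-contraction argument'' does not apply, and the hyperbolicity of $F_*$ established in~\cite{dCLM1} (one unstable direction, codimension-one stable manifold, both inherited from the unimodal theory) does not translate into a simple dominant eigenvector $a(x)y$ of any linearisation transverse to the degenerate maps.

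Even granting your framework, the conclusion $\alpha_n\to a(x)y$ with coefficient exactly $1$ is precisely the nontrivial point: dividing by $b^{2^n}$ is only legitimate once one knows that the doubly-exponential decay rate of the thickening is the \emph{average Jacobian}, i.e.\ an ergodic average over the Cantor attractor, rather than some other $F$-dependent constant $c_F$. That identification is the heart of the proof in~\cite{dCLM1}, and it runs along a different route: since $F_n$ is conjugate to $F^{2^n}$ by the scope diffeomorphisms, the chain rule gives $\jac F_n(z)=\jac F^{2^n}(\MT^{0^n}z)\cdot\jac\MT^{0^n}(z)\,/\,\jac\MT^{0^n}(F_n z)$; exponential distortion and equidistribution estimates for the adding machine on $\Cantor$ give $\jac F^{2^n}=b^{2^n}(1+\bigo(\rho^n))$ on the relevant piece, exponential convergence of renormalisation makes the ratio of scope Jacobians converge to the universal function $a(x)=v_*'(x)/v_*'(f_*(x))$ (the formula recalled in Proposition~\ref{prop:a-nonzero}), and one then integrates $\partial_y\e_n=\jac F_n$ in $y$ using $\e_n(x,0)=0$ to obtain~\eqref{eq:asymptotic-formula}. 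Your proposal contains neither the quadratic (super-attracting) structure of the transverse dynamics nor this Jacobian/ergodic identification, and both are essential; as written, the argument would at best yield $\e_n\approx c_F^{2^n}a(x)y$ with no control linking $c_F$ to $b(F)$.
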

In~\cite{LM1} it was shown that there exists a dynamically-defined presentation of $\Cantor$, which we call the \emph{topological boxing} and denote by $\uline{D}$. 
That is $\uline D=\{D^{\word{w}{}}\}_{\word{w}{}\in \{0,1\}^*}$ is a collection of topological disks satisfying 
\begin{enumerate}
\item $F(D^{\word{w}{}})\subset D^{1+\word{w}{}}$ for all $\word{w}{}\in \{0,1\}^*$,
\item $D^{\word{w}{}}$ and $D^{\word{w}{}'}$ are disjoint for all $\word{w}{}\neq\word{w}{}'$ of the same length,
\item the disjoint union of the $D^{\word{w}{w}}$, $w\in \{0,1\}$, is a subset of $D^{\word{w}{}}$, for all $\word{w}{}\in \{0,1\}^*$,
\item $\Cantor^{\word{w}{}}\subset D^{\word{w}{}}$ for all $\word{w}{}\in \{0,1\}^*$,
\end{enumerate}
The definition of these topological disks is as follows. 
The reader can compare with figure~\ref{fig:extendedWs}.
(We have changed notation slightly from~\cite{dCLM1} 
and~\cite{LM1} for simplicity.)
Let $F$ be a H\'enon-like map in $\H^r$ or $\H^\omega_\Omega$ 
and with a unique non-flip saddle fixed-point $p_0$ and 
unique flip saddle fixed-point $p_1$.
We say that $F$ is \emph{(topologically) renormalisable} 
if $W^u(p_0)$ intersects $W^s(p_1)$ in a single orbit 
$\{r^i\}_{i\in\ZZ}$ (we will fix the indexing in a moment), 
and this intersection is transverse.

For $|\e |_\Omega$ sufficiently small we may assume that 
$W^s_{\mathrm{loc}}(p_1)$ separates $[0,1]^2$ into exactly 
two connected components. (This is certainly the case for 
a degenerate H\'enon-like map; such maps have all local stable 
manifolds being vertical straight lines from the top boundary 
arc of $[0,1]^2$ to the bottom boundary arc of $[0,1]^2$.) 
It follows that there is a first 
intersection point, as we travel from $p_0$ along $W^u(p_0)$, 
between $W^s_{\mathrm{loc}}(p_1)$ and $W^u(p_0)$. We 
denote this point by $r^0$ and define $r^i=F^i(r^0)$ for all 
$i\in\ZZ$.

Observe that the curves $[r^0,r^1]^s$ and $[r^0,r^1]^u$ bound a region $D^0$ with the property that $F^2(D^0)\subset D^0$. If we let $D^1=F(D^0)$ then $\{D^0,D^1\}$ form the first level of the \emph{topological boxing} of $F$. 

In the case when $F$ is infinitely renormalisable the same argument can be applied to the $n$-th renormalisation $F_n$, for each positive integer $n$. 
This time, compare with figure~\ref{fig:boxings}.
Namely, there exists a non-flip saddle fixed-point $p_{0,n}$ and a flip saddle fixed-point $p_{1,n}$ such that $W^u(p_{0,n})$ and $W^s(p_{1,n})$ have intersection equal to a single orbit $\{r^i_n\}_{i\in\ZZ}$ and this intersection is transverse.
Here the indexing is chosen so that, again, the first intersection point, travelling from $p_{0,n}$ along $W^u(p_{0,n})$, between $W^u(p_{0,n})$ and $W^s(p_{1,n})$ is $r_n^0$.
Then the curves $[r_n^0,r_n^1]^s$ and $[r_n^0,r_n^1]^u$ again bound a region $D^0_n$ with the property that $F_n^2(D^0_n)\subset D_n^0$. 
As before we set $D^1_n=F_n(D^0_n)$.
\begin{landscape}
%begin{sidewaysfigure}
\begin{figure}[htp]
\centering
\small
\psfrag{p00}{$p_{0,0}$}
\psfrag{p10}{$p_{1,0}$}
\psfrag{r00}{$r_{0}^0$}
\psfrag{r10}{$r_{0}^1$}
\psfrag{r20}{$r_{0}^2$}
\psfrag{p20}{$p_{2,0}$}
\psfrag{D00}{$D^0_{0}$}
\psfrag{D10}{$D^1_{0}$}
\psfrag{Wup00}{$W^u(p_{0,0})$}
\psfrag{Wsp10}{$W^s(p_{1,0})$}
\psfrag{Wup10}{$W^u(p_{1,0})$}
\psfrag{Wsp20}{$W^s(p_{2,0})$}
\psfrag{MT}{$\Psi$}
\psfrag{p01}{$p_{0,1}$}
\psfrag{p11}{$p_{1,1}$}
\psfrag{r01}{$r_{1}^0$}
\psfrag{r11}{$r_{1}^1$}
\psfrag{r21}{$r_{1}^2$}
\psfrag{D01}{$D^0_{1}$}
\psfrag{D11}{$D^1_{1}$}
\psfrag{Wup01}{$W^u(p_{0,1})$}
\psfrag{Wsp11}{$W^s(p_{1,1})$}
\includegraphics[width=1.5\textheight]{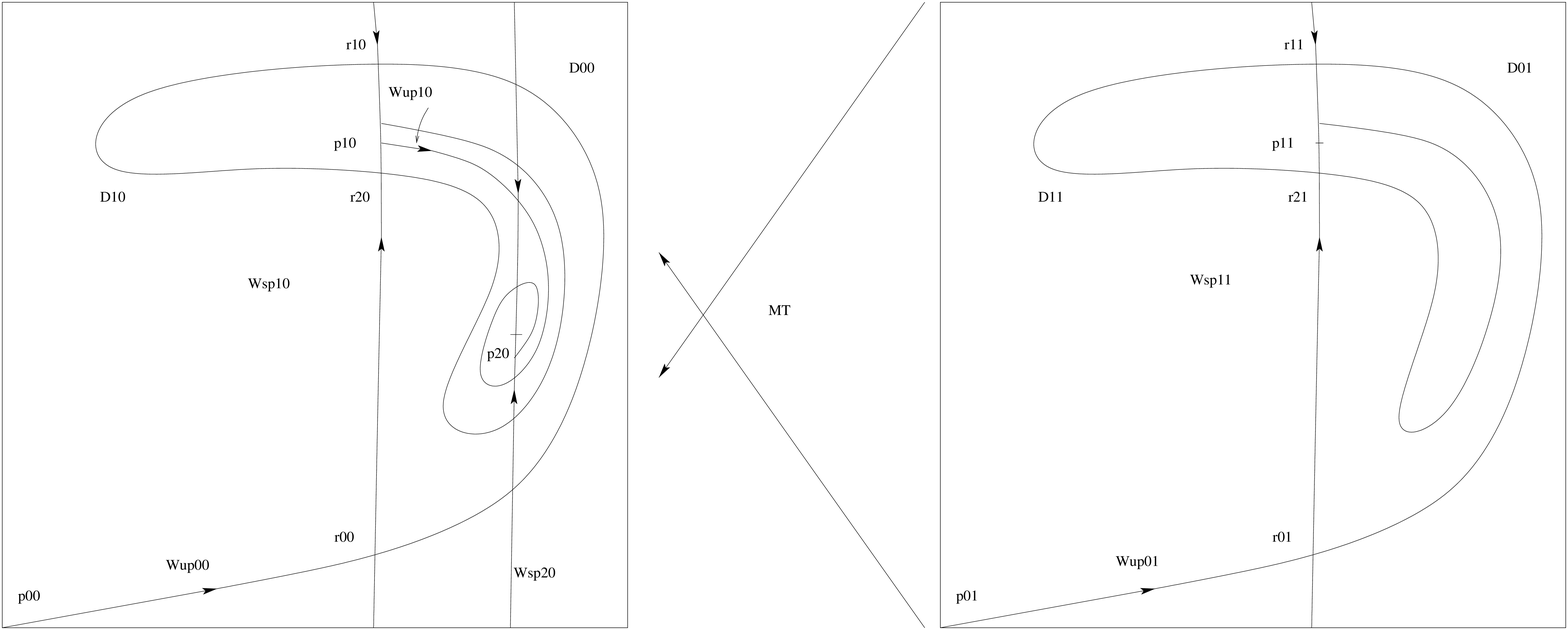}
\caption{The topological boxing}\label{fig:boxings}
\end{figure}
%\end{sidewaysfigure}
\end{landscape}
\noindent
Applying the scope maps then gives us the complete topological boxing.
Namely, given $\word{w}{}\in \{0,1\}^n$ and $w\in \{0,1\}$ we define $D^{\word{w}{}w}=\MT^{\word{w}{}}(D^w)$.
We then define the topological boxing by $\uline{D}=\{D^{\word{w}{}}\}_{\word{w}{}\in \{0,1\}^*}$.

\subsection{The Average Jacobian as a Topological Invariant}\label{sect:kappa}
Consider the following construction from~\cite{LM1}.
We observe that the construction also works in the $C^r$-category.
However, for expositional simplicity we restrict ourselves to the analytic case.
Let $F\in\mathcal H^\omega_{\Omega,0}(\bar\epsilon)$.
Define 
\begin{equation}
\mathcal M=[p_1,r^0]^s\cup F^{-1}\left([p_1,r^0]^s\right)\cup F^{-2}\left([p_1,r^0]^s\right)
\end{equation}
Then, for $\bar\epsilon$ sufficiently small, $\mathcal M$ consists of four connected components $M_i, i=-2,-1,0,1$ indexed so that
\begin{itemize}
\item $M_0=W^s_{\mathrm{loc}}(p_1)$
\item $M_1\cap W^u(p_0)=\emptyset$
\item $M_{i}\cap W^u(p_0)=\{r^i\}$ for $i=-1,-2$
\end{itemize}
We refer the reader to figure~\ref{fig:extendedWs}.
If $F$ is $n$-times renormalisable we can similarly define
\begin{equation}
\mathcal M_n=[p_{1,n},r^0_n]^s\cup F_n^{-1}\left([p_{1,n},r^0_n]^s\right)\cup F_n^{-2}\left([p_{1,n},r^0_n]^s\right)
\end{equation}
$\mathcal M_n$ consists of four connected components $M_{i,n}, i=-2,-1,0,1$ indexed so that
\begin{itemize}
\item $M_{0,n}=W^s_{\mathrm{loc}}(p_{1,n})$
\item $M_{1,n}\cap W^u(p_{0,n})=\emptyset$
\item $M_{i,n}\cap W^u(p_{0,n})=\{r^i_n\}$ for $i=-1,-2$
\end{itemize}
Finally, let $M_i^{0^n}=\MT^{0^n}(M_{i,n})$ for $i=-2,-1,0,1$ and $n\in\NN$.
%\begin{landscape}
%begin{sidewaysfigure}
\begin{figure}[htp]
\centering
\small
\psfrag{p00}{$p_{0,0}$}
\psfrag{p10}{$p_{1,0}$}
\psfrag{r00}{$r_{0}^0$}
\psfrag{r10}{$r_{0}^1$}
\psfrag{r20}{$r_{0}^2$}
\psfrag{p20}{$p_{2,0}$}
\psfrag{D00}{$D^0_{0}$}
\psfrag{D10}{$D^1_{0}$}
\psfrag{Wup00}{$W^u(p_{0,0})$}
\psfrag{Wsp10}{$W^s(p_{1,0})$}
\psfrag{Wup10}{$W^u(p_{1,0})$}
\psfrag{Wsp20}{$W^s(p_{2,0})$}
\psfrag{MT}{$\Psi$}
\psfrag{p01}{$p_{0,1}$}
\psfrag{p11}{$p_{1,1}$}
\psfrag{r01}{$r_{1}^0$}
\psfrag{r11}{$r_{1}^1$}
\psfrag{r21}{$r_{1}^2$}
\psfrag{D01}{$D^0_{1}$}
\psfrag{D11}{$D^1_{1}$}
\psfrag{Wup01}{$W^u(p_{0,1})$}
\psfrag{Wsp11}{$W^s(p_{1,1})$}

\psfrag{M_-2}{$M_{-2}$}
\psfrag{M_-1}{$M_{-1}$}
\psfrag{M_0=W^s}{$M_{0}=W^s_{\mathrm{loc}}(p_1)$}
\psfrag{M_1}{$M_{1}$}
\psfrag{p_0}{$p_{0}$}
\psfrag{p_1}{$p_{1}$}
\psfrag{W^u}{$W^u(p_0)$}
\psfrag{r^-2}{$r^{-2}$}
\psfrag{r^-1}{$r^{-1}$}
\psfrag{r^0}{$r^{0}$}
\psfrag{r^1}{$r^{1}$}
\psfrag{r^2}{$r^{2}$}
\psfrag{D^0}{$D^{0}$}
\psfrag{D^1}{$D^{1}$}

\includegraphics[width=0.5\textheight]{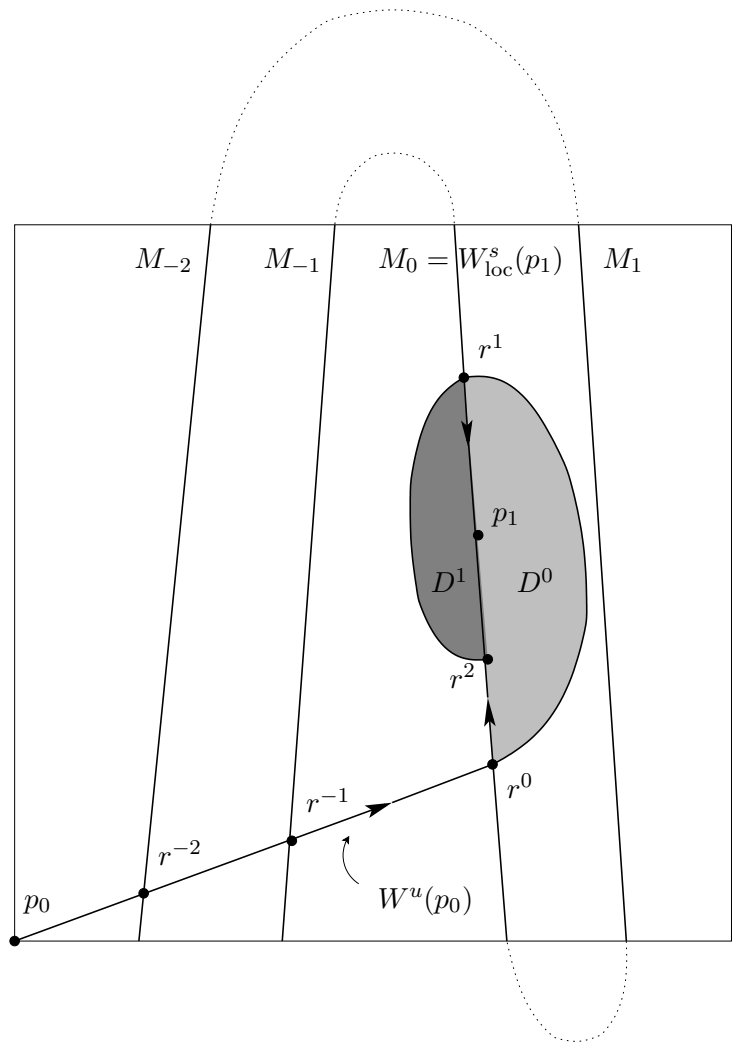}
\caption{The extended stable manifold}\label{fig:extendedWs}
\end{figure}
%\end{sidewaysfigure}
%\end{landscape}
%
We wish to know how $M_i^{0^n}$ accumulates upon the tip $\tau$ of $F$.
For $|\e |_\Omega$ sufficiently small observe that $W^s_{\mathrm{loc}}(\tau)$ separates $[0,1]^2$ into exactly two connected components. 
It also separates $D^0$ into two connected components.
Let $D(\tau)$ denote the connected component of $D^0\setminus W^s_{\mathrm{loc}}(\tau)$ not containing $p_1$.
Define
\begin{equation}
\kappa_F=\min\left\{k\in\NN : D(\tau)\cap M_1^{0^k}\neq \emptyset\right\}
\end{equation}
and
\begin{equation}\label{def:kappa}
\boldsymbol{\kappa} _F=\lim_{n\to\infty}\frac{\kappa_{\RH^n F}}{2^n}
\end{equation}
whenever the limit exists.
Then the following was shown in~\cite{LM1}.
\begin{thm}[Lyubich--Martens]\label{thm:b-topinv}
There exists $\bar\epsilon>0$ and a bidisk $\Omega$ containing $[0,1]^2$ such that the following holds:
Let $F\in \I^\omega_\Omega(\bar\epsilon)$.
Then
\begin{equation}
\boldsymbol{\kappa}_F=\frac{1}{2}\frac{\log b}{\log \sigma}
\end{equation}
where $b=b_F>0$ denotes the average Jacobian of $F$ and $\sigma$ denotes the one-dimensional period-doubling scaling ratio.
\end{thm}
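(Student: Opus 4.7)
The plan is to use the asymptotic formula (Theorem~\ref{thm:universality}) to pin down the positions of the three relevant objects at each scale: the tip $\tau$, the local stable manifold $W^s_{\mathrm{loc}}(\tau)$, and the curves $M_1^{0^k}$. Since $\boldkappa_F$ is the limit of $\kappa_{F_n}/2^n$ and renormalisation satisfies $b(F_n) = b^{2^n}$, it suffices to compute $\kappa_G$ asymptotically as $b(G) \to 0$ along a family of infinitely renormalisable H\'enon-like maps, and then substitute $G = F_n$ and divide by $2^n$.

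First I would analyse the degenerate configuration. For the renormalisation fixed-point $F_* \in \H^\omega_\Omega(0)$ all stable manifolds are vertical lines because $F_*(x,y) = (f_*(x),x)$ is independent of $y$; in particular $M_{1,*}$ and $W^s_{\mathrm{loc}}(\tau_*)$ are parallel vertical segments at distinct $x$-coordinates, with $M_{1,*}$ separated from $D(\tau_*)$ by a horizontal gap of order one. The scope map $\MT$ is affine in the degenerate case and contracts horizontally by $\sigma$, so $M_1^{0^k}(F_*)$ is a vertical segment at horizontal distance $\sim \sigma^k$ from $\tau_*$ on the same side as $M_{1,*}$. Thus $M_1^{0^k}(F_*)\cap D(\tau_*)=\emptyset$ for every $k$, consistent with $\kappa_{F_*}=\infty$ and the formal value of the formula at $b=0$.

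Next I would propagate the asymptotic formula through the geometry to see what happens when $b>0$. For $G \in \I^\omega_\Omega(\bar\epsilon)$ with $b(G)$ small, Theorem~\ref{thm:universality} gives $G_k = F_* + O(b(G)^{2^k})$ in the analytic topology, with the leading-order perturbation $-b(G)^{2^k}a(x)y$ in the first coordinate. Standard stable-manifold estimates then force $W^s_{\mathrm{loc}}(\tau(G_k))$ to deviate from vertical in both tilt and curvature by $O(b(G)^{2^k})$, while $M_{1,k}(G)$ is displaced by $O(b(G)^{2^k})$ from $M_{1,*}$. Pulling these back through the non-affine scope maps $\MT^{0^k}$, and using the exponential convergence of the $G_j$-scope map to $\MT_*$ (a consequence of the hyperbolicity of $\R$ at $F_*$), one gets the local picture inside the scope box $B_k \subset \Dom(G)$: the curve $M_1^{0^k}(G)$ sits at horizontal distance $\sim \sigma^k$ from $\tau(G)$, while $W^s_{\mathrm{loc}}(\tau(G))$ sweeps a horizontal shift across $B_k$ whose magnitude combines the tilt at $\tau$ with a quadratic curvature contribution over the box height $\sim \sigma^k$, both controlled by the $b$-dependent perturbation.

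The competition between these two horizontal scales yields, at the critical value $k=\kappa_G$, the leading relation $\sigma^{2k}\asymp b(G)$. Taking logarithms gives $\kappa_G\sim \tfrac{1}{2}\log b(G)/\log\sigma$, and applying this at $G=F_n$ with $b(F_n)=b^{2^n}$ and dividing by $2^n$ yields the claimed identity. The main technical obstacle is step three: extracting the exponent $2$ (hence the coefficient $1/2$) from the geometric competition. This requires a careful quadratic expansion of $W^s_{\mathrm{loc}}(\tau)$ near $\tau$, balanced against the universal ratio $\sigma$ and the universal profile $a(x)$, and a precise accounting of how the non-affine part of each $\MT_j$ transports the stable direction; the $O(\rho^n)$ error terms from Theorem~\ref{thm:universality} must then be kept strictly subordinate to the leading scale $b^{2^n}$, which is where the uniform analytic control in $\I^\omega_\Omega(\bar\epsilon)$ is essential.
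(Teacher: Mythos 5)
The paper does not prove this theorem itself (it is quoted from~\cite{LM1}), so your proposal has to stand on its own, and as written it has a genuine gap precisely at the step you yourself flag as ``the main technical obstacle''. Your scaling picture near the tip is wrong: the tip plays the role of the \emph{critical value}, and the scope maps $\MT^{0^k}$ are strongly non-conformal there, contracting horizontally at rate $\sim\sigma^{2k}$ and vertically at rate $\sim\sigma^{k}$ (already for the degenerate fixed point the scope map is not affine --- it involves $f_*^{-1}$, whose square-root behaviour at the critical value is exactly what produces the $\sigma^2$ horizontal ratio). Consequently $M_1^{0^k}$ lies at horizontal distance $\asymp\sigma^{2k}$ from $\tau$, not $\asymp\sigma^{k}$ as you assert. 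The exponent $2$, hence the factor $\tfrac12$ in the formula, comes from this non-conformality of the boxes, balanced against a horizontal excursion of $W^s_{\mathrm{loc}}(\tau)$ across the unit-height region $D^0$ that is \emph{linear} in $b$ (the tilt of the stable leaf through the tip); it does not come from ``a quadratic expansion of $W^s_{\mathrm{loc}}(\tau)$ near $\tau$''. Indeed, with the scales you wrote down (distance $\sigma^k$, deviation over box height $\sigma^k$ of size tilt$\times\sigma^k$ plus curvature$\times\sigma^{2k}$), the relation $\sigma^{2k}\asymp b$ simply does not follow; it is asserted, not derived, and your setup would in fact never produce an intersection for small $b$.

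The second missing ingredient is quantitative: you only use the universality theorem in the form of upper bounds $O(b^{2^k})$ on the deviation of the stable leaves from vertical. To conclude $\kappa_{\RH^nF}\sim\tfrac12\,2^n\log b/\log\sigma$ (and even to know the limit defining $\boldkappa_F$ exists) one needs a matching \emph{lower} bound: the tilt of $W^s_{\mathrm{loc}}(\tau)$ at level $n$ is asymptotically a nonzero universal constant times $b^{2^n}$, which in~\cite{LM1} rests on the nonvanishing of the universal function $a$ (compare Proposition~\ref{prop:a-nonzero}) and on controlling how the non-affine parts of the scope maps transport the tilt; an $O(\cdot)$ estimate alone would allow the stable leaf to be much straighter than generic, in which case $\kappa_{\RH^nF}$ could be much larger than predicted and the claimed identity would fail. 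So the overall strategy (compute $\kappa$ at deep renormalisation levels via the asymptotic formula, use $b(F_n)=b^{2^n}$, divide by $2^n$) is the right one and matches~\cite{LM1} in spirit, but the two load-bearing facts --- the $\sigma^{2k}$-versus-$\sigma^k$ anisotropy of the tip geometry and the two-sided tilt asymptotics --- are respectively misidentified and missing.
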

\begin{rmk}
This shows in particular, since $\kappa_{\R^n F}$ is a topological invariant of $F$ for each $n>0$, that the average Jacobian is a topological invariant.
\end{rmk}

Recall the following definition from~\cite{LM1}. 
Given $F\in\I^r$ we say that $F$ possesses an \emph{$(m,n)$-heteroclinic tangency} if there exists a point of tangency $q_{m,n}$ between $W^u(p_m)$ and $W^s(p_n)$, where $p_m$ and $p_n$ denote the periodic orbits of periods $2^m$ and $2^n$ respectively.

\begin{lem}\label{lem:het-tangency}
Given $[b_{\mathrm{min}},b_{\mathrm{max}}]\subset [0,\bar b)$ let $F\in C^1([b_{\mathrm{min}},b_{\mathrm{max}}],\mathcal I^\omega_\Omega)$ be a one-parameter family parametrised by the average Jacobian.
For any positive integer $N$ there exist integers $m$ and $n$ satisfying  $N<m<n$ and a parameter $b\in [b_{\mathrm{min}},b_{\mathrm{max}}]$ such that $F_b$ possesses an $(m,n)$-heteroclinic tangency $q_{m,n}\in W^u(p_m)\cap W^s(p_n)$.
\end{lem}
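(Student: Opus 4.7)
The plan is to exploit the topological invariance of $\boldkappa$ given by Theorem~\ref{thm:b-topinv} in order to force a jump of one of the integer invariants $\kappa_{\R^\ell F_b}$ at an arbitrarily large renormalisation depth $\ell>N$, and then to read off from this jump a heteroclinic tangency of the required type.

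Since the family is parametrised by the average Jacobian, Theorem~\ref{thm:b-topinv} gives
\[
\boldkappa_{F_b}=\tfrac{1}{2}\,\frac{\log b}{\log\sigma}
\qquad\text{for every } b\in[b_{\min},b_{\max}],
\]
which is a strictly monotone continuous function of $b$ with nondegenerate image. Since $\boldkappa_{F_b}=\lim_{\ell\to\infty}\kappa_{\R^\ell F_b}/2^\ell$ and each $\kappa_{\R^\ell F_b}$ is a non-negative integer, not every function $b\mapsto\kappa_{\R^\ell F_b}$ with $\ell>N$ can be constant on $[b_{\min},b_{\max}]$: otherwise the limit would be constant, contradicting the above. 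Fix some $\ell>N$ for which $b\mapsto\kappa_{\R^\ell F_b}$ attains at least two distinct integer values, and let $b_\star\in[b_{\min},b_{\max}]$ be a parameter at which this function jumps, with critical value $k$.

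The next step is to convert the jump into a tangency. The region $D(\tau_\ell)$ and the curves $M_{1,\ell}^{0^k}$ depend continuously on $b$, by continuous dependence of the renormalisation fixed-point structure together with the asymptotic formula of Theorem~\ref{thm:universality}. Consequently, $\kappa_{\R^\ell F_b}=\min\{k:D(\tau_\ell)\cap M_{1,\ell}^{0^k}\neq\emptyset\}$ is locally constant in $b$ so long as the relevant arc $M_{1,\ell}^{0^k}$ meets $\partial D(\tau_\ell)$ transversally. At the jump parameter $b_\star$, the critical arc $M_{1,\ell}^{0^k}$ must therefore be tangent to the boundary $\partial D(\tau_\ell)$.

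Finally, identify this tangency as a heteroclinic tangency of the required depth. The boundary $\partial D(\tau_\ell)$ is built from subarcs of $W^s_{\mathrm{loc}}(\tau_\ell)$, $W^u(p_{0,\ell})$ and $W^s(p_{1,\ell})$, whereas $M_{1,\ell}^{0^k}\subset W^s(p_{1,\ell+k})$ has been transported from level $\ell+k$ down to level $\ell$ by the scope map $\Psi_\ell^{0^k}$. Stable manifolds of distinct hyperbolic periodic orbits are disjoint leaves of a common stable foliation of the non-wandering set, hence cannot be tangent to one another, nor to $W^s_{\mathrm{loc}}(\tau_\ell)$. The tangency must therefore lie between $M_{1,\ell}^{0^k}$ and the $W^u(p_{0,\ell})$ arc of $\partial D(\tau_\ell)$. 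Conjugating by the scope map $\Psi^{0^\ell}$, which intertwines $F_\ell$ with the appropriate iterate of $F_{b_\star}$, transports this tangency to one between $W^u(p_\ell)$ and $W^s(p_{\ell+k})$ for $F_{b_\star}$, where $p_j$ denotes the period-$2^j$ periodic orbit; setting $m:=\ell$ and $n:=\ell+k$ produces the required $(m,n)$-heteroclinic tangency with $N<m<n$. The main obstacle is this last step, specifically excluding tangencies between two stable leaves or with $W^s_{\mathrm{loc}}(\tau_\ell)$; this reduces to the disjointness of the stable foliation of the hyperbolic set, controlled via the asymptotic formula and smallness of the thickening $\bar\e$. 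A secondary concern is ruling out a non-generic corner-crossing at the jump, which is codimension $\geq 2$ in parameter space and hence can be avoided by taking a slightly different depth $\ell>N$ at which the jump is a genuine tangency to a smooth arc.
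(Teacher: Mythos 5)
Your proposal is correct and follows essentially the same route as the paper: both force nonconstancy of the integer invariant $\kappa_{\RH^{\ell}F_b}$ at a deep renormalisation level via Theorem~\ref{thm:b-topinv}, extract a tangency between the unstable boundary arc of $D(\tau_\ell)$ and the curve $M_{1,\ell}^{0^{k}}$ at a jump parameter, and transport it by the scope map $\MT^{0^\ell}$ to an $(m,n)$-heteroclinic tangency of $F_{b}$. The only difference is cosmetic: the paper reparametrises by $\boldkappa$ and uses a quantitative ``thirds'' estimate to get a jump at every sufficiently large depth, whereas you use a soft pigeonhole argument to get one depth, and you make explicit (via disjointness of stable leaves) the step the paper leaves implicit.
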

\begin{proof}
Given a family $F_b$ as above, we denote by $F_{\boldkappa}$ the reparametrisation of the family by the invariant $\boldkappa$. By Theorem~\ref{thm:b-topinv} such a smooth reparametrisation exists and, moreover, satisfies $\boldkappa_{F_\kappa}=\boldkappa$.

Given an interval $[\boldkappa_{\mathrm{min}},\boldkappa_{\mathrm{max}}]$, denote its length by $l$. 
By definition~\ref{def:kappa} and Theorem~\ref{thm:b-topinv} (which shows the limit in the definition of $\boldkappa$ exists) whenever a positive integer $m$ is sufficiently large
\begin{equation}
\left| \boldkappa-\frac{\kappa_{\RH^m F_{\boldkappa}}}{2^m}\right| < \frac{l}{3}
\end{equation}
for all $\boldkappa\in [\boldkappa_{\mathrm{min}},\boldkappa_{\mathrm{max}}]$. 
Consequently there exists an positive integer $N>0$ such that
\begin{equation}
\left|\frac{\kappa_{\RH^m F_{\boldkappa_{\mathrm{max}}}}}{2^m}-\frac{\kappa_{\RH^m F_{\boldkappa_{\mathrm{min}}}}}{2^m}\right|>l-\frac{2l}{3}=\frac{l}{3}
\end{equation}
whenever $m>N$.
By increasing $N$ if necessary, we may further assume that $2^m l/3>1$ for all $m>N$. 
Then it follows that 
\begin{equation}
\left|\kappa_{\RH^m F_{\boldkappa_{\mathrm{max}}}}-\kappa_{\RH^m F_{\boldkappa_{\mathrm{min}}}}\right|>1
\end{equation}
for all $m>N$. 
By continuity this implies that for any $m>N$ there exists a parameter $\boldkappa\in [\boldkappa_{\mathrm{min}},\boldkappa_{\mathrm{max}}]$ such that, for the corresponding map $\RH^mF_{\boldkappa}$, the arc $\del^u D^m(\tau_m)\subset W^u(p_{0,m})$ is tangent to the curve $M_{1,m}^{0^{n}}\subset W^s(p_{m+n,m})$ (where for simplicity we write $n=\kappa_{\RH^mF_{\boldkappa}}$). 
Denote this point of tangency by $q_{0,m+n; m}$.

By taking the diffeomorphic image of these objects under the scope map $\MT^{0^m}$, it follows that the diffeomorphism $F_{\boldkappa}$ possesses a heteroclinic tangency $q_{m, n; 0}=\MT^{0^m}(q_{0, m+n; m})$ between the saddles $p_{m, 0}$ and $p_{n, 0}$. 
\end{proof}

\begin{rmk}\label{rmk:arcs-disjoint-from-pieces}
It follows from the argument above that the unstable arc $[p_{m,0},q_{m,n;0}]^u=\MT^{0^m}([p_{0,m},q_{0,m+n; m}]^u)$ does not intersect any of the pieces of depth $n+2$ or greater.

It also follows that the stable arc $[q_{m, n; 0},p_{n, 0}]^s$, when intersected with $D^{0^m}$ consists of finitely many arcs passing from the top of $D^{0^m}$ to the bottom together with one arc from $q_{m,n}$ to the boundary and one arc from $p_{n, 0}$ to the boundary.

These arcs are disjoint from any piece of depth $n+2$ and therefore the whole arc $[q_{m, n; 0},p_{n, 0}]^s$ does not intersect any piece of depth $n+2$ or greater.
\end{rmk}

\subsection{Saddle Connections}\label{subsect:Palis}
%Construction of function P
Next we consider another invariant of topological conjugacy discovered by Palis~\cite{Palis}.
Let $F$ be an orientation-preserving diffeomorphism of the plane (or a surface) for which there exists
\begin{enumerate}
\item fixed saddles $p_0$ and $p_1$ (not necessarily distinct)
\item a point $q\in W^u(p_0)\cap W^s(p_1)$ such that $W^u(p_0)\not\pitchfork_q W^s(p_1)$
\end{enumerate}
Let $\lambda^{s}_{j}$ and $\lambda^{u}_{j}$ denote respectively the contracting and expanding eigenvalues of $DF$ at $p_j$, where $j=0$ or $1$. 
Then Palis showed the following (see also~\cite{Guckenheimer} for the proof).
\begin{thm}[Palis~\cite{Palis}]\label{thm:Palis}
Given $F$, $p_0$ and $p_1$ as above the quantity 
\begin{equation}\label{eqn:palisinv}
P_{F:p_0,p_1}=\frac{\log |\lambda_0^s|}{\log |\lambda_1^u|}
\end{equation}
is a topological invariant. 
Consequently, if the orientation-preserving diffeomorphism $F'$ is topologically conjugate to $F$ 
with corresponding saddles $p_i'$, then $P_{F':p_0',p_1'}=P_{F:p_0,p_1}$.
\end{thm}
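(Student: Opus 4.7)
The plan is to reduce the topological-invariance statement to a local computation in smooth linearizing coordinates near $p_0$ and $p_1$, and then to exhibit $P$ as the unique scalar ``modulus'' that a topological conjugacy between $F$ and $F'$ must preserve. The tangency at $q$ is the rigid device that couples the transverse directions at the two saddles and prevents their eigenvalue logarithms from varying independently. First I would invoke Sternberg's theorem for two-dimensional hyperbolic saddles (passing to $F^2$ to remove finitely many low-order resonances if necessary; this leaves $P$ unchanged) to obtain smooth charts near $p_0$ and $p_1$ in which $F$ takes the linear forms $(x,y)\mapsto(\lambda_0^u x,\lambda_0^s y)$ and $(u,v)\mapsto(\lambda_1^u u,\lambda_1^s v)$, with $W^u$ and $W^s$ appearing as the coordinate axes. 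The same reduction would be performed for $F'$, producing primed eigenvalues $\lambda_0^{\prime s}$, $\lambda_1^{\prime u}$, and so on.

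Next I would fix $M,N$ so that $F^{-M}(q)=(a,0)$ lies in the $p_0$-chart and $F^{N}(q)=(0,c)$ lies in the $p_1$-chart. The composition $T:=F^{M+N}$, read in these charts, is a local diffeomorphism $(a,0)\mapsto(0,c)$ carrying $\{y=0\}$ into $\{u=0\}$. The quadratic tangency at $q$, after propagation by the linear dynamics on both sides, translates into $\partial_x T_1(a,0)=0$ together with $\partial_y T_1(a,0)\neq 0$, where $T=(T_1,T_2)$. In particular, to first order $T$ sends the vertical vector at $(a,0)$ (the stable direction at $p_0$) to a horizontal vector at $(0,c)$ (the unstable direction at $p_1$), which is the geometric content of the tangency.

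The Palis invariant would then be extracted from a scaling identity. Taking $y_n:=(\lambda_1^u)^{-n}y_\ast$ and computing $F^{n}T(a,y_n)$, the linear dynamics near $p_1$ sends the small $u$-contribution $\partial_y T_1\cdot y_n$ to a fixed finite value while the $v$-contribution decays, so $F^{n}T(a,y_n)\to(\partial_y T_1\cdot y_\ast,0)\in W^u(p_1)$. A conjugacy $h$ with $h\circ F=F'\circ h$ has well-defined asymptotic H\"older exponents $\alpha_s^0=\log|\lambda_0^{\prime s}|/\log|\lambda_0^s|$ along $W^s(p_0)$ and $\alpha_u^1=\log|\lambda_1^{\prime u}|/\log|\lambda_1^u|$ along $W^u(p_1)$, obtained from the functional equation on each invariant axis. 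Transporting the above scaling identity by $h$, matching the rate at which the image sequence accumulates on $W^u(p_1')$, and equating exponents yields $\alpha_s^0=\alpha_u^1$, which rearranges to $P_{F:p_0,p_1}=P_{F':p_0',p_1'}$.

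The main obstacle is the rigorous extraction of a well-defined asymptotic exponent for a merely continuous conjugacy: the functional equation $\tilde h(\lambda z)=\lambda'\tilde h(z)$ has a large solution space, and one must verify that the limit $\lim_n\log|\tilde h(\lambda^n z)|/\log|\lambda^n z|$ exists, is independent of the base point, and equals $\log|\lambda'|/\log|\lambda|$. This would be handled via continuity of $h$ at the saddle together with a monotonicity/squeeze argument using nested fundamental domains, after which the matching of exponents through the tangency becomes a direct algebraic consequence of the transition-map computation in Step~2.
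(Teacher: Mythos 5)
The paper itself does not prove Theorem~\ref{thm:Palis}: it is quoted from Palis, with the proof referred to Guckenheimer's CIME notes, so your proposal has to be judged against the classical argument rather than against anything in the text. Several ingredients you assemble are sound. The reduction of $P_{F:p_0,p_1}=P_{F':p_0',p_1'}$ to the identity $\alpha_s^0=\alpha_u^1$ is correct, and the part you single out as the main obstacle is in fact the solid part: any homeomorphism germ conjugating $z\mapsto\lambda z$ to $z\mapsto\lambda' z$ on a half-line satisfies $\lim_{z\to 0}\log|\tilde h(z)|/\log|z|=\log|\lambda'|/\log|\lambda|$, by exactly the fundamental-domain squeeze you sketch; so $h$ restricted to $W^s(p_0)$ and to $W^u(p_1)$ does have the asymptotic exponents you claim. (Two small repairs: $C^1$-linearisation of a planar saddle \`a la Hartman is all your computations need, whereas ``passing to $F^2$ to remove resonances'' does not remove them, since squaring the eigenvalues preserves resonance relations; and $T$ carries $\{y=0\}$ to a curve \emph{tangent} to $\{u=0\}$, the image of the vertical vector being merely transverse to $\{u=0\}$, not horizontal.)

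The genuine gap is the final step, ``transporting the scaling identity by $h$ and matching rates, yielding $\alpha_s^0=\alpha_u^1$''. The exponent facts control $h$ only \emph{at points of the invariant curves}; your points $(a,y_n)$ and $\zeta_n=F^nT(a,y_n)$ lie off $W^s(p_0)\cup W^u(p_1)$, and a merely continuous conjugacy transports no rate information there: continuity gives $h(\zeta_n)\to h(\zeta_\infty)$ with no speed, so there is no ``rate of accumulation on $W^u(p_1')$'' to match. Worse, $\lambda_0^s$ never actually enters your construction: the displacements $y_n=(\lambda_1^u)^{-n}y_*$ are scaled by the expansion at $p_1$, no orbit segment passing near $p_0$ is used, and hence nothing couples the exponent $\alpha_s^0$ (a statement about $h$ along $W^s(p_0)$) to anything in the identity; the concluding equation $\alpha_s^0=\alpha_u^1$ is asserted rather than derived. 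A decisive symptom: Steps 2--3 never use the non-transversality --- the condition $\partial_xT_1(a,0)=0$ is recorded but unused, since the scaling identity only needs $T(a,0)\in W^s_{\mathrm{loc}}(p_1)$ --- so if the argument were valid it would equally prove that $\log|\lambda_0^s|/\log|\lambda_1^u|$ is a conjugacy invariant attached to a \emph{transverse} heteroclinic point, which is false (such configurations occur in structurally stable, e.g.\ Morse--Smale, surface diffeomorphisms, whose eigenvalues can be perturbed freely without changing the topological class). In the classical proof the tangency enters through its topological signature: the one-sided ``parabolic tongues'' $F^n$ of an arc of $W^u(p_0)$ through $q$ accumulating on $W^u(p_1)$, played against the passage of orbits near $p_0$ (this is where $\lambda_0^s$ arises), with the comparison arranged so that only continuity of $h$ plus its asymptotics along the invariant curves is ever invoked. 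Your outline is missing precisely that coupling mechanism.
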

\begin{rmk}
The above can be generalised to periodic saddles by taking a sufficiently large iterate of $F$.
Then the above ratio would have the factor $n_1/n_0$ in front, where $n_i$ is the period of $p_i$.
However, since the periods are also topologically invariant data we can forget this factor and consider $P_{F:p_0,p_1}$ defined as above in this case too.
\end{rmk}
%
%\begin{rmk}
%That $C^2$-diffeomorphisms must be considered follows from the method of proof, which uses the Sternberg linearisation theorem~\cite{Sternberg1}. However, we do not know either of a proof or counterexample of this statement in $\Diffeo_+^1([0,1]^2)$.
%
%\verb=van Strien reckons it's in one of the appendices in his thesis.=
%\end{rmk}
Let $F\in\H^r$ have periodic saddle points $p_0$ and $p_1$ as above.
As hyperbolicity is an open property, there exists a neighbourhood $\mathcal P=\mathcal P_{F:p_0,p_1}$ of $F$ in $\H^r$ such that the saddles $p_0$ and $p_1$ persist. 
Therefore $P_{F:p_0,p_1}$ extends to a well-defined map 
\begin{equation}\label{eq:palismap}
P_{F:p_0,p_1}\colon \mathcal P_{F:p_0,p_1}\subset \H^r\to \RR
\end{equation}
Note that this map is not a topological invariant on all of $\mathcal P_{F:p_0,p_1}$.
However, there exists a subspace $\mathcal Q_{F:p_0,p_1,q}\subset \mathcal P_{F:p_0,p_1}$ containing diffeomorphisms $F'$ such that, the continuations $p_0'$ and $p_1'$ of the saddles $p_0$ and $ p_1$ respectively possess a tangency $q'$ between the relevant stable and unstable manifolds.
Hence the restriction $P_{F: p_0,p_1}\colon\mathcal Q_{F,p_0,p_1}\to\RR^d$ is a topological invariant and will be called the \emph{Palis map with markings} $F,p_0,p_1$.
\begin{prop}
For any choice of $F\in\H^r$ with a saddle connection as above there exists a neighbourhood $\mathcal P\subset \H^r$ containing $F$ such that
$P_{F: p_0,p_1}\in C^1(\mathcal P,\RR^d)$.
\end{prop}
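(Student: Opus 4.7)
The plan is to exhibit $P_{F:p_0,p_1}$ as a composition of three $C^1$ pieces on some neighbourhood $\mathcal P\subset\mathcal P_{F:p_0,p_1}$ of $F$ in $\H^r$: tracking the continuations of the two saddles, extracting their stable and unstable eigenvalues, and finally forming the ratio of logarithms. Each step is standard, but pasting them together requires some care about which derivative is used where.

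Let $n_j$ denote the period of $p_j$, so that $F^{n_j}(p_j)=p_j$. Because $p_j$ is hyperbolic, the linear map $DF^{n_j}(p_j)-\mathrm{Id}$ is invertible. The iteration map $F'\mapsto (F')^{n_j}$ is of class $C^1$ in the $C^r$-topology (composition on $C^r$-diffeomorphisms loses one derivative), so the Implicit Function Theorem applied to $(F')^{n_j}(z)-z=0$ provides a $C^1$ continuation $F'\mapsto p_j(F')$ defined on a sufficiently small neighbourhood of $F$. Since hyperbolicity, and more specifically the property $|\lambda_j^s|<1<|\lambda_j^u|$, is open, we can further shrink the neighbourhood so that $p_0(F')$ and $p_1(F')$ remain saddles with real eigenvalues of the required moduli throughout $\mathcal P$.

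By the chain rule, the $2\times 2$ matrix $D(F')^{n_j}(p_j(F'))$ depends $C^1$ on $F'$. Its two eigenvalues are real and distinct on $\mathcal P$, so the quadratic formula $\tfrac{1}{2}\tr\pm\tfrac{1}{2}\sqrt{\tr^2-4\det}$ yields $C^1$ dependence of $\lambda_0^s(F')$ and $\lambda_1^u(F')$ on $F'$. Finally, the map $(x,y)\mapsto \log|x|/\log|y|$ is smooth on the open set $\{xy\neq 0,\ |y|\neq 1\}$, which contains $(\lambda_0^s(F),\lambda_1^u(F))$ by hypothesis; composing with the $C^1$ eigenvalue map yields $P_{F:p_0,p_1}\in C^1(\mathcal P,\RR)$. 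The only real obstacle is the function-space IFT step, i.e. making rigorous that $F'\mapsto (F')^{n_j}$ is $C^1$ in the Banach-space topology on $\H^r$; this is why one works with $r\geq 1$ throughout, and is the point at which one appeals to the standard calculus of composition operators on spaces of $C^r$-maps.
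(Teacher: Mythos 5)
Your argument is correct and is essentially the paper's proof with the details written out: the paper simply cites the facts that hyperbolic (periodic) saddles persist and that the eigenvalues of the derivative at such points vary smoothly with the map, which is exactly what you establish via the implicit function theorem and the quadratic formula before composing with the smooth map $(x,y)\mapsto \log|x|/\log|y|$ away from $|y|=1$ and $xy=0$. No substantive difference in approach, only in the level of detail.
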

\begin{proof}
This follows immediately from the definition~\eqref{eq:palismap} 
above and the fact that derivatives of hyperbolic fixed-points 
have eigenvalues which vary smoothly.
\end{proof}
\noindent
Fix an open neighbourhood $W$ of $q$ not containing $p_0$ or $p_1$.
Let
\begin{itemize}
\item 
$l$ denote the connected component of $W^s(p_1)\cap W$ containing $q$
\item 
$\bar{l}$ denote the arc of minimal length in $W^s(p_1)$ containing $p_1$ and $l$
\item 
$k$ denote the connected component of $W^u(p_0)\cap W$ containing $q$
\item 
$\bar{k}$ denote the arc of minimal length in $W^u(p_0)$ containing $p_0$ and $k$
\end{itemize}
\begin{defn}
A coordinate change $\beta\colon (W,q)\to (\RR^2,0)$ is \emph{horizontal} if for some $a\geq 2$,
\begin{itemize}
\item 
$\beta$ preserves horizontal lines
\item 
$\beta(l)\subset \{x=0\}$
\item 
$\beta(k)\subset \{x=-|y|^a\}$
\end{itemize}
The coordinate change is \emph{vertical} if for some $a\geq 2$,
\begin{itemize}
\item 
$\beta$ preserves vertical lines
\item 
$\beta(k)\subset \{y=0\}$
\item 
$\beta(l)\subset \{y=-|x|^a\}$
\end{itemize}
\end{defn}
\begin{rmk}\label{rmk:horiz-vert-coords}
The $(m,n)$-heteroclinic tangency $q$ constructed in 
Section~\ref{sect:kappa} has the property that in a 
neighbourhood of $q$, the components of $W^u(p_m)$ and 
$W^s(p_n)$ passing though $q$ are graphs over the 
$y$-axis, and that their preimages are graphs over the 
$x$-axis. 
From this it can be shown that there exists a horizontal 
change of coordinates at $q$ and a vertical change of 
coordinates as above.
\end{rmk}
Let $\mathcal W\subset \mathcal P_{F,p_0,p_1}$ denote 
the set of H\'enon-like diffeomorphisms $F'$ such that 
$p_0$ is also a fixed-point of $F'$, and such that 
$\bar{l}\subset W^u(p_0;F')$.
\begin{rmk}
If we did not restrict our attention to $\mathcal W$ 
we would construct, for $F'\in\mathcal P_{F,p_0,p_1}$, 
a coordinate change $\beta_{F'}$ so that 
$\beta_{F'}(l_{F'})=\{x=0\}$.
However, as the perturbations we will need to make 
later will leave $F$ unchanged in a neighbourhood 
of $\bar{l}$ we will consider this simplified case only.
\end{rmk}
\begin{prop}
Let $F$, $p_0$, $p_1$, and $q$ be as above.
There exists $\epsilon>0$ such that any 
$C^2$-$\epsilon$-small perturbation 
$F'\in\mathcal W$ of $F$ satisfies
\begin{equation}
\beta(l_{F'})\subset \{x=0\},\qquad \beta(k_{F'})\subset\{x=\psi(y)\}
\end{equation}
for some unimodal map 
$\psi=\psi_{F'}\in C^2(\RR,\RR)$ 
depending upon $F'$.
\end{prop}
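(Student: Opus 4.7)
The plan is to handle the stable component $l_{F'}$ and the unstable component $k_{F'}$ separately, exploiting the fact that the coordinate change $\beta$ is defined from $F$ and kept fixed throughout. For the stable arc, the definition of $\mathcal W$ together with the preceding Remark forces every $F'\in\mathcal W$ to agree with $F$ on a neighbourhood of $\bar{l}$. Consequently the local stable manifold of $p_1$ inside $W$ is unchanged, so $l_{F'}=l$ as a subset of $W$, and hence $\beta(l_{F'})=\beta(l)\subset\{x=0\}$ follows directly from the definition of a horizontal coordinate change.

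For the unstable arc, I would invoke the $C^2$-version of the invariant manifold theorem at the hyperbolic fixed-point $p_0$: on a sufficiently small $C^2$-neighbourhood of $F$, the continuation $p_0(F')$ exists and $W^u_{\mathrm{loc}}(p_0;F')$ depends $C^2$-continuously on $F'$ in the $C^2$-topology. Since the arc $\bar{k}$ is obtained from $W^u_{\mathrm{loc}}(p_0;F)$ by finitely many iterations of $F$, and since the composition of finitely many $C^2$-close diffeomorphisms is itself $C^2$-close on any compact set, the iterated arc $\bar{k}_{F'}$ is $C^2$-close to $\bar{k}$ for $F'$ sufficiently $C^2$-close to $F$. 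Applying the fixed $C^2$-diffeomorphism $\beta$ then shows that $\beta(k_{F'})\subset\beta(W)$ is $C^2$-close to $\beta(k)=\{x=-|y|^a\}$.

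The last step is to convert this closeness into the required graph representation. The curve $\beta(k)$ is a graph over the $y$-axis whose only vertical tangent is at $y=0$; for $F'$ sufficiently close to $F$ the curve $\beta(k_{F'})$ retains the property of being a graph over the $y$-axis (with at most one vertical tangent), so it can be written as $\{x=\psi(y)\}$ for some $\psi\in C^2$ defined on the $y$-projection of $\beta(W)$, which we then extend smoothly to all of $\RR$. In the generic case of a quadratic tangency ($a=2$) we have $\psi(y)=-y^2+g(y)$ with $g$ of $C^2$-norm controlled by $\epsilon$, so $\psi''(0)=-2+g''(0)<0$; thus $\psi$ is strictly concave in a neighbourhood of the origin, proving unimodality.

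The main obstacle is controlling the $C^2$-closeness of the finitely iterated arc $\bar{k}_{F'}$ uniformly in $F'$; everything else reduces to a local graph-perturbation argument. This control is a routine consequence of the $C^2$-invariant manifold theorem combined with continuity of composition in $C^2$, but it determines how small $\epsilon$ must be chosen. The extension of $\psi$ from a local neighbourhood to all of $\RR$ is harmless, since unimodality and concavity are only needed in a fixed neighbourhood of the tangency.
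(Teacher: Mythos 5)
Your proof follows essentially the same route as the paper's: the stable arc is frozen by the definition of $\mathcal W$ (so $\beta(l_{F'})\subset\{x=0\}$ is immediate), the compact unstable arc $\bar{k}_{F'}$ varies $C^2$-continuously with $F'$ (the paper simply quotes smooth dependence of finite segments of invariant manifolds of hyperbolic fixed points, which you justify via the $C^2$ invariant manifold theorem plus finitely many compositions), and the conclusion is transferred through the fixed coordinate change $\beta$ to yield a graph $\{x=\psi(y)\}$ that is $C^2$-close to $\{x=-|y|^a\}$. The one place you diverge is the unimodality step: your strict-concavity argument ($\psi''(0)=-2+g''(0)<0$) uses $a=2$, whereas the definition of a horizontal coordinate change permits any $a\geq 2$; the paper instead argues that $\psi'$, being $C^1$-close to the derivative of $-|y|^a$, has a unique zero, which is the formulation you would need to quote for the general case (and which, like your version, is only fully forceful when the unperturbed second derivative is bounded away from zero, i.e.\ in the quadratic case). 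Apart from that caveat, your argument is complete and matches the paper's proof.
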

\begin{proof}
Finite segments of invariant manifolds of hyperbolic 
fixed points vary smoothly under smooth perturbation.
Hence given $\epsilon_1>0$, there exists $\epsilon>0$ 
such that for any $C^2$-$\epsilon$-small perturbation $F'$ 
of $F$, the corresponding arc $\bar{k}_{F'}$ is 
$C^2$-$\epsilon_1$-close to $\bar{k}_{F}$.
Moreover, if $F'$ lies in $\mathcal{W}$, $\bar{l}_{F'}$ 
coincides with $\bar{l}_{F}$.
Consequently, for $\epsilon$ sufficiently small, 
$\beta(\bar{k}_{F'})$ is the graph of a $C^2$-smooth 
function $\psi$ which is $C^2$-$\epsilon_1$-close to 
$-|x|^a$ and $\beta(\bar{l}_{F'})\subset \{x=0\}$.

Finally, since $\psi'$ is $C^1$-$\epsilon_1$-close to 
the derivative of $-|x|^a$, it also has a unique zero 
if $\epsilon_1$ sufficiently small.
Hence  $\psi$ is also unimodal if $\epsilon$ is 
sufficently small.
\end{proof}
Given a marking $F$, $p_0$, $p_1$, and $q$ and given the $\epsilon>0$ determined by the above proposition,
we denote by $\mathcal W_0$ the $C^2$-$\epsilon$-neighbourhood of $F$ in $\mathcal W$.
For $F'\in\mathcal W_0$ let $c_{F'}$ denote the unique critical point of $\psi_{F'}$. 
\begin{defn}
Let $F$, $p_0$, $p_1$ and $q$ be as above.
For $F'\in\mathcal V$, define
\begin{equation}
Q_{F,p_0,p_1,q}(F')=\psi_{F'}(c_{F'})
\end{equation}
\end{defn}

\section{The Main Construction}\label{sect:mainconstr}
Now we come to the proof of the main theorem.
For each positive integer $d$ we will construct a $d$-parameter family of infinitely renormalisable H\'enon-like maps which are topologically distinct. 
We break this into three steps:
\begin{enumerate}
\item[(A)] 
First, we construct a $d$-parameter family of Henon-like maps such that at an initial parameter there are $d$ distinct $(m,n)$-heteroclinic tangencies. 
Moreover, at this parameter the stable multiplier of $p_m$, corresponding to the $(m,n)$-heteroclinic tangency, varies regularly with the parameter.

\item[(B)] 
Second, given the family from (A), we construct a $2d$-parameter family where each additional parameter controls a local perturbation in a neighbourhood of one of the $d$ points of tangency for the initial parameter in (A).

\item[(C)] 
Finally, given a family from (B), we show that those parameters with $d$ tangencies form locally a $d$-parameter submanifold. 
Restricting the $2d$-parameter family to this submanifold gives a $d$-parameter family with $d$-tangencies which persist.  
\end{enumerate}
\subsection{Construction of the First Family}\label{subsect:1stconstr}
Let $F$ be an infinitely renormalisable H\'enon-like map.
Given integers $M$ and $N$ satisfying $0\leq M<N$ define
\begin{equation}
T_{M,N}=
\bigcup_{\word{w}{}\in\mathcal \{0,1\}^{M}}D^{\word{w}{}}
\setminus 
\overline{\bigcup_{\word{w}{}\in\{0,1\}^{N}}D^{\word{w}{}}}
\quad
\mbox{and}
\quad
T_{M,*}=
\bigcup_{\word{w}{}\in\{0,1\}^{M}}D^{\word{w}{}}
\end{equation}
We adopt the convention that $\{0,1\}^0=\emptyset$ and that $D^\emptyset=[0,1]^2$.
It then follows that $T_{0,N}=[0,1]^2\setminus \bigcup_{\word{w}{0}\in\{0,1\}^N}D^{\word{w}{}}$ for each positive integer $N$.
The aim of this section is to show the following.
\begin{thm}[First Construction]\label{thm:constr1}
Let $r\in \{3,4,\ldots,\infty\}$.
For each integer $d\geq 2$ there exists 
\begin{itemize}
\item 
$\BBB\subset \RR^d$ an open subset, 
\item 
$F\in C^1(\BBB,\mathcal I^r)$,
\item 
$0=N_0<m_1<n_1<N_1<\ldots<N_{d-2}<m_{d-1}<n_{d-1}<N_{d-1}$, 
\item 
$b^*=(b^*_1,b^*_2,\ldots,b^*_d)\in \BBB$ 
\end{itemize}
such that
\begin{enumerate}
\item 
for $i\neq d$, $F(b)|T_{N_{i-1},N_i-1}(b)$ depends only on $b_i$,
\item 
for $i=d$, $F(b)|T_{N_{d-1},*}(b)$ depends only on $b_d$,
\item 
$b(F_{b})=b_d$, where $b(F)$ denotes the average Jacobian of $F$,
\item 
for $i=1,2\ldots,d-1$, at $b=b^*$:
\begin{enumerate}
\item[(a)] 
$F_{b}$ possesses an $(m_i,n_i)$-heteroclinic tangency $q_i$ such that $[p_{m_i},q_i]^u, [q_i,p_{n_i}]^s\subset T_{N_{i-1},N_{i}-1}$
\item[(b)] 
$\lambda^s_{m_i}$ varies regularly with $b_i$
\end{enumerate}
\end{enumerate} 
\end{thm}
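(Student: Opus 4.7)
The plan is to proceed in two phases: first, pin down the scales $(m_i,n_i,N_i)$ and base parameter values $s_i^*$ by iterated applications of Lemma~\ref{lem:het-tangency} to a one-parameter base curve; second, inflate that base curve to a $d$-parameter family by inserting $d$ localised, independent perturbations --- one per annular region --- so that at a compound base point $b^*$ all the required tangencies coexist. Concretely, start with a smooth one-parameter family $G_s\in\I^r$, $s\in(0,\bar b)$, of infinitely renormalisable H\'enon-like maps parametrised by the average Jacobian, so $b(G_s)=s$; such a curve exists by Theorems~\ref{thm:universality}--\ref{thm:b-topinv} (for instance an arc in the stable manifold of renormalisation along which the thickening scales with $s$). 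Set $N_0=0$ and, for $i=1,\ldots,d-1$ in turn, apply Lemma~\ref{lem:het-tangency} on a compact subinterval of $(0,\bar b)$ with $N>N_{i-1}$ to obtain integers $N_{i-1}<m_i<n_i$ and a parameter $s_i^*\in(0,\bar b)$ such that $G_{s_i^*}$ carries an $(m_i,n_i)$-heteroclinic tangency $q_i$; then set $N_i:=n_i+2$. By Remark~\ref{rmk:arcs-disjoint-from-pieces}, both saddles $p_{m_i},p_{n_i}$ and the connecting arcs $[p_{m_i},q_i]^u\cup[q_i,p_{n_i}]^s$ for $G_{s_i^*}$ lie in $T_{N_{i-1},N_i-1}$.

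For the second phase, choose a smooth partition of unity $\{\chi_i\}_{i=1}^d$ on $[0,1]^2$ with $\chi_i\equiv 1$ on $T_{N_{i-1},N_i-1}$ for $i<d$, $\chi_d\equiv 1$ on $T_{N_{d-1},*}$, and mutually disjoint supports except through smooth transitions; this is possible because the distinct $T$-regions live at different depths of the topological boxing and so have positive separation. Pick $s_d^*\in(0,\bar b)$ arbitrarily and set $b^*:=(s_1^*,\ldots,s_{d-1}^*,s_d^*)$. Writing $G_s=(f(x)-\e_s(x,y),x)$ in a slice where the unimodal part $f$ is held fixed, define, for $b=(b_1,\ldots,b_d)$ in a small neighbourhood of $b^*$,
\begin{equation*}
F_b(x,y) \;=\; \left(f(x) \;-\; \sum_{i=1}^d \chi_i(x,y)\,\e_{b_i}(x,y),\; x\right).
\end{equation*}
This is genuinely H\'enon-like, and for $b$ near $b^*$ it stays $C^r$-close to $G_{s_1^*}$ and so in $\I^r$. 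By construction $F_b$ coincides with $G_{b_i}$ on $T_{N_{i-1},N_i-1}$ and with $G_{b_d}$ on $T_{N_{d-1},*}$, giving (i)--(ii). The renormalisation Cantor set $\Cantor$ lies inside $T_{N_{d-1},*}$, so $b(F_b)=b(G_{b_d})=b_d$, yielding (iii). At $b=b^*$, $F_b$ carries each $(m_i,n_i)$-tangency $q_i$ with saddles and arcs confined to $T_{N_{i-1},N_i-1}$, giving (iv)(a). Since $p_{m_i}$ itself lies in $T_{N_{i-1},N_i-1}$, its stable multiplier $\lambda^s_{m_i}(F_b)$ is determined by $b_i$ alone and inherits the regular dependence of $\lambda^s_{m_i}(G_s)$ on $s$, which follows from Theorem~\ref{thm:universality} (the asymptotic formula expresses $\lambda^s_{m_i}$ as an explicit function of the average Jacobian up to $\bigo(\rho^{m_i})$), yielding (iv)(b).

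The main obstacle is arranging the interpolation so that $F_b$ is truly H\'enon-like \emph{and} the saddles and heteroclinic arcs of each tangency genuinely remain inside their designated annular region rather than leaking across levels. The containment of the arcs is exactly what Remark~\ref{rmk:arcs-disjoint-from-pieces} supplies, and it is what forces the choice $N_i=n_i+2$. Preservation of the H\'enon-like normal form is handled by interpolating only the thickenings and keeping the unimodal factor $f$ fixed, which is available by restricting to a codimension-one slice of the stable manifold of renormalisation parametrised by thickening size. A secondary delicate point is verifying (iv)(b): one must check $\partial_s \lambda^s_{m_i}(G_s)\neq 0$ at $s=s_i^*$, and this follows from the explicit form of the asymptotic formula in Theorem~\ref{thm:universality}.
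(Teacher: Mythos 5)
Your overall strategy --- a one-parameter family parametrised by the average Jacobian, Lemma~\ref{lem:het-tangency} applied at successive depths, and a bump-function gluing so that each region $T_{N_{i-1},N_i-1}$ is governed by its own parameter --- is essentially the paper's, which does the same thing inductively via its Interpolation Lemma~\ref{lem:interpolate}. The genuine gap is item (iv)(b). You assert that regular dependence of $\lambda^s_{m_i}$ on the parameter ``follows from the explicit form of the asymptotic formula''. It does not: in Theorem~\ref{thm:universality} the error term $\bigo(\rho^n)$ depends on the map, hence on $b$, and nothing in that theorem controls its $b$-derivative, while the $b$-derivative of the main term $b^{2^n}$ is $2^n b^{2^n-1}$, itself exponentially small; so non-vanishing of $\del_b\lambda^s$ cannot be read off from the formula. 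This is precisely why the paper devotes Proposition~\ref{prop:good-families} to the point, and that proof needs real input: estimates on $\del_b$ and $\del_{x,y}$ of both $\tr D\R^nF$ and $\det D\R^nF$ at the flip point, a bound on $\del_b p_{1,n}(b)$, the non-degeneracy $a(p_*)\neq 0$ of Proposition~\ref{prop:a-nonzero}, and finally the comparison $\del_b\mathsf{D}_n\neq\lambda_n^+\del_b\mathsf{T}_n$. The order of quantifiers also matters: you must secure regularity for \emph{all} sufficiently large $m$ and all parameters in a subinterval \emph{before} invoking Lemma~\ref{lem:het-tangency}, since that lemma only produces a tangency at \emph{some} parameter $s_i^*$ and \emph{some} $m_i<n_i$; in your ordering you fix $s_i^*,m_i$ first and then hope regularity happens to hold there. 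The paper's proof takes a good family first and locates tangencies second.

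Two further points in your gluing step need repair. First, ``for $b$ near $b^*$ it stays $C^r$-close to $G_{s_1^*}$ and so in $\I^r$'' is not a valid inference: infinite renormalisability is not an open condition (near the renormalisation fixed point the infinitely renormalisable maps form a codimension-one, stable-manifold-like set), so closeness to an infinitely renormalisable map proves nothing. Membership of the glued family in $\I^r$ has to be built into the interpolation itself --- as in the statement of Lemma~\ref{lem:interpolate}, where the glued map coincides on the deep boxes with a genuinely infinitely renormalisable member of the original family --- not deduced from proximity. Second, your reduction to ``a slice where the unimodal part $f$ is held fixed'' with only the thickening varying is not available: a curve obtained by fixing $f$ and rescaling $\e$ is generically transverse to the infinitely renormalisable set, so it cannot be parametrised by the average Jacobian while remaining in $\I^r$; one must interpolate whole infinitely renormalisable maps, as the paper does. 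With Proposition~\ref{prop:good-families} supplied, the quantifier order corrected, and the gluing performed as in Lemma~\ref{lem:interpolate}, your two-phase scheme does deliver the theorem.
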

The construction is by induction.
First we take a one-parameter family $F_{b_1}$ of infinitely renormalisable H\'enon-like maps parametrised by the average Jacobian. 
It can be shown by the discussion in Section~\ref{sect:kappa} that for some parameter there is an $(m_1,n_1)$-heteroclinic tangency for some $m_1<n_1$.
From this we construct, via a bump-function argument, a two-parameter family $F_{b_1,b_2}$, 
so that the support of the first parameter is contained in the union of sets of level $N_1$
and the second parameter is contained in the the complement of the sets of level $N_1-1$.
Moreover, for each parameter the new family coincides with the original family on its support.

We will now show that there exists a good one-parameter family.
First we need a preliminary lemma.
\begin{prop}\label{prop:a-nonzero}
Let $a$ denote the universal function from the asymptotic formula~\eqref{eq:asymptotic-formula}.
Let $f_*$ denote the unimodal period-doubling renormalisation fixed-point.
Let $p_*$ denote the $p_1$-fixed-point for $f_*$.
Then $a(p_*)\neq 0$.
\end{prop}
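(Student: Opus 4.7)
My plan is to prove $a(p_*)\neq 0$ by deriving a functional equation for $a$ from the renormalisation fixed-point structure, and then showing that the vanishing of $a$ at $p_*$ would propagate and contradict the nontriviality of the asymptotic formula.

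The starting point is that the asymptotic formula~\eqref{eq:asymptotic-formula} holds uniformly across renormalisation levels: for every $n$,
\begin{equation*}
F_n(x,y) = \bigl(f_n(x) - b^{2^n} a(x)\, y\,(1+\bigo(\rho^n)),\ x\bigr),
\end{equation*}
with the same universal function $a$. Since $F_{n+1} = \MT_n^{-1}\circ F_n^2\circ \MT_n$, I would expand $F_n^2$ to first order in $b^{2^n}$ and compare the $y$-coefficient with $F_{n+1}$. Passing to the limit $n\to\infty$ and using the known convergence of the scope maps $\MT_n$ to a universal limit $\MT_*$ (with first coordinate $\psi_*(x)=\pi_x\MT_*(x,\cdot)$), this should yield a functional equation of the schematic form
\begin{equation*}
a(x) = C_1(x)\,a(\psi_*(x)) + C_2(x)\,a\bigl(f_*(\psi_*(x))\bigr),
\end{equation*}
where $C_1,C_2$ are explicit smooth (nonvanishing) functions built from $f_*'$, $\psi_*$, and the scope-map Jacobian. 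The two terms arise by chain rule from the two passes of $F_n$ in the composition $F_n\circ F_n$.

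The next step is to evaluate this equation at $x=p_*$. Because $p_*$ is the $f_*$-fixed point, the points $\psi_*(p_*)$ and $f_*(\psi_*(p_*))$ sit in prescribed locations relative to the period-doubling structure, allowing one to iterate the functional equation. Assuming for contradiction that $a(p_*)=0$, iteration propagates the vanishing of $a$ to the entire forward orbit of $p_*$ under the pair of maps $\psi_*$ and $f_*\circ\psi_*$. Since this orbit accumulates on a set with interior accumulation points (inherited from the density of the $f_*$ dynamics on the Cantor attractor together with the preimages of $p_*$), and since $a$ is analytic (being inherited from the analytic fixed point $F_*^0$), one concludes $a\equiv 0$. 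This contradicts the nontriviality of the asymptotic formula: for any $F\in\I^\omega_\Omega(\bar\e_0)$ with average Jacobian $b>0$, the Jacobian of $F_n$ is $b^{2^n}a(x)(1+\bigo(\rho^n))$, which is not identically zero.

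The main obstacle will be Step 1, namely deriving the functional equation cleanly. This requires carefully extracting the first-order-in-$b^{2^n}$ correction of $F_n^2$ and matching it against $\MT_n F_{n+1}\MT_n^{-1}$, tracking all scope-map derivatives through the renormalisation limit. The analysis is a first-order perturbation of the computations already carried out in~\cite{dCLM1} and~\cite{LM1}. A secondary obstacle is verifying that the forward orbit of $p_*$ under the iterated functional equation indeed has an interior accumulation point; should this fail, one would fall back on a direct evaluation of the explicit constants $C_1(p_*)$, $C_2(p_*)$ to show that the equation itself forces $a(p_*)\neq 0$.
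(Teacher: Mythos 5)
Your route is genuinely different from the paper's, but as written it has two gaps that I think are fatal. First, the functional equation in Step 1 cannot come out the way you schematise it. The $y$-dependence of $F_{n+1}$ sits at order $b^{2^{n+1}}=(b^{2^n})^2$, so it is invisible at first order in $b^{2^n}$: expanding $F_n^2$ to first order and "comparing the $y$-coefficient with $F_{n+1}$" matches terms of different orders, and the first-order terms are in any case absorbed by the ($F_n$-dependent) scope map $\MT_n$, since $F_{n+1}$ is again H\'enon-like with thickness $\bigo(b^{2^{n+1}})$. The relation that actually survives the limit is the one for Jacobians, $\jac F_{n+1}(z)=\jac F_n(F_n(\MT_n z))\,\jac F_n(\MT_n z)\,\jac\MT_n(z)/\jac\MT_n(F_{n+1}z)$, which is \emph{multiplicative}: schematically $a(x)=c(x)\,a(z_1(x))\,a(z_2(x))$ with $c$ non-vanishing, not an additive two-term identity. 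Second, even granting a relation of either shape, the propagation step does not work: from $C_1\,a(u)+C_2\,a(v)=0$ you cannot conclude $a(u)=a(v)=0$, and from $c\,a(u)\,a(v)=0$ you only get $a(u)=0$ \emph{or} $a(v)=0$. Following a zero branch produces a sequence of zeros that may close up into a finite cycle, and a non-constant analytic $a$ is perfectly compatible with a finite zero set permuted in this way; so "$a(p_*)=0$ implies $a\equiv 0$" is not established, and the contradiction never materialises.

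The paper's proof is much more direct and uses an input you did not invoke: from~\cite{dCLM1} the universal function has the explicit form $a(x)=v_*'(x)/v_*'(f_*(x))$ with $v_*$ analytic and non-constant. Since $p_*$ is a fixed point of $f_*$, numerator and denominator take the same value at $p_*$; if $v_*'(p_*)\neq 0$ one gets $a(p_*)=1$, and if $v_*'(p_*)=0$ one compares the first non-vanishing derivative of $v_*'$ at $p_*$ with the corresponding derivative of $v_*'\circ f_*$ (which differs only by chain-rule powers of $f_*'(p_*)$) and applies l'H\^opital to conclude $a(p_*)\neq 0$. If you want to salvage your approach, the realistic fallback is your own last remark -- work with the corrected multiplicative relation and evaluate its coefficients at $p_*$ -- but at that point you are essentially re-deriving the closed formula for $a$ that the paper simply quotes.
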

\begin{proof}
Recall that $a(x)=\frac{v_*'(x)}{v_*'(f_*(x))}$, where $v_*, f_*$ are universal, analytic, and non-constant.
Observe that if the numerator is zero at $x=p_*$ then so is the denominator.
Similarly it can be shown that if the $n$-th derivative of the numerator is zero at $x=p_*$ the $n$-th derivative of the denominator is also zero.
However, $v_*$ is analytic and non-constant. 
Therefore there exists a first $n$ such that $v_*^{(n+1)}(p_*)\neq 0$. 
It follows, by analyticity of $v_*'\circ f$ at $p_*$ together with l'Hopital's rule, that $a(p_*)\neq 0$.
\end{proof}
Now let us show that there exist good families.
\begin{prop}\label{prop:good-families}
Let $F\in C^1\left((0,\Bar{b}],\I^\omega_\Omega\right)$ be parametrised by the average Jacobian.
There exists $\Bar{\Bar{b}}\in (0,\Bar{b}]$ such that for all sufficiently large positive integers $n$, 
%$\lambda_n^{u}(b)$ and 
$\lambda_n^{s}(b)$ is a regular function of the parameter $b$ at all $b\in (0,\Bar{\Bar{b}}]$.
\end{prop}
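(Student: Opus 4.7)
The plan is to extract the leading-order asymptotics of $\lambda_n^s(b)$ from the asymptotic formula of Theorem~\ref{thm:universality} and then differentiate in $b$ to verify that the derivative is non-zero for all large $n$ and all $b$ in a suitably small subinterval $(0,\bar{\bar b}]$. Concretely, the periodic saddle of period $2^n$ for $F_b$ is the scope-image of the flip saddle fixed-point $p_{1,n}(b)$ of the renormalisation $F_{b,n}=\R^n F_b$, so $\lambda_n^s(b)$ equals the stable eigenvalue of $DF_{b,n}$ at $p_{1,n}(b)$, which I will compute from the explicit form of $F_{b,n}$ given by Theorem~\ref{thm:universality}.

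By Theorem~\ref{thm:universality} we have
\begin{equation*}
F_{b,n}(x,y)=\left(f_n(x)-b^{2^n}a(x)y\bigl(1+\bigo(\rho^n)\bigr),\,x\right).
\end{equation*}
The flip saddle fixed-point $p_{1,n}(b)$ lies on the diagonal and is close to $(p_*,p_*)$, where $p_*$ is the flip fixed-point of $f_*$; by the implicit function theorem together with hyperbolicity, $p_{1,n}(b)\to(p_*,p_*)$ exponentially and $C^1$-smoothly in $b$. Computing $DF_{b,n}$ at $p_{1,n}(b)$ yields a matrix whose trace converges to $f_*'(p_*)$ and whose determinant equals $b^{2^n}a(p_*)(1+\bigo(\rho^n))$. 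Since $|f_*'(p_*)|>1$, the larger eigenvalue $\lambda_n^u(b)$ converges to $f_*'(p_*)$, and so the smaller one satisfies
\begin{equation*}
\lambda_n^s(b)=\frac{a(p_*)}{f_*'(p_*)}\,b^{2^n}\bigl(1+\bigo(\rho^n)\bigr),
\end{equation*}
uniformly in $b$ on compact subsets of $(0,\bar b]$. By Proposition~\ref{prop:a-nonzero}, $a(p_*)\neq 0$, so the leading coefficient is non-zero.

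Differentiating in $b$, and using that the $\bigo(\rho^n)$ error in Theorem~\ref{thm:universality} can be upgraded to a uniform $C^1$-bound as a consequence of the analyticity of $\R$ and its hyperbolicity at $F_*$, I obtain
\begin{equation*}
\partial_b\lambda_n^s(b)=\frac{a(p_*)}{f_*'(p_*)}\,2^n b^{2^n-1}\bigl(1+\smallo(1)\bigr)
\end{equation*}
as $n\to\infty$, uniformly on compact subsets of $(0,\bar b]$. Choosing $\bar{\bar b}\in(0,\bar b]$ so that this $\smallo(1)$ term is bounded by $\tfrac{1}{2}$ in absolute value for all sufficiently large $n$ and all $b\in(0,\bar{\bar b}]$, the derivative $\partial_b\lambda_n^s(b)$ is non-zero, which is the required regularity.

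The main technical obstacle is the $C^1$-upgrade of Theorem~\ref{thm:universality}: the published statement gives only a $C^0$ estimate $\bigo(\rho^n)$ on the error, while to differentiate in $b$ one needs that $\partial_b$ of this error be negligible compared with the leading term $2^n b^{2^n-1}$. This follows from hyperbolic renormalisation theory in the following way: $\R$ is real-analytic on its domain and $F_*$ is a hyperbolic fixed-point of $\R$ with codimension-one stable manifold (Section~\ref{subsect:henonlike}), so a $C^1$-transversal variation of the family $F_b$ propagates to a uniform $C^1$ bound on the variation of $F_{b,n}$ along the centre-stable direction, which contracts at the rate $\bigo(\rho^n)$. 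This is exactly the input needed to justify the asymptotic for $\partial_b\lambda_n^s$.
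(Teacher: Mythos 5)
Your leading-order computation ($\lambda_n^s(b)\approx \tfrac{a(p_*)}{f_*'(p_*)}b^{2^n}$, with Proposition~\ref{prop:a-nonzero} guaranteeing a nonzero coefficient) is the same mechanism the paper exploits, but the step you yourself single out as the main obstacle is a genuine gap, and the justification you offer does not close it. The quantity you must isolate is $2^nb^{2^n-1}$, which is super-exponentially small in $n$. Hyperbolicity of $F_*$ under $\R$ can at best give an upper bound of the form $\|\partial_b(\R^nF_b)\|=\bigo(\rho^n)$ (and note that your "transversal variation" picture is off: every $F_b$ is infinitely renormalisable, so the whole family lies in the stable set of $\R$ and converges to $F_*$; the variation is along that set, not transverse to it). An $\bigo(\rho^n)$ bound on the $b$-derivative of $F_{b,n}$, or of the absolute error in \eqref{eq:asymptotic-formula}, is astronomically larger than $2^nb^{2^n-1}$, so it cannot show that differentiating the error term is negligible against differentiating $b^{2^n}$. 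What your argument actually needs is control of the $b$-derivative of the \emph{relative} error (the factor $1+\bigo(\rho^n)$ multiplying $b^{2^n}a(x)y$), namely a statement that $\partial_b$ respects the $b^{2^n}$-factorised form with $|\partial_b E_n|=\smallo(2^n/b)$; this is not contained in Theorem~\ref{thm:universality} as stated, and it does not follow formally from hyperbolicity of the renormalisation fixed point. As written, the crucial inequality $\partial_b\lambda_n^s=\tfrac{a(p_*)}{f_*'(p_*)}2^nb^{2^n-1}(1+\smallo(1))$ is therefore unproved.

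The paper's proof is structured precisely so that no such fine $C^1$-strengthening of the universality error is needed. It differentiates the characteristic equation of $D\R^nF_b(p_{1,n}(b))$ and reduces the claim to $\partial_b\mathsf{D}_n\neq\lambda_n^+\partial_b\mathsf{T}_n$, where $\mathsf{T}_n,\mathsf{D}_n$ are the trace and determinant. The lower bound $|\partial_b\mathsf{D}_n|\gtrsim 2^nb^{2^n-1}|a(x_*)|$ comes from the explicit $b^{2^n}$ prefactor in the determinant together with Proposition~\ref{prop:a-nonzero}, while the competing term carries the tiny factor $\lambda_n^+\sim -b^{2^n}$ from \eqref{eq:eigenvals-asymp}; consequently only crude bounds are required on the remaining derivatives -- $|\partial_b\mathsf{T}_n|$ may even grow like $(3/2)^n$, and the fixed-point motion needs only $|\partial_b p_{1,n}|=\bigo(b^{2^n})$ -- and these are obtained from Cauchy estimates in the spatial variables (analyticity on $\Omega$) and a mean-value argument, not from differentiating the $\bigo(\rho^n)$ error in $b$. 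To repair your proposal you would either have to prove a parameter-$C^1$ refinement of \eqref{eq:asymptotic-formula} controlling the derivative of the relative error, or restructure the comparison as the paper does so that the error terms you cannot control are multiplied by $\lambda_n^+\sim b^{2^n}$ and thus harmless.
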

\begin{proof}
Let $F_b$ be any one-parameter family of infinitely renormalisable H\'enon-like maps parametrised by the average Jacobian $b$.
Consider $\R^nF_b$.
Let $p_{1,n}(b)=\left(x_n(b),y_n(b)\right)$ denote the unique flip-saddle fixedpoint for $\R^nF_b$.
Let $\mathsf{T}_n(b)$ and $\mathsf{D}_n(b)$ denote respectively the trace and determinant of $\mathsf{M}_n(b)=D \R^nF_b(p_{1,n}(b))$. 
Let $\lambda_n^+(b)$ and $\lambda_n^-(b)$ denote the two eigenvalues of $\mathsf{M}_n(b)$, where $\pm$ is determined by which sign $\pm$ is used in the quadratic formula.
Then
\begin{equation}
\lambda_n^\pm(b)^2-\mathsf{T}_n(b)\lambda_n^\pm(b)+\mathsf{D}_n(b)
=
0
\end{equation}
Differentiating with respect to $b$ and rearranging we find
\begin{equation}
\del_b\lambda_n^\pm (2\lambda_n^\pm-\mathsf{T}_n)
=
\del_b\mathsf{T}_n\lambda_n^\pm-\del_b\mathsf{D}_n
\end{equation}
By convergence of renormalisation
\begin{equation}\label{eq:eigenvals-asymp}
\lambda_n^+= -b^{2^n}(1+\bigo(\rho^n)), \qquad 
\lambda_n^-= f_*'(p_{1,*})(1+\bigo(\rho^n))
\end{equation}
This implies that $|2\lambda_n^\pm-\mathsf{T}_n|=|\lambda_n^+-\lambda_n^-|$ is bounded from above for $n$ sufficiently large. 
Consequently, for sufficiently large $n$, it follows that $\del_b\lambda_n^\pm=0$ if and only if $\lambda_n^\pm=\del_b\mathsf{D}_n/\del_b\mathsf{T}_n$.

Hence, by equations~\eqref{eq:eigenvals-asymp}, to show that $\del_b\lambda_n^\pm\neq 0$  it suffices to show that $\del_b D_n/\del_bT_n$ is not of the order $-b^{2^n}$ or $1$.
%(In fact, as $\lambda_n^\pm$ are both negative for $n$ sufficiently large it suffices to show the quotient is asymptotically positive.)
%
A computation gives
\begin{align}
\del_b \mathsf{T}_n(b)&=
\del_b(\tr D\R^nF)_{b,p_{1,n}(b)}
+\del_{x,y}(\tr D\R^nF)_{b,p_{1,n}(b)}\cdot \del_b p_{1,n}(b) \\
\del_b \mathsf{D}_n(b)&=
\del_b(\det D\R^nF)_{b,p_{1,n}(b)} 
+\del_{x,y}(\det D\R^nF)_{b,p_{1,n}(b)}\cdot \del_b p_{1,n}(b)
\end{align}

\begin{claim}
Let $F\in C^1([0,\bar{b}),\I^\omega_\Omega)$ be parametrised by the average Jacobian.
There exists a positive integer $N$, $\Bar{\Bar{b}}>0$ and $C_0>0$ such that the following holds:
For $b\in [0,\Bar{\Bar{b}})$ and $n>N$,
\begin{enumerate}
\item 
$\left|\del_b (\det D\R^nF)_{b,p_{1,n}(b)}\right|>C_0^{-1}b^{2^n-1} 2^n$
\item 
$\left\|\del_{x,y} (\det D\R^nF)_{b,p_{1,n}(b)}\right\|<C_0b^{2^n}$
\item 
$\left|\del_b (\tr D\R^n F)_{b,p_{1,n}(b)}\right|<(3/2)^n$
\item 
$\left\|\del_{x,y} (\tr D\R^n F)_{b,p_{1,n}(b)}\right\|<C_0$
\end{enumerate}
\end{claim}
\noindent
\emph{Proof of Claim:}
By the asymptotic formula
\begin{equation}
\R^nF_b(x,y)=\left(f_{b,n}(x)-a(x) b^{2^n} y E_n(x,y),x\right)
\end{equation}
where $E_n(x,y)=1+\bigo(\rho^n)$.
It follows that
\begin{align}
\det D\R^n F_b(x,y)
&=b^{2^n}\del_y\left(a(x)E_n(x,y)\right) \\ 
\tr D\R^n F_b(x,y)
&=f'_{b,n}(x)-b^{2^n}y\del_x\left(a(x)E_n(x,y)\right)
\end{align}
\begin{enumerate}
\item
Observe that $\left.\del_y(aE_n)\right|_{p_{1,n}}=a(x_n)\del_yE_n(x_n,y_n)=a(x_*)+\bigo(\rho^n)$.
Therefore
\begin{align}
\left|\del_b(\det D\R^n F)_{b,p_{1,n}}\right|
&=2^nb^{2^n-1}\left|a(x_n)\del_yE_n(x_n,y_n)\right| \\
&\geq 2^n b^{2^n-1}\left||a(x_*)|-C\rho^n\right|
\end{align}
for some positive constant $C$.
Applying Proposition~\ref{prop:a-nonzero}, the result follows.
\item
Since $a$ and $E_n$ are bounded and analytic in $\Omega$ it follows from the Cauchy estimate that $\left\|\del_{x,y}\left(\del_y (aE_n)\right)\right\|=\bigo(1)$.
Therefore there exists $C>0$ such that
\begin{align}
\left\|\del_{x,y} \left(\det D\R^n F\right)_{b,p_{1,n}}\right\|
&\leq Cb^{2^n}
\end{align}

\item
By a corollary to the Mean Value Theorem, 
if $b\in [0,\bar{b})$ is not an inflection point of $f_{n,b}'(x_n(b))$ then there exist $b_0, b_1\in [0,\bar{b})$ such that
\begin{align}
\left|\del_b f_{n,b}'(x_n(b))\right|
&= \left|f_{n,b_0}'\left(x_n(b_0)\right)-f_{n,b_1}'\left(x_n(b_1)\right)\right| \cdot \left|b_0-b_1\right|
\end{align}
Since $f_{n,b}'\left(x_n(b)\right)-f_*'\left(x_*\right)=O(\rho^n)$ and $|b_0-b_1|<|\bar{b}|$, 
there exists $C_0>0$ such that the above is bounded by $C_0 |\bar{b}| \rho^n$.
Therefore, since $\left|\del_x\left(aE_n\right)\right|=\left|a'E_n+a\del_x E_n\right|=\bigo(1)$ there exists $C_1>0$ such that
\begin{align}
\left|\del_b\left(\tr D\R^nF\right)_{b,p_{1,n}}\right|
&\leq C_0\left|\bar{b}\right|\rho^n+C_12^{n}b^{2^n-1}
\end{align}
However, $\rho<3/2$ and so for $n$ sufficiently large the result follows.

\item
Since $a$ and $E_n$ are bounded and analytic in $\Omega$ it follows that $\left\|\del_{x,y}\left(\del_x (aE_n)\right)\right\|=\bigo(1)$.
We also know that $\del_{x,y}f_{n,b}=\bigo(1)$.
Therefore there exists $C>0$ such that
\begin{align}
\left\|\del_{x,y} \left(\tr D\R^n F\right)_{b,p_{1,n}}\right\|
&\leq \left\|\del_{x,y}f_{n,b}\right\|+b^{2^n}\left\|\del_{x,y}\del_x(aE_n)\right\| \\
&\leq C
\end{align}
Hence the result follows./\!/
\end{enumerate}
\begin{claim}
$\left|\del_b p_{1,n}(b)\right|<C_1b^{2^n}$
\end{claim}
\noindent
\emph{Proof of Claim:}
Differentiating the fixed-point equation 
\begin{equation}
\R^nF_{b}(p_{1,n}(b))-p_{1,n}(b)=0
\end{equation}
gives
%The composition
%\begin{equation}
%b\stackrel{\Gamma}{\longmapsto} (b,p_{1,n}(b))
%\stackrel{G}{\longmapsto} \R^nF(b,p_{1,n}(b))-\pi_p(b,p_{1,n}(b))
%\end{equation}
%is constant, equal to zero, therefore
\begin{align}
\del_b \R^nF\left(b,p_{1,n}(b)\right)+\left[\del_{x,y} \R^nF(b,p_{1,n}(b))-\id\right]\del_bp_{1,n}(b)=0
\end{align}
Since $\left|f'_*(p_{1,*})\right|\neq 1$, convergence of renormalisation implies that, 
for $n$ sufficiently large, $D\R^nF_b(p_{1,n}(b))$ has eigenvalues bounded away from $1$. 
It follows that $D\R^nF_b(p_{1,n})-\id$ is invertible.
Therefore
\begin{align}
\del_bp_{1,n}(b)
=\left(\id-\del_{x,y} \R^nF_{b,p_{1,n}(b)}\right)^{-1}\del_b \R^nF_{b,p_{1,n}(b)}
\end{align}
As $\left(\id-\del_{x,y} \R^nF\right)=\left(\id-DF_*\right)\left(1+O\left(b^{2^{n}}\right)\right)$, 
we find that $\del_bp_{1,n}(b)=\bigo\left(\del_b \R^nF_{b,p_{1,n}(b)}\right)$. 
Moreover, as $p_{1,n}(b)$ is restricted to lie on the diagonal, it follows that $\del_bp_{1,n}(b)$ is a multiple of the diagonal vector $(1,1)$./\!/

Choose a positive integer $N$ such that for all $n>N$, 
(a) $b^{2^n}>\left|\lambda_n^+\right|$ (which is possible by equation~\eqref{eq:eigenvals-asymp}) and 
(b) if $C_0$ and $C_1$ denote the constants from the previous two claims then
\begin{equation}
C_0^{-1}2^{n-1}>2b\left((3/2)^n+C_0C_1b^{2^n}\right)
\end{equation}
It now follows that for $n$ sufficiently large and $b$ sufficiently small,
%evaluating at $(b,p_{1,n})$ which we therefore drop from our notation,
\begin{align}
\left|\del_b \mathsf{D}_n\right|
&\geq 
\bigl| \,\left|\del_b\left(\det D\R^n F\right)\right|-\left\|\del_{x,y}\left(\tr D\R^n F\right)\right\|\cdot\left|\del_bp_{1,n}\right| \,\bigr| \\
&>
\left|C_0^{-1}b^{2^n-1}2^n-C_0b^{2^n}\right| \\
&> 
C_0^{-1}b^{2^n-1}2^{n-1} \\
&> 
2b^{2^n}\left((3/2)^n+C_0C_1b^{2^n}\right) \\
&> 
\left|\lambda_n^+\right| \cdot \left(\left|\del_b\left(\tr D\R^nF\right)\right|+\left\|\del_{x,y}\left(\tr D\R^nF\right)\right\|\cdot\left|\del_bp_{1,n}\right|\right) \\
&\geq 
\left|\lambda_n^+\right| \cdot \left|\del_b \mathsf{T}_n\right|
\end{align}
It follows that $\del_b\mathsf{D}_n\neq \lambda_n^+\del_b\mathsf{T}_n$ for sufficiently large $n$.
Therefore $\del_n\lambda_n^+\neq 0$ for $n$ sufficiently large, and the Proposition is shown.
%Explicitly
%
%\begin{align}
%\del_b\beta(b)
%&=
%\left(\begin{array}{cc}
%1-\del_x\phi_{n,b} & -\del_y\phi_{n,b} \\
%-1 & 1
%\end{array}\right)^{-1}
%\left(\begin{array}{c}
%\del_b\phi_{n,b} \\ 0
%\end{array}\right) \\
%&=
%\frac{1}{1-\del_x\phi_{n,b}-\del_y\phi_{n,b}}
%\left(\begin{array}{cc}
%1 & \del_y\phi_{n,b} \\
%1 & 1-\del_x\phi_{n,b}
%\end{array}\right)
%\left(\begin{array}{c}
%\del_b\phi_{n,b} \\ 0
%\end{array}\right) \\
%&=
%\frac{\del_b\phi_{n,b}}{1-\del_x\phi_{n,b}-\del_y\phi_{n,b}}
%\left(\begin{array}{c}
%1 \\ 1
%\end{array}\right)
%\end{align}
%
%From this it follows that
%\begin{align}
%A_n(b)&=
%\del_b\del_x\phi_{n,b}
%+
%\frac{\del_b\phi_{n,b}}{1-\del_x\phi_{n,b}-\del_y\phi_{n,b}}
%(\del_x\del_x\phi_{n,b}+\del_y\del_x\phi_{n,b}) \\
%B_n(b)&=
%-\del_b\del_y\phi_{n,b}
%-
%\frac{\del_b\phi_{n,b}}{1-\del_x\phi_{n,b}-\del_y\phi_{n,b}}
%(\del_x\del_y\phi_{n,b}+\del_y\del_y\phi_{n,b})
%\end{align}
\end{proof}
A simple partition of unity argument, whose proof is left to the reader, gives the following.
\begin{lem}[Interpolation Lemma]\label{lem:interpolate}
Let $r\in \{3,4,\ldots,\infty\}$.
Let $\AAA$ be a non-trivial open interval containing the point $a^*$.
Let $F\in C^1(\AAA,\mathcal I^r)$.
For each positive integer $N$ there exist 
subintervals $\AAA_1, \AAA_2\subset \AAA$ containing $a^*$,
and $F'\in C^1(\AAA_1\times \AAA_2,\mathcal I^r)$ such that for all $a_1\in \AAA_1$, $a_2\in \AAA_2$,
\begin{itemize}
\item 
$T'_{0,N-1}(a_1,a_2)=T_{0,N-1}(a_1)$; $F'(a_1,a_2)|T'_{0,N-1}(a_1,a_2)=F(a_1)|T_{0,N-1}(a_1)$
\item 
$T'_{N,*}(a_1,a_2)=T_{N,*}(a_2)$; $F'(a_1,a_2)|T'_{N,*}(a_1,a_2)=F(a_2)|T_{N,*}(a_2)$
\item 
$F'(a^*,a^*)=F(a^*)$ in $[0,1]^2$
\end{itemize}
\end{lem}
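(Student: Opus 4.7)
The plan is a partition of unity gluing argument. The key structural observation is that by property (iii) of the topological boxing, $\overline{T_{N,*}(a^*)}$ is contained in $\bigcup_{|\word{w}{}|=N-1} D^{\word{w}{}}(a^*)$, whose closure is removed from $T_{0,N-1}(a^*)$; hence $\overline{T_{0,N-1}(a^*)}$ and $\overline{T_{N,*}(a^*)}$ are disjoint compact subsets of $[0,1]^2$, separated by the annular region $T_{N-1,N}(a^*)$.

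First I would establish continuity of the data: the depth-$N$ boxing is built from finitely many hyperbolic periodic saddles and finite arcs of their local invariant manifolds, all of which vary $C^1$-smoothly with $F$. Hence there is a subinterval $\AAA_0 \subset \AAA$ containing $a^*$ on which the sets $\overline{T_{0,N-1}(a)}$ and $\overline{T_{N,*}(a)}$ depend continuously in the Hausdorff topology. Using the disjointness at $a^*$, I would fix open neighbourhoods $V_0 \supset \overline{T_{0,N-1}(a^*)}$ and $V_1 \supset \overline{T_{N,*}(a^*)}$ with $\overline{V_0}\cap\overline{V_1}=\emptyset$, together with a $C^r$ bump function $\chi\colon [0,1]^2 \to [0,1]$ satisfying $\chi\equiv 0$ on $V_0$ and $\chi\equiv 1$ on $V_1$. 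Shrinking $\AAA_0$ to subintervals $\AAA_1,\AAA_2 \ni a^*$, continuity guarantees $\overline{T_{0,N-1}(a_1)} \subset V_0$ for $a_1\in\AAA_1$ and $\overline{T_{N,*}(a_2)} \subset V_1$ for $a_2\in\AAA_2$.

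I would then define $F'(a_1,a_2) = (1-\chi)F(a_1) + \chi F(a_2)$, which is $C^r$ in the spatial variables and $C^1$ in $(a_1,a_2)$. The three bulleted properties are immediate: on $T_{0,N-1}(a_1)$ the bump vanishes so $F'=F(a_1)$; on $T_{N,*}(a_2)$ it equals one so $F'=F(a_2)$; and at $(a^*,a^*)$ both summands agree with $F(a^*)$. For membership in $\I^r$, if $\AAA_1,\AAA_2$ are sufficiently small then $F'$ is a $C^r$-small perturbation of $F(a^*)$ and hence a diffeomorphism onto its image; its second coordinate remains exactly $x$ since this is preserved by the convex combination, so it has H\'enon-like form. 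Infinite renormalisability holds because $T_{N,*}(a_2)$ is forward invariant under $F(a_2)$ (depth-$N$ pieces permute under $F(a_2)$), so $\RH^N F'(a_1,a_2)=\RH^N F(a_2)\in\I^r$; meanwhile on $T_{0,N-1}(a_1)$ the identity $F'=F(a_1)$ reproduces the first $N$ levels of boxing data correctly.

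The main obstacle I anticipate is verifying non-negativity of the thickening $\epsilon'$ of $F'$ in the transition region where $\chi$ takes intermediate values, automatic only where $\chi\in\{0,1\}$. A clean resolution is to arrange $\partial_y\chi\equiv 0$ along $\{y=0\}$: writing the first coordinate of $F'$ as $g(x,y)$ and setting $\epsilon'(x,y)=g(x,0)-g(x,y)$, the spurious cross-term $(\chi(x,y)-\chi(x,0))(f_{a_1}(x)-f_{a_2}(x))$ is then $O(y^2)$, whereas the genuine non-negative contribution $(1-\chi)\epsilon_{a_1}(x,y)+\chi\epsilon_{a_2}(x,y)$ is $O(y)$ with non-negative leading coefficient. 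Further shrinking $\AAA_1,\AAA_2$ if needed secures $\epsilon'\geq 0$ throughout $[0,1]^2$, completing the verification that $F'(a_1,a_2)\in\I^r$.
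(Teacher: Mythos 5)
Your construction is the partition-of-unity gluing that the paper has in mind (the paper gives no details, calling it ``a simple partition of unity argument \ldots left to the reader''), and the part of your write-up devoted to preserving the H\'enon-like form is the right analysis of the one analytic subtlety: after arranging $\partial_y\chi\equiv 0$ on $\{y=0\}$, the cross term $(\chi(x,y)-\chi(x,0))(f_{a_1}(x)-f_{a_2}(x))$ is $\bigo(y^2)$ and is dominated, for $a_1,a_2$ close to $a^*$, by $(1-\chi)\e_{a_1}+\chi\e_{a_2}$; note that what makes the domination work is that $\partial_y\e=\det DF$ is bounded \emph{strictly} below by a positive constant (automatic for a H\'enon-like diffeomorphism), not merely that the leading coefficient is ``non-negative'' as you wrote.

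The gap is in the sentence ``the three bulleted properties are immediate'' together with the justification $\RH^N F'(a_1,a_2)=\RH^N F(a_2)$. The first halves of the two bullets are assertions about the topological boxing of the \emph{new} map $F'$, and that boxing is not read off locally: the depth-$N$ pieces of $F'$ are defined through $F'$'s own renormalisations and scope maps, whose coordinate changes are built from $F'$ on macroscopic regions (where $F'$ equals $F(a_1)$, or neither map), and their boundaries contain arcs of unstable manifolds of the period-$2^{N-2}$ orbit of $F'$ --- an orbit lying in the region where $F'=F(a_1)$, not $F(a_2)$. Hence $\RH^N F'=\RH^N F(a_2)$ is not valid as an identity of maps, and it cannot be used as stated to conclude either $F'\in\I^r$ or $T'_{N,*}(a_1,a_2)=T_{N,*}(a_2)$. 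What the argument needs instead is: (a) $N$-fold renormalisability is an open condition, so the glued map, a $C^r$-small perturbation of $F(a^*)$, is $N$ times renormalisable, and one must check that all data defining its boxing up to depth $N-1$ (periodic orbits of period at most $2^{N-2}$, the heteroclinic points $r^i$, the bounding arcs, together with the orbits generating them) lies in $\{\chi=0\}$, where $F'=F(a_1)$ --- this is the real content of the first bullet; (b) the depth-$N$ pieces of $F'$ and their entire forward orbits lie in $\{\chi=1\}$, where $F'=F(a_2)$ and the depth-$N$ pieces of $F(a_2)$ are forward invariant, so the deep return dynamics (hence infinite renormalisability, the renormalisation Cantor set and the average Jacobian) is inherited from $F(a_2)$ --- an inheritance-of-dynamics statement rather than an equality of renormalisations, after which the set equality in the second bullet still needs an argument (or a suitable reading of the statement). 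The paper glosses all of this as well, but since these equalities are essentially the whole content of the lemma, your proof should not present them as automatic consequences of the bump-function formula.
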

\begin{proof}[Proof of Theorem~\ref{thm:constr1}]
We will proceed by induction.
First, consider the case $d=2$.
Let $F_\mathrm{init}$ denote the one-parameter family from Proposition~\ref{prop:good-families}.
Then there exists a positive integer $N$ such that $\lambda_{m}^s$ varies regularly with  $b$ for all $m>N$.
Hence by Lemma~\ref{lem:het-tangency} there exists a parameter $b^*$ and integers $m_1$ and $n_1$ satisfying $N<m_1<n_1$ such that 
at $b=b^*$,
\begin{enumerate}
\item
$F_{\mathrm{init}}(b)$ possesses an $(m_1,n_1)$-heteroclinic tangency $q_1$,
\item
$\lambda_{m_1}^s(b)$ varies regularly with $b$.
\end{enumerate}
Choose an integer $N_1>n_1$ so that $[p_{m_1},q_1]^u(b^*)$ and $[q_1,p_{n_1}]^s(b^*)$ are disjoint from $T_{N_1,*}(b^*)$.
Then by Lemma~\ref{lem:interpolate} there exists a two-parameter family $F$ satisfying the properties (i)--(iii) and consequently property (iv).
This completes the case $d=2$.

Next, consider the case when $d\geq 3$.
Assume that there exists a $d$-parameter family $F$ satisfying the hypotheses of the theorem.
By hypothesis $F(b_1,\ldots,b_{d})|T_{N_{d-1},*}(b_1,\ldots,b_{d})$ depends only upon the parameter $b_{d}$ and in fact coincides with $F_\mathrm{init}(b_d)$.
Also by hypothesis $b(F(b_1,b_2,\ldots,b_{d}))=b_{d}$.
Once more Lemma~\ref{lem:het-tangency} implies there exists a parameter $b_{d}^*$ and integers $m_{d}$ and $n_{d}$ satisfying $N_{d-1}<m_{d}<n_{d}$ such that 
\begin{enumerate}
\item
$F(b_1,\ldots,b_{d-1},b_{d}^*)$ possesses an $(m_{d},n_{d})$-heteroclinic tangency $q_d$ for all $b_1,\ldots,b_{d-1}$
\item
$\lambda_{m_{d}}^s$ varies regularly with $b_{d}$ at $b_{d}=b_{d}^*$
\end{enumerate}
Choose an integer $N_{d}>n_{d}$ so that $[p_{m_i},q_i]^u$ and $[q_i,p_{n_i}]^s$ are disjoint from $T_{N_{d},*}$ for all $i$.
Then, as $F$ restricted to $T_{N_{d-2},*}$ is a one-parameter family, we can apply Lemma~\ref{lem:interpolate}.
This gives a $(d+1)$-parameter family which we denote by $F'(b_1,b_2,\ldots,b_d,b_{d+1})$ which satisfies 
\begin{equation}
F'(b_1,\ldots,b_{d+1})|T_{0,N_{d}-1}(b_1,\ldots,b_{d+1})
=F(b_1,\ldots,b_{d})|T_{0,N_{d}-1}(b_1,\ldots,b_{d})
\end{equation}
for all $b_1,\ldots,b_d,b_{d+1}$ on suitably restricted subintervals.
Hence properties (i)-(iii) and consequently (iv) are satisfied.
This completes the proof. 
\end{proof}

\subsection{Construction of the Second Family}\label{subsect:2ndconstr}
Next, given the resulting map, embedded in this family, with $d$ tangencies we construct a new family so that the support of each old parameter contains the support of the two new parameters.
One parameter changes the Palis invariant and the other controls a local perturbation in a neighbourhood of the corresponding point of tangency. 
Each local perturbation is chosen so that, in a neighbourhood of a point of tangency, each point on the unstable manifold moves transversely and at a controlled speed through the stable manifold as the corresponding parameter is varied.

\begin{thm}[Second Construction]\label{thm:constr2}
Let $d\geq 2$ be an integer.
Given a $d$-parameter family $F$ satisfying the hypotheses of Theorem~\ref{thm:constr1}, there exists 
an open neighbourhood $\UUU\subset \RR^{2d}$,
a family $G\in C^1(\UUU,\mathcal I^r)$ and 
a parameter $u^*\in \UUU$
such that
\begin{itemize}
\item 
$G_{u^*}=F_{b^*}$
\item 
Let $G_*=G_{u^*}$.
For $i=1,2,\ldots,d$ denote $p_{m_i}(u^*), p_{n_i}(u^*)$ and $q_i(u^*)$ by $p_{m_i}^*, p_{n_i}^*$ and $q_i^*$ respectively.
If, for $i=1,2,\ldots,d$, we define the map 
\begin{equation}
R_i=(P_{G_*,p_{m_i}^*,p_{n_i}^*},Q_{G_*,p_{m_i}^*,p_{n_i}^*,q_i^*})
\end{equation}
then $\uline{R}=(R_1,R_2,\ldots,R_d)$ is a local diffeomorphism at $u=u^*$.
\end{itemize}
\end{thm}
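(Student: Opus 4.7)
The plan is to adjoin to the $d$-parameter family $F(b_1,\ldots,b_d)$ of Theorem~\ref{thm:constr1} an additional $d$ parameters $c_1,\ldots,c_d$, each controlling a localised bump perturbation around the tangency $q_i^*=q_i(b^*)$. Setting $u=(b,c)$ and $u^*=(b^*,0)$, I would put
\begin{equation}
G_u = F_b + \sum_{i=1}^d c_i\,\phi_i,
\end{equation}
so that $G_{u^*}=F_{b^*}$, where each $\phi_i$ is a smooth $\RR^2$-valued bump supported in a small topological disk $W_i$ containing $q_i^*$. I would take the $W_i$ pairwise disjoint, each $W_i\subset T_{N_{i-1},N_i-1}(b^*)$, and, after shrinking further, disjoint from the orbits of $p_{m_j}(b^*)$ and $p_{n_j}(b^*)$ for every $j$, and from the arcs $[p_{m_j},q_j]^u\cup[q_j,p_{n_j}]^s$ for $j\neq i$. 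This is feasible because property (iv)(a) of Theorem~\ref{thm:constr1} places all index-$i$ data inside the shell $T_{N_{i-1},N_i-1}$, and the shells for distinct indices are pairwise disjoint by the ordering $N_0<m_1<n_1<N_1<\cdots<N_{d-1}$.

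In the horizontal $\beta_i$-chart around $q_i^*$ supplied by Remark~\ref{rmk:horiz-vert-coords} I would take
\begin{equation}
\phi_i(z) = \chi_i(z)\,x(z)\,e_x,
\end{equation}
where $\chi_i$ is a smooth bump equal to $1$ on a smaller neighbourhood of $q_i^*$ and supported in $W_i$, $x(z)$ is the horizontal $\beta_i$-coordinate, and $e_x$ is the horizontal unit vector. Since $\bar l_i\subset\{x=0\}$ in this chart, $\phi_i$ vanishes on $\bar l_i$; hence $G_u|_{\bar l_i}=F_b|_{\bar l_i}$, so $\bar l_i\subset W^s(p_{n_i};G_u)$ and $G_u\in\mathcal W_0$ for the marking at $q_i^*$. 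A first-order calculation tracking how the bump accumulates under forward iteration of $G_u$ along the unstable arc $[p_{m_i},q_i]^u$ should then give that the graph $\{x=\psi_{G_u}(y)\}$ of $\bar k_i$ in the $\beta_i$-chart is translated in the $x$-direction with non-zero speed as $c_i$ varies, so that $\partial_{c_i}Q_i(u^*)\neq 0$. For $c$ sufficiently small, $G_u$ remains in $\I^r$.

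To conclude I would check invertibility of the $2d\times 2d$ Jacobian $D\uline R(u^*)$ via its four $d\times d$ blocks, with rows indexed by $(P,Q)$ and columns by $(b,c)$. First, $\partial_c P\equiv 0$: since every $\phi_j$ is supported away from the orbits of $p_{m_i}$ and $p_{n_i}$, the linearisations of $G_u$ along these orbits, and hence the multipliers $\lambda^s_{m_i}$ and $\lambda^u_{n_i}$, are independent of the $c$-parameters. Second, $\partial_b P$ is diagonal with non-zero diagonal entries: property (i) of Theorem~\ref{thm:constr1} places the orbits of $p_{m_i},p_{n_i}$ in $T_{N_{i-1},N_i-1}$ on which $F$ depends only on $b_i$, so $P_i=P_i(b_i)$, while property (iv)(b) together with the near-universality of $\lambda^u_{n_i}$ coming from the asymptotic formula gives $\partial_{b_i}P_i\neq 0$. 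Third, $\partial_c Q$ is diagonal with non-zero entries: $\partial_{c_i}Q_i\neq 0$ by the construction above, and for $j\neq i$ the support of $\phi_j$ is disjoint from the geometric data $[p_{m_i},q_i]^u\cup[q_i,p_{n_i}]^s$ that enter the definition of $Q_i$, so $\partial_{c_j}Q_i=0$. The Jacobian is therefore block lower-triangular with invertible diagonal blocks, hence invertible, and the inverse function theorem supplies the local diffeomorphism.

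The main obstacle will be the explicit verification that $\partial_{c_i}Q_i(u^*)\neq 0$. The choice $\phi_i=\chi_i\,x\,e_x$ vanishes on $\bar l_i$, which handles the $\mathcal W_0$-constraint, and is non-zero along $\bar k_i$ away from $q_i^*$, which drives the translation of $\bar k_i$; however, the first-order shift of the critical value of $\psi_{G_u}$ could in principle cancel. Ruling out this cancellation will require unwinding the forward iteration of $G_u$ along the unstable arc and exploiting the hyperbolic expansion at $p_{m_i}$ together with the fact that $W_i$ meets $\bar k_i$ on an arc bounded away from $q_i^*$, which ensures that the cumulative translation of $\bar k_i$ is of a definite sign and magnitude proportional to $c_i$.
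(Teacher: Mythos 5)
Your $c_i$-perturbations do not move the invariant $Q_i$ at all, so the block $\partial_c Q$ of your Jacobian is identically zero rather than invertible diagonal. The local unstable arc $k_i$ near $q_i^*$ is determined by \emph{backward} orbits, and your bump $\phi_i$ is supported in a small disk $W_i\ni q_i^*$ disjoint from the saddle orbits: the backward orbit of $q_i^*$ leaves $W_i$ at the first step and converges to the orbit of $p_{m_i}$, so for $W_i$ small enough no strict backward iterate $F_{b^*}^{-j}(z)$, $j\geq 1$, of a point $z\in k_i$ meets $\supp\phi_i$. Consequently $G_u^{-1}(z)=F_{b^*}^{-1}(z)$ (the old preimage is untouched by the bump), inductively the whole backward orbit is unchanged, and $z\in W^u(p_{m_i};G_u)$; similarly $l_i$ stays in $W^s(p_{n_i};G_u)$ because $\phi_i$ vanishes on $\{x=0\}$ and the forward orbit of $l_i$ avoids $\supp\phi_i$ afterwards. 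So the germs of both $k_i$ and $l_i$ at $q_i^*$ are literally the same curves for all small $c_i$, hence $\psi_{G_u}=\psi_{F_{b^*}}$ near its critical point and $\partial_{c_i}Q_i(u^*)=0$. The mechanism you hope for in your last paragraph (accumulation of the bump along earlier passes of $[p_{m_i},q_i]^u$ through $W_i$) is not available: $[p_{m_i},q_i]^u$ is a compact embedded arc with endpoint $q_i^*$, so for $W_i$ small its intersection with $W_i$ is a single subarc ending at $q_i^*$, while enlarging $W_i$ to catch earlier passes wrecks the vanishing of $\phi_i$ along the forward orbit of $\bar l_i$ (membership in $\mathcal W_0$) and leaves the chart $\beta_i$. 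This is precisely why the paper places the second perturbation at the \emph{preimage} $q_i'=F_{b^*}^{-1}(q_i^*)$: it composes $F$ with a vertical translation $J_s$ in a vertical chart $\alpha$ there, so the (unchanged) local unstable piece through $q_i'$ is carried by the modified map to a curve through the $q_i$-region whose critical value moves with speed $\del_x\beta_x\cdot\del_y\phi_{b^*}\cdot\del_y\alpha_y^{-1}\neq 0$, the H\'enon-like Jacobian entering through $\del_y\phi_{b^*}$; see the envelope computation \eqref{eqn:d_s-phi}.

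There is a second, lesser gap in the $P$-block. You keep the original parameters $b_i$ and claim $\partial_{b_i}P_i\neq 0$ from (iv)(b) plus ``near-universality'' of $\lambda^u_{n_i}$. But on $T_{N_{i-1},N_i-1}$ the map depends on $b_i$ near \emph{both} saddle orbits, so $\partial_{b_i}P_i$ is a difference of two terms, and the second one carries the factor $\log|\lambda^s_{m_i}|\sim 2^{m_i}\log b$ times $\partial_{b_i}\lambda^u_{n_i}$; exponential convergence of $\lambda^u_{n_i}$ to its universal value says nothing about its $b$-derivative (the paper's own estimates only bound such derivatives by quantities growing like $(3/2)^n$), so cancellation is not ruled out as written. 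The paper avoids this entirely: via the Interpolation-Lemma-type gluing it introduces a parameter $t_i$ supported only in a neighbourhood $U$ of the orbit of $p_{m_i}$, disjoint from the stable arcs through $p_{n_i}$, so that $\lambda^u_{n_i}$ is literally constant in $t_i$ and \eqref{eq:dPnonzero} is immediate. To make your route work you would need either the same localisation of the first set of parameters or a genuine quantitative estimate excluding cancellation; with both repairs your block-triangular Jacobian argument would then match the structure of the paper's proof.
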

\begin{proof}
%Assume
%$F, p_{m_{1}}p_{n_{1}},q_{1},\ldots, p_{m_{d}}p_{n_{d}},q_{d}$ 
%is a good marking.
%Let $\hat G\in C^1(\hat\Delta,\mathcal I^\infty)$ denote a good family for this marking and $\hat{b}\in\hat\Delta$ the corresponding parameter, {\it i.e.}, the parameter which satisfies $\hat G_{\hat{b}}=F$.
%
Let $F\in C^1(\BBB,\I^r)$ be as in the hypotheses of Theorem~\ref{thm:constr1}.
The following points will allow us to simplify notation.
We will construct a family $G_{u}$, where $u=(t_{1},s_{1},\ldots,t_{d},s_{d})$, so that the $i$-th pair of parameters $(t_{i},s_{i})$ correspond to a pair of local perturbations of $F$ in $T_{N_{i-1},N_{i}-1}$. 
In particular $\uline{R}$ will be a local diffeomorphism at $u=u^*$ if $R_{i}(t_{i},s_{i})$ is a local diffeomorphism at $(t_{i},s_{i})=(t_i^*,s_i^*)$ for each $i$. 
With this in mind, we drop $i$ from our notation. 
Hence we may assume we have a one-parameter family of maps, which we denote by either $F_b$ or $F(b)$ depending upon whichever is more notationally convenient, on the pair of pants $T$ so that for a fixed parameter $b^*$ the map $F_*=F(b^*)$ has a single $(m,n)$-heteroclinic tangency $q^*$ between the saddles which we denote by $p_m^*$ and $p_n^*$, for $m<n$ satisfying the properties
\begin{itemize}
\item $p_m^*, p_n^*, q^*\in T$
\item $[p_m^*,q^*]^u, [q^*,p_n^*]^s\subset T$
\end{itemize}
We also denote the preimage under $F_*$ of an object with a prime. 
For example $q'^*=F_{*}^{-1}q^*$, $p_m'^*=F_{*}^{-1}(p_m^*)$, $q''^*=F_{*}^{-1}q'^*=F_{*}^{-2}q^*$, etc..

First consider $F(b)$ at $b=b^*$.
Let $W$ be an open neighbourhood of $q(b^*)$ not intersecting any periodic orbit of $F(b)$ for any $b$. 
Let $l,\bar{l},k,\bar{k}$ be the corresponding arcs given in subsection~\ref{subsect:Palis} for the saddles $p_m^*$ and $p_n^*$ and the tangency $q^*$. 
By Remark~\ref{rmk:horiz-vert-coords} there exists a horizontal change of coordinates $\beta\colon (W,q^*)\to (\RR^2,0)$ at $q^*$.
Then we take $Q_{F_*,p_{m}^*,p_{n}^*,q^*}$ to be the function constructed in Subsection~\ref{subsect:Palis} relative to this coordinate change $\beta$.

Let $V\subset W'$ be a neighbourhood of $q'^*$. 
This also does not intersect any periodic orbit. 
Let $l',\bar{l}',k,\bar{k}'$ denote the preimages under $F_*$ of $l,\bar{l}, k$ and $\bar{k}$ respectively.
Let $\alpha\colon (V,q'^*)\to (\RR^2,0)$ be a vertical change of coordinates.
Recall that this means $\alpha(l'\cap V)\subset \{y=-|x|^a\}$ for some $a\geq 2$, and $\alpha(k'\cap V)\subset \{y=0\}$.
See figure~\ref{fig:horiz-vert} for a schematic picture.

%We note that both $\alpha$ and $\beta$ exist for appropriately chosen neighbourhoods $V$ and $W$ due to $l$ being approximately vertical, $k$ being approximately horizontal and the tangency being admissible.

For $j=0,\ldots,2^{m}-1$,
let $U_j\subset T$ be a neighbourhood of $F_{*}^j(p_{m}^*)$ 
which intersects $\bigcup_{j}F_{*}^j[p_{m}^*, q^*]^u$ in a single arc, 
which is disjoint from $W$, $W'$ and $\bigcup_{j}F_{*}^j[q^*,p_{n}^*]^s$, and which does not contain $q'^*=F_{*}^{-1}(q^*)$. 
By shrinking if necessary we may assume the $U_j$ have pairwise disjoint closures.
Let $U_j^0\cc U_j$ also be a neighbourhood of $F_{*}^j(p_{m})$. 
Let $U^0=\bigcup U_j^0$ and $U=\bigcup U_j$.

We now begin with our sequence of perturbations as follows. 
By similar reasoning to the Interpolation Lemma~\ref{lem:interpolate} there exists a two-parameter family $F_{b,c}$ so that $F_{b,c}|U^0=F_b$ and $F_{b,c}|\complement U=F_{c}$. Set $F^t=F_{t,b^*}$, restricting $t$ if necessary so that the orbit of $p_m$ lies in $U^0$ for all $t$.

Next, take open rectangles $S^0\cc S\subset\alpha(V)$ which contain the origin.
Take a smooth isotopy $J\colon [-s_*,s_*]\times \alpha(V)\to \alpha(V)$, where $s_*$ is sufficiently small, with support in $S$ so that 
$J_0=\id$; 
$J_s(x,y)=(x,y+s)$, for all $s$ and $(x,y)\in S^0$; 
$J_s|\del S=\id$ for all $s$; 
and vertical lines are preserved. 
Define $F^s\colon V\to F(V)$ by
\begin{equation}
F^s=F\circ\alpha^{-1}\circ J_s\circ \alpha
\end{equation}
Since the map $\alpha^{-1}\circ J_s\circ \alpha$ preserves vertical lines, is smooth and close to the identity it follows that, by restricting $s$ to a subinterval if necessary, the map $F^s$ is the restriction of a H\'enon-like map to $V$. 

We now glue together these two perturbations as follows. Let $\rho_U, \rho_V\in C^\infty([0,1]^2,\RR)$ be bump functions\footnote{Abusing terminology slightly, if neighbourhoods $W'\cc W$ are disconnected, with exactly one component of $W'$ in each component of $W$, then by the bump function for the pair $W', W$ we mean the sum of the bump functions over the connected components.} so that $\rho_U|U'=1$, $\rho_U|\complement U=0$ and $\rho_V|V'=1$, $\rho_V|\complement V=0$. 
Define
\begin{equation}
G_{t,s}=\rho_U F^t +\rho_V F^s +(1-\rho_U)(1-\rho_V)F
\end{equation}
The same argument as in the Interpolation Lemma~\ref{lem:interpolate} implies, restricting parameters if necessary, that $G_{t,s}$ is H\'enon-like for each $t$ and $s$. 
Since $G^t$ and $G^s$ are both families containing $F_{b^*}$, by an affine reparametrisation of the parameters $t$ and $s$ we can assume that $G_{0,0}=F_{b^*}$. 
It remains to show that $R\circ G_{t,s}$ is a local diffeomorphism at $(t,s)=(0,0)$. 

First, note that $P_{F_*,p_m^*,p_n^*}(G_{t,s})$ is differentiable at $(t,s)=(0,0)$.
%This is due to $\lambda_m^s$ being differentiable and $\lambda_n^u$ is fixed. 
Similarly, $Q_{F_{*},p_m^*,p_n^*,q(b^*)}(G_{t,s})$ is differentiable at $(t,s)=(0,0)$.
Hence $DR_{(t,s)}$ is well-defined at $(t,s)=(0,0)$.
Moreover, $P_{F_{*},p_m^*,p_n^*}(G_{t,s})$ is \emph{independent of the parameter $s$}.
Consequently, 
\begin{align}\label{eq:detDR}
\mathrm{det} DR_{(0,0)}
&=\del_t P_{F_*,p_m^*,p_n^*}(G_{s,t}) \del_s Q_{F_{*},p_m^*,p_n^*,q^*}(G_{s,t})|_{(t,s)=(0,0)}
\end{align}
As $F$ is a good family, so that $\lambda_m^s$ varies regularly with $t$ at $t=0$, while $\lambda_n^u$ is independent of $t$, a calculation shows that
\begin{equation}\label{eq:dPnonzero}
\del_t P_{F_*,p_m^*,p_n^*}(G_{s,t})|_{(0,0)}=\left.\frac{\del_t \lambda_m^s}{\lambda_m^s\log \lambda_n^u}\right|_{(0,0)}\neq 0
\end{equation}
It remains to show the second factor in~\eqref{eq:detDR} is non-zero.
Set $t=0$ and fix a parameter $s$. 
The following is a simple but essential observation.
\begin{claim}
Let $l_s$, $\bar{l}_s$, $k_s$, $\bar{k}_s$ denote the corresponding pieces of invariant manifold for $G_{s,0}$ and let 
$l_s', \bar{l}_s', k_s', \bar{k}_s'$ denote their preimages under $G_{s,0}$.
Then $l_s'$ is altered but $l_s$ is unchanged by varying $s$.
Similarly, $k_s'$ is unchanged but $k_s$ is altered by varying $s$.
\end{claim}
In fact the most essential piece of information is that, for all $s$,
\begin{equation}
\alpha(V\cap l_s')=\{y=0\}, \qquad \alpha(V\cap k_s')=\{y=|x|^a-s\}
\end{equation}
Therefore, let $\gamma$ denote the parametrisation of $\alpha(V\cap l_s')$ given by $\gamma(x)=(x,0)$. 
Let 
\begin{equation}
\alpha(x,y)=(\alpha_x(x),\alpha_y(x,y)), \qquad \beta=(\beta_x(x,y),\beta_y(y))
\end{equation}
Abusing notation slightly we let 
\begin{equation}
\alpha^{-1}(x,y)=(\alpha_x^{-1}(x),\alpha_y^{-1}(x,y)), \qquad \beta^{-1}(x,y)=(\beta_x^{-1}(x,y),\beta_y^{-1}(y))
\end{equation}
Consider the image of $V\cap \{y=0\}$ under the map $\beta \circ G_{0,s} \circ \alpha^{-1}$.
Then
\begin{align}
\beta \circ G_{s,0}\circ \alpha^{-1}\circ \gamma(x)
%&=
%\beta \circ F^s\circ \alpha^{-1} (x,0) \\
%&= 
%\beta \circ F_{b^*} \circ \alpha ^{-1} \circ J_s \circ \alpha\circ \alpha^{-1} (x,0) \\
&=
\beta \circ F_{b^*} \circ \alpha^{-1} (x, s) \\
%&=
%\beta F (\alpha_x^{-1}(x),\alpha_y^{-1}(x,s)) \\
%&=
%\beta ( \phi(\alpha_x^{-1}(x),\alpha_y^{-1}(x,s)), \alpha_x^{-1}(x) ) \\
&=
(\beta_x(\phi_{b^*}(\alpha_x^{-1}(x),\alpha_y^{-1}(x,s)), \alpha_x^{-1}(x)), \beta_y (\alpha_x^{-1}(x)) )
\end{align}
\begin{figure}[htp]
\centering
\small
\psfrag{pn}{$p_n$}
\psfrag{pm}{$p_m$}
\psfrag{q}{$q$}
\psfrag{l}{$l$}
\psfrag{k}{$k$}
\psfrag{lbar}{$\bar{l}$}
\psfrag{kbar}{$\bar{k}$}
\psfrag{alpha}{$\alpha$}
\psfrag{beta}{$\beta$}
\psfrag{W}{$W$}
\psfrag{pn'}{$p_n'$}
\psfrag{pm'}{$p_m'$}
\psfrag{q'}{$q'$}
\psfrag{V}{$V$}
\psfrag{l'}{$l'$}
\psfrag{k'}{$k'$}
\psfrag{W'}{$W'$}
\psfrag{F}{$F$}
\includegraphics[width=1.0\textwidth]{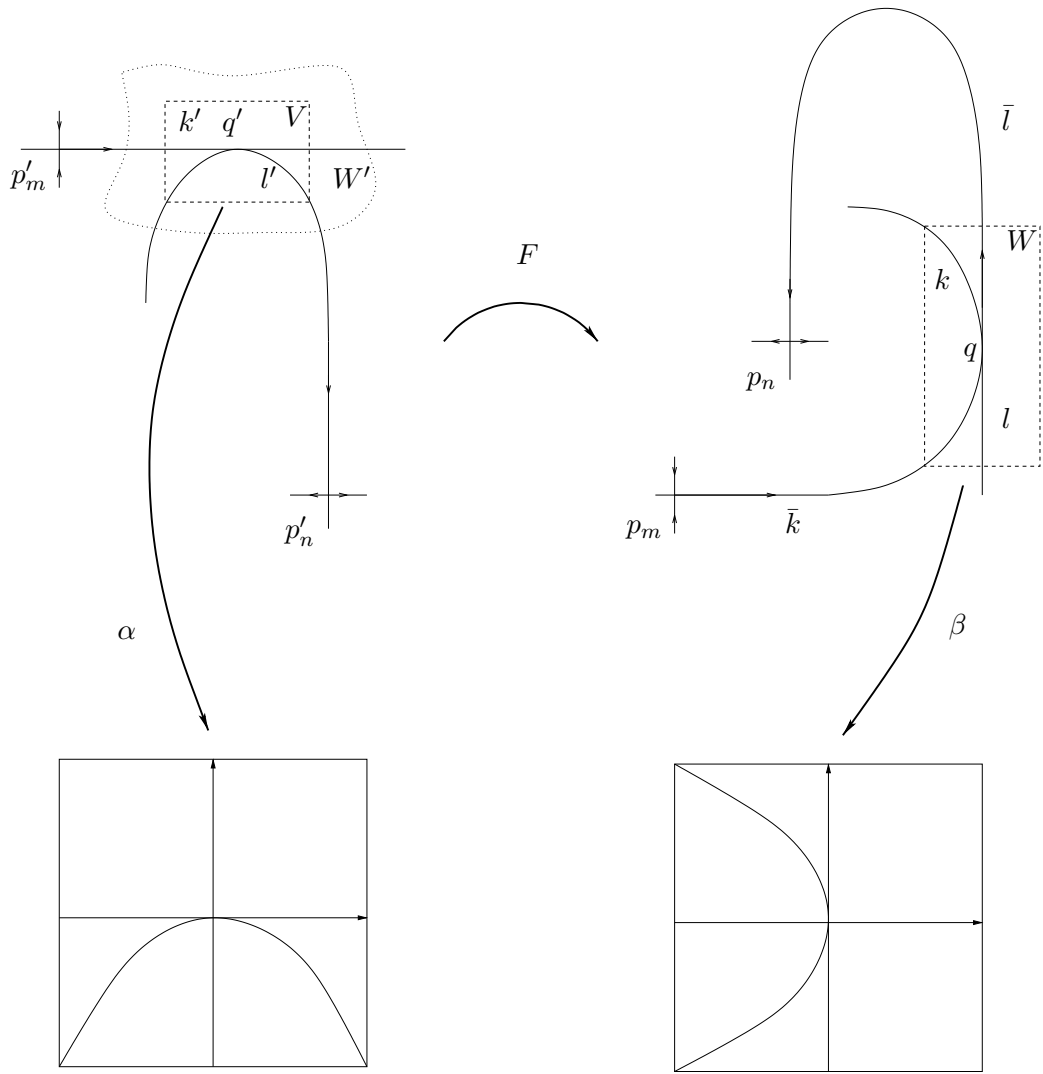}
\caption{The horizontal and vertical maps.}\label{fig:horiz-vert}
\end{figure}
Let $Y=\beta_y\circ \alpha_x^{-1}(x)$. 
Observe that this new coordinate $Y(x)$ varies regularly with $x$.  
The trace of the curve $\beta \circ G_{0,s}\circ \alpha^{-1}\circ\gamma$ coincides with that of
$\delta(Y)=(\psi_s(Y),Y)$
where
\begin{equation}
\psi_s(Y)=\beta_x(\phi_{b^*}(\beta_y^{-1}(Y),\alpha_y^{-1}(\alpha_x\beta_y^{-1}(Y),s)), \beta_y^{-1}(Y))
\end{equation}
If we let $M(Y,s)=(\beta_y^{-1}(Y),\alpha_y^{-1}(\alpha_x\beta_y^{-1}(Y),s))$ then this can then be rewritten in the form $\psi_s(Y)=\beta_x \circ F_{b^*} \circ M(Y,s)$.
Consequently 
\begin{equation}
Q_{F_*,p_m^*,p_n^*, q^*}(G_{0,s})=\psi_s(c(s))
\end{equation}
where $c(s)$ denotes the continuation of the critical point for the parameter $s=0$.
We wish to show that $\del_s (\psi_s(c(s)))\neq 0$ at $s=0$. 
Observe that by definition $\del_Y\psi |_{c(s),s}=0$.
Hence
\begin{equation}\label{eqn:d_s-phi}
\del_s(\psi_s(c(s)))|_{s=0}
=\del_s\psi|_{c(s),s}+\del_Y\psi|_{c(s),s}\del_s c|_s
=\del_s\psi|_{c(s),s}
\end{equation}
It therefore suffices to show $\del_s\psi|_{c(s),s}\neq 0$.
However, a computation shows that
\begin{align}
\del_s\psi|_{Y,s}
%&=
%(d\beta_{x})_{F_{b^*}M(Y,s)} (DF_{b^*})_{M(Y,s)} (DM)_{Y,s} \\
%&=
%(d\beta_{x})_{F_{b^*}M(Y,s)} (DF_{b^*})_{M(Y,s)}\del_s(\beta_y^{-1}(Y),\alpha_y^{-1}(\alpha_x\beta_y^{-1}(Y),s)) \\
%&=
%d\beta_{x, F_{b^*}M(Y,s)} DF_{M(Y,s)}(0,\del_y\alpha_y^{-1}|_{\alpha_x\beta_y^{-1}(Y),s}) \\
%&=
%d\beta_{x, F_{b^*}M(Y,s)} (\del_y\phi|_{M(Y,s)}\del_y\alpha_y^{-1}|_{\alpha_x\beta_y^{-1}(Y),s},0) \\
&=
\del_x\beta_x|_{F_{b^*}M(Y,s)}\del_y\phi_{b^*}|_{M(Y,s)}\del_y\alpha_y^{-1}|_{\alpha_x\beta_y^{-1}(Y),s}
\end{align}
Since $\alpha^{-1}$ is a diffeomorphism preserving vertical lines, $\del_y\alpha^{-1}_y\neq 0$. 
Similarly, as $\beta$ is a diffeomorphism preserving horizontal lines, $\del_x\beta_x\neq 0$. 
That $F_{b^*}$ is a diffeomorphism with Jacobian $-\del_y\phi_{b^*}\neq 0$ then implies that $\del_s\psi|_{Y,s}\neq 0$. 
By equation~\eqref{eqn:d_s-phi}, we find that $\del_s (\psi_s(c(s)))\neq 0$.
Therefore 
\begin{equation}\label{eq:dQnonzero}
\del_sQ_{F,p_m,p_n,q}(G_{s,t})|_{(0,0)}\neq 0
\end{equation}
as required. 
Equation~\eqref{eq:detDR} therefore implies, by inequalities~\eqref{eq:dPnonzero} and~\eqref{eq:dQnonzero}, that $\mathrm{det} [DR_{(0,0)}]\neq 0$ and hence $R$ is a local diffeomorphism at $(t,s)=(0,0)$. 
This completes the proof of the Proposition.
\end{proof}

\subsection{Construction of The Tangency Family}\label{subsect:tangfamily}
In this section we show, via the previous theorem, that \emph{tangency families} exist with arbitrarily many parameters.
For the definition of tangency families, see Appendix~\ref{sect:tang+full}
\newpage
\begin{cor}\label{cor:tangencyfamiliesexist}
For each integer $d\geq 1$ there exists 
\begin{itemize}
\item $\DDD\subset\RR^d$, an open neighbourhood of the origin, and $H\in C^1(\DDD,\I^r)$ which is a $d$-tangency family,
\item $\DDD'\subset\RR^d$, an open neighbourhood of the origin, and $H'\in C^1(\DDD',\H^\omega_\Omega)$ which is a $d$-tangency family. 
\end{itemize}
\end{cor}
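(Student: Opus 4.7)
The plan is to start with the $2d$-parameter family produced by Theorem~\ref{thm:constr2} and, via the implicit function theorem, cut it down to the $d$-dimensional submanifold on which all $d$ marked tangencies persist simultaneously.

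In detail, apply Theorem~\ref{thm:constr2} to obtain a family $G\in C^1(\UUU,\I^r)$ with $\UUU\subset\RR^{2d}$ and a point $u^*\in\UUU$, such that $\uline{R}=(R_1,\ldots,R_d)$, with $R_i=(P_i,Q_i)$, is a local diffeomorphism from a neighbourhood of $u^*$ onto its image in $\RR^{2d}$. The existence of the $d$ tangencies $q_i^*$ at $u=u^*$ is precisely the statement that each $Q_i(u^*)=\psi_i(c_i)=0$. Since $\uline{R}$ is a local diffeomorphism at $u^*$, the block of partial derivatives of $\uline R$ in the $Q$-directions is invertible, so the implicit function theorem produces a $d$-dimensional $C^1$-submanifold
\begin{equation}
\Sigma=\{u\in\UUU:Q_1(u)=\cdots=Q_d(u)=0\}
\end{equation}
through $u^*$, smoothly parametrised by the Palis invariants $(P_1,\ldots,P_d)$. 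On $\Sigma$ the vanishing of each $Q_i$ is exactly the persistence of a heteroclinic tangency between $W^u(p_{m_i})$ and $W^s(p_{n_i})$. Restricting $G$ to $\Sigma$ and reparametrising by an open neighbourhood $\DDD\subset\RR^d$ of the origin yields the first tangency family $H\in C^1(\DDD,\I^r)$.

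For the analytic item, we repeat the arguments of Theorems~\ref{thm:constr1} and~\ref{thm:constr2} in the analytic category. The initial one-parameter family is chosen in $\I^\omega_\Omega$ using the renormalisation theory of~\cite{dCLM1}, while the smooth bump functions $\rho_U,\rho_V$ used in the local perturbations of Theorem~\ref{thm:constr2} are replaced by analytic functions which are $C^2$-close to them on $[0,1]^2$ (such analytic approximations exist by standard density results). Since the non-degeneracy $\det DR_{(0,0)}\neq 0$ established in the proof of Theorem~\ref{thm:constr2} is an open $C^1$-condition, it survives the approximation, and the implicit function theorem argument above then produces a $C^1$-family $H'\in C^1(\DDD',\H^\omega_\Omega)$ of analytic H\'enon-like maps on which the $d$ tangencies persist. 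Note that infinite renormalisability need not be preserved under the analytic modification since the statement only requires $H'_u\in\H^\omega_\Omega$, not $\I^\omega_\Omega$.

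The principal obstacle is the analytic case, where genuine cutoff functions are unavailable and one cannot literally localise the perturbation. This is circumvented by the $C^2$-density of analytic maps in the smooth category on the compact square $[0,1]^2$, together with the fact that the transversality $\det DR_{(0,0)}\neq 0$ built into Theorem~\ref{thm:constr2} is robust under $C^1$-small perturbation of the family; hence the loss of strict localisation is harmless for the implicit function theorem step.
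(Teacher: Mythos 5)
Your treatment of the first item is essentially the paper's own argument: since $\uline{R}$ is a local diffeomorphism at $u^*$, the set $\{u:Q_1(u)=\cdots=Q_d(u)=0\}$ is, near $u^*$, a $d$-dimensional $C^1$-submanifold parametrised by the Palis invariants (the paper phrases this as taking $\uline{R}^{-1}$ of the linear subspace $\{Q_1=\cdots=Q_d=0\}$ and invoking the Inverse Function Theorem), and restricting $G$ to a chart of it gives $H\in C^1(\DDD,\I^r)$. That part is fine and matches the paper.

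The analytic item is where your proposal has a genuine gap. You propose to ``repeat the arguments of Theorems~\ref{thm:constr1} and~\ref{thm:constr2} in the analytic category'', replacing only the bump functions $\rho_U,\rho_V$ of the second construction by $C^2$-close analytic functions. First, these are not the only source of non-analyticity: the $d$-parameter family of Theorem~\ref{thm:constr1} is itself built with the Interpolation Lemma~\ref{lem:interpolate}, i.e.\ with smooth partitions of unity, so even with analytic $\rho_U,\rho_V$ the glued family $G_{t,s}=\rho_U F^t+\rho_V F^s+(1-\rho_U)(1-\rho_V)F$ is not analytic because $F$ is not. Second, and more seriously, the construction cannot be carried out analytically by approximating the cut-offs as you go: the induction in Theorem~\ref{thm:constr1} needs the family to coincide \emph{exactly} with the initial infinitely renormalisable family, parametrised by the average Jacobian, on the deep region $T_{N_{d-1},*}$, because Lemma~\ref{lem:het-tangency} (the $\boldkappa$/average-Jacobian intermediate-value argument that produces each $(m_i,n_i)$-tangency) applies only to one-parameter families in $\I^\omega_\Omega$ parametrised by $b$. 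An analytic approximation of a bump function destroys this exact coincidence, hence infinite renormalisability, the average Jacobian parametrisation, and with them the mechanism that creates the tangencies; and a heteroclinic tangency is not an open condition, so it cannot be recovered afterwards by ``robustness'' --- what is robust is only the non-degeneracy $\det DR\neq 0$ once the tangency has already been produced. The paper's proof respects this order: it completes the entire smooth $2d$-parameter construction first, then makes a \emph{single} analytic approximation of the finished family $G$ (Weierstrass approximation, yielding $G'\in C^\omega(\overline{\UUU'},\H^\omega_\Omega)$), using that $N_d$-fold renormalisability (so the relevant saddles and markings survive) and the property that $\uline{R}$ is a local diffeomorphism at $u=0$ are open; the preimage under $\uline{R}'$ of $\{Q_1=\cdots=Q_d=0\}$ then gives $H'$. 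Your closing remark that $H'$ need only lie in $\H^\omega_\Omega$, not $\I^\omega_\Omega$, is correct and consistent with the statement, but the openness you invoke rescues only this final step, not the intermediate stages where your version introduces the approximation.
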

\begin{proof}
Let $G$ denote the $2d$-parameter family of infinitely renormalisable H\'enon-like maps constructed in Theorem~\ref{thm:constr2}.
Let $\UUU'\subset \UUU$ be an open neighbourhood of the origin.
By the Weierstrass Approximation Theorem, for any $\epsilon>0$ there exists $G'\in C^\omega(\overline{\UUU'},\H^\omega_\Omega)$ such that $|G-G'|_{\overline{\UUU'}\times [0,1]^2}<\epsilon$.
As renormalisability is an open property, we can choose $\epsilon$ to be sufficiently small to ensure that $G'$ is $N_d$-times renormalisable for all parameters in $\UUU'$.

Since $\uline{R}$ is a local diffeomorphism at $u=u^*=0$ and being a local diffeomorphism is also a local property, we can assume $\epsilon$ is also small enough to ensure that $\uline{R}'$ is also a local diffeomorphism at $u=0$.
Let $\VVV$ and $\VVV'$ be open neighbourhoods of $0$ on which, respectively, $\uline{R}$ and $\uline{R}'$ are diffeomorphisms onto their images.
Let $\WWW$ and $\WWW'$ denote their respective images.

Endow $\RR^{2d}$ with the linear coordinates $P_1,Q_1,\ldots,P_d,Q_d$ and let $Q$ denote the $d$-dimensional linear subspace given by $\{Q_1=Q_2=\ldots,Q_d=0\}$.
Let
\begin{equation}
\mathcal Q=(\uline{R})^{-1}(\WWW\cap Q), \qquad
\mathcal Q'=(\uline{R}')^{-1}(\WWW\cap Q')
\end{equation}
By the Inverse Function Theorem, these are manifolds of dimension $d$ contained in $\VVV$ and $\VVV'$ respectively.
Observe that $\mathcal Q$ contains the origin.

Observe that $u^*\in\mathcal Q$ and moreover $\uline{R}(u^*)=0$.
Let  $u'^*\in\mathcal Q'$ satisfy $\uline{R}'(u'^*)=0$.
Let $\Phi\colon \mathcal U\to\RR^d$ be a chart of $\mathcal Q$ containing $u^*$ and let $\Phi'\colon \mathcal U'\to\RR^d$ be a chart of $\mathcal Q'$ containing $u'^*$. 
Assume they satisfy $\Phi(u^*)=0$ and $\Phi'(u'^*)=0$.
Let $\DDD$ and $\DDD'$ be balls contained in the respective images of these charts containing the origin.
Let $H=G\circ\Phi^{-1}|\DDD$ and $H'=G'\circ(\Phi')^{-1}|\DDD'$.
By construction, for each $i=1,2,\ldots,d$ we have $Q_i(H_{u})=0$ for any $u\in\DDD$ and $Q_i(H'_{u'})=0$ for any $u'\in\DDD'$.
Hence $H$ and $H'$ are $d$-parameter tangency families, as required.
\end{proof}
\noindent
We now prove Theorem~\ref{thm:InfPar4Henon}.
\begin{proof}[Proof of the Main Theorem]
Assume there exists a full family $F$ depending upon $d\geq 1$ parameters in either $\I^r$ for some $r$ or $\H^\omega_\Omega$.
By Corollary~\ref{cor:tangencyfamiliesexist}, for each $d\geq 1$ there exists a $d$-parameter tangency family $G$ in $\I^r$ or $\H^\omega_\Omega$ respectively. 
By Theorem~\ref{thm:tangencyimpliesnofull} the existence of a $d$-parameter tangency family contradicts the existence of a $d$-parameter full family.
Hence the Theorem follows.
\end{proof}

\begin{appendix}
\section{Tangency Families and Full Families}\label{sect:tang+full}
We show a general result for surface embeddings and diffeomorphisms which states that full families do not exist if families with persistent tangencies can be constructed.
\begin{defn}
Let $d\geq 1$ be an integer and let $\Delta\subset \RR^d$ be an arbitrary nonempty open set.
Let $M$ be an arbitrary compact manifold, possibly with boundary and
denote by $\mathcal E^r(M)$the set of orientation-preserving $C^r$-embeddings on $M$.
Let $\mathcal F\subset\mathcal E^r(M)$ be an arbitrary set.
Then $F\in C^0\left(\Delta, \mathcal E^r(M)\right)$ is a \emph{$d$-parameter full family in $\mathcal F$} if for each $f\in \mathcal F$ there exists a parameter $\uline{a}=\uline{a}(f)\in \Delta$ such that $f\sim F_{\uline{a}}$.
\end{defn}
\begin{defn}
Let $d\geq 1$ an integer.
Let $\Delta\subset \RR^d$ be an open neighbourhood which contains the origin.
Then $G\in C^1(\Delta,\mathcal F)$ is a \emph{$d$-parameter tangency family in $\mathcal F$} if
\begin{enumerate}
\item For each $i=1,\ldots,d$, $F=G_{0}$ has saddles $p_0^i$, $p_1^i$, and a heteroclinic tangency $q^i$ as in Section~\ref{subsect:Palis}, 
\item $G(\Delta)\subset \bigcap_{i=1}^d\mathcal Q_{F,p_0^1,p_1^1,q^1}$
\item $\left(P_{F,p_0^1,p_1^1}\times \ldots\times P_{F, p_0^d, p_1^d}\right)\circ G$ is a local diffeomorphism at the origin.
\end{enumerate}
\end{defn}

\begin{thm}\label{thm:tangencyimpliesnofull}
Let $d\geq 1$ be an integer and 
let $\Delta_d$ denote the unit ball in $\RR^d$.
Let $\mathcal F\subset \mathcal E^2\left([0,1]^2\right)$ be an arbitrary family of orientation-preserving diffeomorphisms.

If there exists $G_{\mathrm{tang}}\in C^1\left(\Delta_{d+1},\mathcal E^2_+\left([0,1]^2\right)\right)$, 
a $(d+1)$-parameter tangency family in $\mathcal F$ then there cannot exist $G_{\mathrm{full}}\in C^0 \left(\Delta_d,\mathcal E^2_+\left([0,1]^2\right)\right)$ which is a $d$-parameter full family in $\mathcal F$.
\end{thm}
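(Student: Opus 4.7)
The plan is to argue by contradiction, converting a purported $d$-parameter full family together with the $(d+1)$-parameter tangency family into a continuous map with incompatible dimensional behavior. Suppose both $G_{\mathrm{tang}}$ and $G_{\mathrm{full}}$ exist. By hypothesis, the joint Palis map
\[
\Phi \;=\; \bigl(P_{F,p_0^1,p_1^1}\times\cdots\times P_{F,p_0^{d+1},p_1^{d+1}}\bigr)\circ G_{\mathrm{tang}}\colon \Delta_{d+1}\longrightarrow\RR^{d+1}
\]
is a $C^1$ local diffeomorphism at the origin, sending some open neighborhood $V\subset\Delta_{d+1}$ of $0$ onto an open neighborhood $W\subset\RR^{d+1}$ of $\Phi(0)$. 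For each $v\in V$, the map $G_{\mathrm{tang}}(v)$ carries the $d+1$ marked heteroclinic tangencies, and by fullness of $G_{\mathrm{full}}$ there exists $a(v)\in\Delta_d$ with $G_{\mathrm{full}}(a(v))\sim G_{\mathrm{tang}}(v)$.

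The next step is to convert this data into a continuous map between Euclidean spaces of mismatched dimension. Topological conjugacy preserves the continuations of the marked saddles $p_0^i,\,p_1^i$, the heteroclinic tangencies $q^i$, and (by Theorem~\ref{thm:Palis}) each Palis invariant. Hence $a(v)$ lies in the set $\mathcal T_{\mathrm{full}}\subset\Delta_d$ of parameters for which all $d+1$ marked tangencies persist in the full family, and $\mathcal P(G_{\mathrm{full}}(a(v)))=\Phi(v)$. Because eigenvalues at hyperbolic saddles depend continuously on $C^2$-perturbations and $G_{\mathrm{full}}$ is $C^0$ in parameters, the composite $\Psi=\mathcal P\circ G_{\mathrm{full}}\colon \mathcal T_{\mathrm{full}}\to\RR^{d+1}$ is continuous, and by the preceding identity its image contains $W$. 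Composing with $\Phi^{-1}\colon W\to V$ yields a continuous surjection
\[
\Theta \;=\; \Phi^{-1}\circ\Psi\big|_{\Psi^{-1}(W)}\colon\;\Psi^{-1}(W)\longrightarrow V
\]
from a subset of $\Delta_d\subset\RR^d$ onto a nonempty open subset of $\RR^{d+1}$.

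The hard part is extracting a contradiction from $\Theta$. Merely having a continuous surjection $\RR^d\to W\subset\RR^{d+1}$ is not forbidden (space-filling curves exist), so pure topological dimension does not suffice. My approach is to reverse the direction and build a continuous \emph{injection} $V\hookrightarrow\Delta_d$, which is forbidden by invariance of domain: composing with the inclusion $\RR^d\hookrightarrow\RR^{d+1}$ as a hyperplane would yield an open subset of $\RR^{d+1}$ contained in a lower-dimensional affine subspace, impossible. The desired injection is a continuous right inverse of $\Theta$ on $V$; injectivity is automatic since $\Phi\circ\Theta=\Psi$ separates points with distinct Palis invariants. To produce the continuous section I would apply Michael's selection theorem to the set-valued map $v\mapsto\Theta^{-1}(v)$, whose lower semi-continuity follows from the openness of the hyperbolic condition on the marked saddles and the continuity of local invariant manifolds under $C^1$-small perturbations of the map (ensuring nearby tangencies can be tracked). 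An alternative, more robust route that I would fall back on if Michael's selection fails would be to approximate $G_{\mathrm{full}}$ by a $C^1$ family and apply Sard's theorem to $\mathcal P\circ\widetilde G$, whose image is contained in a $d$-dimensional subvariety of $\RR^{d+1}$ and hence cannot robustly cover $W$; the main technical obstacle here is propagating the failure of fullness from the approximation back to the original $C^0$-family.
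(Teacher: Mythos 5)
Your setup is the same as the paper's: use the conjugacies supplied by fullness together with the topological invariance of the Palis ratios to obtain the identity $P\circ G_{\mathrm{full}}(a(v))=\Phi(v)$, so that the continuous map $\Psi=P\circ G_{\mathrm{full}}$ on $\Delta_d$ has image containing the open set $W=\Phi(V)\subset\RR^{d+1}$; and your observation that a continuous surjection from a subset of $\RR^d$ onto an open subset of $\RR^{d+1}$ is not by itself absurd is well taken. The gap is in how you propose to upgrade this. Michael's selection theorem does not apply to the fibre map $v\mapsto\Theta^{-1}(v)$: that theorem requires nonempty closed \emph{convex} values (in a convex subset of a Banach space), whereas the fibres $\Psi^{-1}(\Phi(v))\subset\Delta_d$ are arbitrary closed sets with no convexity, and their lower semicontinuity is nowhere established -- nor is there any reason to expect it, since $G_{\mathrm{full}}$ is only $C^0$ in its parameter and nothing controls how the level sets of $\Psi$ sit inside $\Delta_d$. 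Continuous sections of continuous surjections need not exist, so your main route collapses exactly at its decisive step. The Sard fallback is, as you yourself note, also incomplete: the $C^1$-approximation of $G_{\mathrm{full}}$ is no longer full, so its Palis map need not cover $W$, and there is no mechanism for transferring the contradiction back to the original $C^0$ family.

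The paper avoids sections altogether. It chooses a closed $(d+1)$-dimensional ball $\Delta\subset P\circ G_{\mathrm{tang}}(\Delta_{d+1})$, sets $\Delta'=(P\circ G_{\mathrm{full}})^{-1}(\Delta)\subset\Delta_d$, which is compact, and uses the commuting diagram together with the injectivity of $P\circ G_{\mathrm{tang}}$ to conclude that $P\circ G_{\mathrm{full}}$ restricted to $\Delta'$ is injective; being a continuous injection of a compact set into a Hausdorff space, it is a homeomorphism onto its image, which contains $\Delta$. The inverse $f\colon\Delta\to\Delta'$ is then a closed continuous map from a $(d+1)$-dimensional ball into a subset of $\RR^d$ of dimension at most $d$, and the Hurewicz--Wallman theorem (Theorem VI 7 of \cite{HandW}) yields a point whose fibre under $f$ has dimension at least $\dim\Delta-\dim\Delta'\geq 1$, contradicting injectivity. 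So the ingredient you are missing is not a selection theorem but the injectivity of the Palis map on a compact preimage, after which classical dimension theory for closed maps (or, in your formulation, invariance of domain applied to the resulting continuous injection of a $(d+1)$-ball into $\RR^d$) finishes the argument as you intended.
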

\begin{proof}
We proceed by contradiction.
Suppose, to the contrary, there exists
\begin{equation}
G_{\mathrm{full}}\in C^0\left(\Delta_d,\mathcal E^2\left([0,1]^2\right)\right)
\end{equation}
which is a full family, for some positive integer $d$. 
Let
\begin{equation}
G_{\mathrm{tang}}\in C^1\left(\Delta_{d+1},\mathcal E^2\left([0,1]^2\right)\right)
\end{equation}
be a tangency family as defined above. 
Assume $F=G_0$ has saddles and tangencies $p_0^1,p_1^1,q^1,\ldots,p_0^{d+1},p_1^{d+1},q^{d+1}$ as above.
Denote the corresponding Palis invariant $P_{F,p_0^1,p_1^1}\times\ldots\times P_{F,p_0^{d+1},p_1^{d+1}}$ by $P$.
Without loss of generality, assume that $P\circ G_{\mathrm{tang}}$ is actually a diffeomorphism onto its image (otherwise restrict to a neighbourhood of the origin and rescale the parameter).

As $G_{\mathrm{full}}$ is full, for every $\uline{b}\in \Delta_{d+1}$ there exists $\uline{a}=\uline{a}(\uline{b})\in \Delta_d$ such that $G_{\mathrm{tang}}\left(\uline{b}\right)\sim G_{\mathrm{full}}\left(\uline{a}(\uline{b})\right)$. 
The tangency family consists of topologically inequivalent maps as they have distinct Palis invariants. 
Hence the map $\uline{a}\colon \Delta_{d+1}\to \Delta_{d}$ is injective.

Since the Palis invariant is a topological invariant,  $G_{\mathrm{tang}}\left(\uline{b}\right)\sim G_{\mathrm{full}}\left(\uline{a}(\uline{b})\right)$ implies $P\circ G_{\mathrm{tang}}\left(\uline{b}\right)=P\circ G_{\mathrm{full}}\left(\uline{a}(\uline{b})\right)$, {\it i.e.}, the following diagram commutes
\begin{equation}
\xymatrix{
\Delta_d  \ar[r]_{G_{\mathrm{full}}}           & \mathcal E^2([0,1]^2)/\sim \ar[d]^{P}  \\
\Delta_{d+1}\ar[u]_{\uline{a}}        \ar[ur]_{G_{\mathrm{tang}}} & \RR^{d+1}
}
\end{equation}
By hypothesis, $P\circ G_{\mathrm{tang}}$ is a diffeomorphism onto its image.
Hence the image $P\circ G_{\mathrm{tang}}(\Delta_{d+1})$ contains a closed $(d+1)$-dimensional ball $\Delta$.
Let $\Delta'=\left(P\circ G_{\mathrm{full}}\right)^{-1}(\Delta)$. 
Then $\Delta'$ is compact as it is closed and bounded.
Observe $\left.P\circ G_{\mathrm{full}}\right|\Delta'$ is injective as $P\circ G_{\mathrm{tang}}$ is injective and the above diagram commutes.
As $G_{\mathrm{full}}$ is continuous and $P\colon \Dom(P)\to\RR^{d+1}$ is also continuous it follows that $\left.P\circ G_{\mathrm{full}}\right|\Delta'$ is also continuous.
It then follows that $\left.P\circ G_{\mathrm{full}}\right|\Delta'$ is therefore a homeomorphism onto its image (a continuous, injective map from a compact space into a Hausdorff space is a homeomorphism onto its image).

Let $f\colon \Delta\to \Delta'$ denote the inverse of $P\circ  G_{\mathrm{full}}$. 
This is a closed map.
Moreover, as $\Delta$ is a $(d+1)$-dimensional ball, $\dim \Delta=d+1$ and since $\Delta'\subset \Delta_d$ we have $\dim \Delta'\leq d$.
Therefore~\cite[Theorem VI 7]{HandW} implies there exists a point $\uline{b}'\in \Delta'$ such that $\dim f^{-1}\left(\uline{b}'\right)\geq \dim \Delta -\dim \Delta'\geq 1$. 
Hence $f^{-1}(\uline{b}')$ must consist of more than one point, and thus $f$ cannot be a homeomorphism onto its image. 
This gives us the required contradiction.  
Therefore a full family $G_{\mathrm{full}}$ cannot exist.
\end{proof}
\end{appendix}

\end{document}